\documentclass[reqno]{amsart}
\usepackage{amsmath}
\usepackage{graphicx, epsfig}
\usepackage{amssymb}
\usepackage{amsfonts}

\setcounter{MaxMatrixCols}{10}

\newtheorem{theorem}{Theorem}
\theoremstyle{plain}

\newtheorem{corollary}{Corollary}

\newtheorem{remark}{Remark}


\begin{document}
\title[Short Title]{On the affine representations of the trefoil knot group}
\author[H.Hilden]{Hugh M. Hilden}
\address[H.Hilden]{ Department of Mathematics, University of Hawaii,
Honolulu, HI 96822, USA}
\author[M.T.Lozano]{ Mar\'{\i}a Teresa Lozano*}
\address[M.T.Lozano]{ IUMA, Departamento de Matem\'{a}ticas, Universidad de
Zaragoza, Zaragoza 50009, Spain }
\thanks{*This research was supported by grant MTM2007-67908-C02-01}
\author[J.M.Montesinos]{ Jos\'{e} Mar\'{\i}a Montesinos-Amilibia**}
\address[J.M.Montesinos]{Departamento de Geometr\'{\i}a y Topolog\'{\i}a,
Universidad Complutense, Madrid 28040, Spain}
\thanks{**This research was supported by grant MTM2006-00825}
\date{Mars, 2010}
\subjclass[2000]{Primary 57M25, 57M60; Secondary 20H15}
\keywords{quaternion algebra, representation, knot group, crystallographic group}
\dedicatory{}

\begin{abstract}
 The complete classification of representations of the Trefoil knot group $G$ in $S^{3}$ and $SL(2,\mathbb{R})$, their affine deformations, and some
geometric interpretations of the results, are given. Among other results, we also obtain
the classification up to conjugacy of the non cyclic groups of affine
Euclidean isometries generated by two isometries $\mu $ and $\nu $ such that
$\mu ^{2}=\nu ^{3}=1$ , in particular those which
are crystallographic. We also prove that
there are no affine crystallographic groups in the three dimensional
Minkowski space which are quotients of $G$.
\end{abstract}

\maketitle
\section{ Introduction}

The representation of a knot group in the group of isometries of a geometric
manifold is important in order to obtain invariants of the knot and also in
order to relate geometric structures with the knot.

In \cite{HLM2009} the varieties $V(\mathcal{I}_{G}^{c})$ and  $V(\mathcal{I}_{aG}^{c})$
 of c-representations and affine c-representations (resp.) of a
two-generator group in a quaternion algebra are defined. A c-representation is a representation where the image of the generators are conjugate elements. We gave there the
c-representation associated to each point in the varieties and also the
complete classification of c-representations of $G$ in $S^{3}$ and $SL(2,\mathbb{R})$. 

In this article we apply the results of \cite{HLM2009} to the group $G$ of
the Trefoil knot, giving the complete classification of representations of $G$ in $S^{3}$ and $SL(2,\mathbb{R})$. We classify their affine deformations and we give some
geometric interpretations of the results.
We also obtain as a consequence
the classification up to conjugacy of the non cyclic groups of affine
Euclidean isometries which are quotients of $G$, indeed those which are generated by two isometries $\mu $ and $\nu $ such that
$\mu ^{2}=\nu ^{3}=1$ (Theorem \ref{tposiblesH}), in particular those which
are crystallographic (Theorem \ref{tposibleseucli}). We also prove that
there are no affine crystallographic groups in the three dimensional
Minkowski space which are quotients of $G$, by using Mess's theorem (\cite{Mess1990} ,\cite{GM2000}) and
Margulis invariant (\cite{Mar1983}, \cite{Mar1984}).

This paper is organized as follows. In Sec. 2 we recall some concepts  contained in \cite{HLM2009}, about quaternion algebras, the varieties $V(\mathcal{I}_{G}^{c})$ and  $V(\mathcal{I}_{aG}^{c})$.
 of c-representations and affine c-representations of a group $G$ in a quaternion algebra. We include as Theorem \ref{teorema4} the result in \cite{HLM2009} giving explicitly the complete classification of c-representations of $G$ in $S^{3}$ and $SL(2,\mathbb{R})$. In Sec. 3 we  apply Theorem \ref{teorema4} to the Trefoil knot group $G(3_{1})$ and we describe the five occurring cases of representations  associated to the real points of the algebraic variety $V(\mathcal{I}_{G(3_{1})})$. We give also a geometric interpretation of the image of the  representation in each of these five cases as the holonomy of a 2-dimensional geometric cone-manifold.  The geometry of these cone-manifolds is spherical, Euclidean or hyperbolic. In Sec. 4 we obtain all the representations of the Trefoil knot group $G(3_{1})$ in the affine isometry group $A(H)$ of a quaternion algebra $H$. In particular we obtain the representations in the 3-dimensional affine Euclidean (and Lorentz) isometry group. Finally we study the Euclidean and Lorentz crystallographic groups which are images of representations of $G(3_{1})$ and we deduce some interesting consequences.

\section{Preliminaires}

\subsection{Quaternion algebras}

Recall that the quaternion algebra $H=\left( \frac{\mu ,\nu }{k}\right) $ is
the $k$-algebra on two generators $i,j$ with the defining relations:
\begin{equation*}
i^{2}=\mu ,\qquad j^{2}=\nu \qquad \text{and}\qquad ij=-ji.
\end{equation*}
Then $H$ also is a four dimensional vector space over $k$, with basis $\left\{ 1,i,j,ij\right\}. $Given a quaternion $A=\alpha +\beta i+\gamma
j+\delta ij$, $A\in H=\left( \frac{\mu ,\nu }{k}\right) =\langle 1,\quad
i,\quad j,\quad ij\rangle $, we use the notation $A^{+}=\alpha $, and $A^{-}=\beta i+\gamma j+\delta ij$. Then, $A=A^{+}+A^{-}$.

The \emph{conjugate} of $A$, is by definition, the quaternion $\overline{A}:=A^{+}-A^{-}=\alpha -\beta i-\gamma j-\delta ij.$

The \emph{trace} of $A$ is $T(A)=A+\overline{A}$. Therefore $T(A)=2A^{+}=2\alpha \in k.$

The \emph{norm} of $A$ is $N(A):=A\overline{A}=\overline{A}A$. Observe that
the norm can be considered as a quadratic form on $H:$
\begin{equation*}
N(A)=(\alpha +\beta i+\gamma j+\delta ij)(\alpha -\beta i-\gamma j-\delta
ij)=\alpha ^{2}-\beta ^{2}\mu -\gamma ^{2}\nu +\delta ^{2}\mu \nu \in k.
\end{equation*}

We denote by $(H,N)$ the quadratic structure in $H$ defined by the norm.

In the quaternion algebra $M(2,k\mathbb{)=}\left( \frac{-1,1}{k}\right) $
the trace is the usual trace of the matrix, and the norm is the determinant
of the matrix.

There are two important subsets in the quaternion algebra $H=\left( \frac{\mu ,\nu }{k}\right) $: the pure quaternions $H_{0}=\left\{ A\in
H:A^{+}=0\right\} $ (a 3-dimensional vector space over $k$ generated by $\left\{ i,j,ij\right\} $), and the unit quaternions $U_{1}$ (the
multiplicative group of quaternions with norm 1). The norm induces a
quadratic structure on the pure quaternions $(H_{0},N).$

There exists a homomorphism $c:U_{1}\longrightarrow SO(H_{0},N)$ such that $c(A)$ acts on $H_{0}$ by conjugation: $c(A)(B^{-})=AB^{-}\overline{A}$. This
homomorphism permits us to associate to each representation $\rho
:G\longrightarrow U_{1}$ a linear isometry $c\circ \rho $ of the metric
space $(H_{0},N).$

The \emph{equiform group} or \emph{group of similarities} $\mathcal{E}q(H)$
of a quaternion algebra $H$, is the semidirect product $H_{0}\rtimes U$,
where $U$ is the multiplicative group of invertible elements in $H.$ This is
the group whose underlying space is $H_{0}\times U$ and the product is
\begin{equation*}
\begin{array}{lll}
\mathcal{E}q(H)\times \mathcal{E}q(H) & \longrightarrow & \mathcal{E}q(H) \\
((v,A),(w,B)) & \rightarrow & (v+c(A)(w),AB)
\end{array}
\end{equation*}

The group of \emph{affine isometries} $\mathcal{A}(H)$ of a quaternion
algebra is the subgroup of $\mathcal{E}q(H)\ $which is the semidirect
product $H_{0}\rtimes U_{1}$.

The group $\mathcal{A}(H)$ defines a left action on the 3-dimensional vector
space $H_{0}$,
\begin{equation*}
\begin{array}{llll}
\Psi : & \mathcal{A}(H_{0})\times H_{0} & \longrightarrow & H_{0} \\
& ((v,A),u) & \rightarrow & (v,A)u:=v+c(A)(u)%
\end{array}%
\end{equation*}

For an element $(v,A)\in \mathcal{E}q(H)$, $A$ is the \emph{linear part} of $(v,A)$, $N(A)$ is the \emph{homothetic factor}, and $v$ is the \emph{translational part}. For an element $(v,A)\in \mathcal{A}(H)$, the vector $v\in H_{0}$ can be decomposed in a unique way as the orthogonal sum of two
vectors, one of them in the $A^{-}$ direction:
\begin{equation*}
v=sA^{-}+v^{\perp },\quad \qquad \left\langle v^{\perp },A^{-}\right\rangle
=0
\end{equation*}
Then
\begin{equation*}
(v,A)=(v^{\perp },1)(sA^{-},A)
\end{equation*}
The element $(v^{\perp },1)$ is a translation in $H_{0}$. The restriction of
the action of $(sA^{-},A)$ on the line generated by $A^{-}$ is a translation
with vector $sA^{-}$. We define $sA^{-}$ as the \emph{vector shift of the
element }$(v,A)$. The length $\sigma $ of the vector shift will be called
the \emph{shift of the element }$(A,v).$

The action of $(v,A)\ $leaves (globally) invariant an affine line parallel
to $A^{-}$ and its action on this line is a translation with vector the
shift $sA^{-}$. This invariant affine line will be call \emph{the axis of} $(v,A)$ . Then the action of $(v,A)$ on the axis of $(v,A)$ is a translation
by $\sigma $.

The following Table contains three important examples, which are the only
quaternion algebras over $\mathbb{R}$ and $\mathbb{C}$ up to isomorphism.

\begin{tabular}{|c|c|c|c|}
\hline
$H=\left( \frac{\mu ,\nu }{k}\right) $ & $U_{1}$ & $H_{0}$ & $(H_{0},N)$ \\
\hline \hline
$M(2,\mathbb{C})=\left( \frac{-1,1}{\mathbb{C}}\right) $ & $SL(2,\mathbb{C)}$
& $\mathbb{C}^{3}$ & $\mathbb{C}^{3}$ Complex Euclidean space \\ \hline
$M(2,\mathbb{R})=\left( \frac{-1,1}{\mathbb{R}}\right) $ & $SL(2,\mathbb{R)}$
& $\mathbb{R}^{3}$ & $E^{2,1}$ Minkowski space \\ \hline
$\mathbb{H}=\left( \frac{-1,-1}{\mathbb{R}}\right) $ & $S^{3}=SU(2,\mathbb{R})$ & $\mathbb{R}^{3}$ & $E^{3}$ (Real) Euclidean space \\ \hline
\end{tabular}

In fact, $M(2,\mathbb{R})$ and the Hamilton quaternions $\mathbb{H}=\left(
\frac{-1,-1}{\mathbb{R}}\right) $ are each isomorphic to an $\mathbb{R}$-subalgebra of $M(2,\mathbb{C}).$

\subsection{The varieties $V(\mathcal{I}_{G}^{c})$ and $V(\mathcal{I}_{aG}^{c})$}

Let $G$ be a group given by the presentation
\begin{equation*}
G=\left\vert a,b:w(a,b)\right\vert
\end{equation*}
where $w$ is a word in $a$ and $b$. A homomorphism
\begin{equation*}
\rho :G\longrightarrow U_{1}
\end{equation*}%
such that $\rho (a)=A$ and $\rho (b)=B$ are conjugate elements in $U_{1}$ is
called here a \textit{c-representation}. Then, since $A$ and $B$ are
conjugate quaternions, $A^{+}=B^{+}$. Set
\begin{equation*}
x=A^{+}=B^{+}\quad \text{and \quad }y=-(A^{-}B^{-})^{+}.
\end{equation*}
We say that $\rho :G\longrightarrow U_{1}$ realizes $(x,y).$ It is proven in
\cite{HLM2009} that if $U_{1}$ is the group of unit quaternions in $M(2,\mathbb{C})=\left( \frac{-1,1}{\mathbb{C}}\right) ,$ then there is an
algorithm to construct an ideal $\mathcal{I}_{G}^{c}$ generated by four
polynomials
\begin{equation*}
\left\{ p_{1}(x,y),p_{2}(x,y),p_{3}(x,y),p_{4}(x,y)\right\}
\end{equation*}
with integer coefficients defining the \emph{algebraic variety of
c-representations} of $G$: $V(\mathcal{I}_{G}^{c})$. It is characterized as
follows:
\begin{enumerate}
\item [-]The set of points $\{(x,y)\in V(\mathcal{I}_{G}^{c}):y^{2}\neq
(1-x^{2})^{2}\}$ coincides with the pairs $(x,y)$ for which there exists an
irreducible c-representation $\rho :G\longrightarrow U_{1}$, unique up to
conjugation in $U_{1}$, realizing $(x,y)$.

\item [-]The set of points $\{(x,y)\in V(\mathcal{I}_{G}^{c}):y^{2}=(1-x^{2})^{2},\quad x^{2}\neq 1\}$ coincides with the pairs $(x,y)$ for which there exists an almost- irreducible c-representation $\rho
:G\longrightarrow U_{1}$, unique up to conjugation in $U_{1}$, realizing $(x,y)$.

\item [-]The set of points $\{(x,y)\in V(\mathcal{I}_{G}^{c}):y=0,\quad
x^{2}=1\}$ coincides with the pairs $(x,y)$ for which neither irreducible
nor almost-irreducible c-representations $\rho :G\longrightarrow U_{1}$
realizing $(x,y)$ exist.
\end{enumerate}

The real points in $V(\mathcal{I}_{G}^{c})$ , excepting the cases $\{(x,\pm
(1-x^{2}))|x^{2}\leq 1\}$, correspond to irreducible c-representations in $S^{3}$ and irreducible (or almost-irreducible) c-representations in $SL(2,\mathbb{R})$ according to \cite[Th.4]{HLM2009}:

\begin{theorem}[\cite{HLM2009}]
\label{teorema4}Let $G$ be a group given by the presentation%
\begin{equation*}
G=\left\vert a,b:w(a,b)\right\vert
\end{equation*}
where $w$ is a word in $a$ and $b$. If $(x_{0}$, $y_{0})$ is a real point of
the algebraic variety $V(\mathcal{I}_{G}^{c})$ we distinguish two cases:

\begin{enumerate}
\item If%
\begin{equation*}
1-x_{0}^{2}>0,\quad (1-x_{0}^{2})^{2}>y_{0}^{2}
\end{equation*}
there exists an irreducible c-representation $\rho :G\longrightarrow U_{1}$,
$U_{1}=S^{3}\subset \mathbb{H}$,~unique up to conjugation in $U_{1}=S^{3}$,
realizing $(x_{0},y_{0})$. Namely:
\begin{equation*}
\fbox{$
\begin{array}{l}
A=x_{0}+\frac{1}{+\sqrt{1-x_{0}^{2}}}\left( y_{0}i+\sqrt{(1-x_{0}^{2})^{2}-y_{0}^{2}}j\right)  \\
B=x_{0}+\sqrt{1-x_{0}^{2}}i,\quad \sqrt{1-x_{0}^{2}}>0
\end{array}
$}
\end{equation*}
\begin{equation*}
(A^{-}B^{-})^{-}=-\sqrt{(1-x^{2})^{2}-y^{2}}ij
\end{equation*}

\item The remaining cases. Then excepting the case
\begin{equation*}
1-x_{0}^{2}>0,\quad (1-x_{0}^{2})^{2}=y_{0}^{2}
\end{equation*}
and the case
\begin{equation*}
x_{0}^{2}=1,\quad y_{0}=0
\end{equation*}
there exists an irreducible (or almost-irreducible) c-representation $\rho
:G\longrightarrow U_{1}$, $U_{1}=SL(2,\mathbb{R})\subset \left( \frac{-1,1}{\mathbb{R}}\right) $ realizing $(x_{0},y_{0})$. Moreover two such
homomorphisms are equal up to conjugation in the group of quaternions with
norm $\pm 1$, $U_{\pm 1}$. Specifically:

\item[(2.1)] If
\begin{equation*}
1-x_{0}^{2}>0,\quad (1-x_{0}^{2})^{2}<y_{0}^{2}
\end{equation*}
set
\begin{equation*}
\fbox{$
\begin{array}{c}
A=x_{0}+\sqrt{1-x_{0}^{2}}I,\quad \sqrt{1-x_{0}^{2}}>0 \\
B=x_{0}+\frac{1}{+\sqrt{1-x_{0}^{2}}}\left( y_{0}I+\sqrt{
y_{0}^{2}-(1-x_{0}^{2})^{2}}J\right)
\end{array}
$}
\end{equation*}
Then $\rho :G\longrightarrow U_{1}$ is irreducible, and
\begin{equation*}
(A^{-}B^{-})^{-}=+\sqrt{y_{0}^{2}-(1-x_{0}^{2})^{2}}IJ
\end{equation*}

\item[(2.2)] If
\begin{equation*}
1-x_{0}^{2}<0,\quad (1-x_{0}^{2})^{2}<y_{0}^{2}
\end{equation*}
there are two subcases:

\begin{enumerate}
\item[(2.2.1)] $y_{0}<0$. Set
\begin{equation*}
\fbox{$
\begin{array}{c}
A=x_{0}+\sqrt{x_{0}^{2}-1}J,\quad \sqrt{x_{0}^{2}-1}>0 \\
B=x_{0}-\frac{1}{+\sqrt{x_{0}^{2}-1}}\left( \sqrt{y_{0}^{2}-(x_{0}^{2}-1)^{2}
}I+y_{0}J\right)
\end{array}
$}
\end{equation*}
\begin{equation*}
(A^{-}B^{-})^{-}=-\sqrt{y_{0}^{2}-(x_{0}^{2}-1)^{2}}IJ
\end{equation*}

\item[(2.2.2)] $y_{0}>0$. Set
\begin{equation*}
\fbox{$
\begin{array}{c}
A=x_{0}+\sqrt{x_{0}^{2}-1}J,\quad \sqrt{x_{0}^{2}-1}>0 \\
B=x_{0}+\frac{1}{+\sqrt{x_{0}^{2}-1}}\left( \sqrt{y_{0}^{2}-(x_{0}^{2}-1)^{2}
}I-y_{0}J\right)
\end{array}
$}
\end{equation*}
\begin{equation*}
(A^{-}B^{-})^{-}=+\sqrt{y_{0}^{2}-(x_{0}^{2}-1)^{2}}IJ
\end{equation*}
In both subcases $\rho :G\longrightarrow U_{1}$ is irreducible.
\end{enumerate}

\item[(2.3)] If
\begin{equation*}
1-x_{0}^{2}<0,\quad (1-x_{0}^{2})^{2}>y_{0}^{2}
\end{equation*}
set
\begin{equation*}
\fbox{$
\begin{array}{c}
A=x_{0}+\sqrt{x_{0}^{2}-1}J,\quad \sqrt{x_{0}^{2}-1}>0 \\
B=x_{0}+\frac{1}{+\sqrt{x_{0}^{2}-1}}\left( -y_{0}J+\sqrt{
(x_{0}^{2}-1)^{2}-y_{0}^{2}}IJ\right)
\end{array}
$}
\end{equation*}
Then $\rho :G\longrightarrow U_{1}$ is irreducible and
\begin{equation*}
(A^{-}B^{-})^{-}=-\sqrt{(x_{0}^{2}-1)^{2}-y_{0}^{2}}IJ
\end{equation*}

\item[(2.4)] If
\begin{equation*}
1-x_{0}^{2}<0,\quad (1-x_{0}^{2})^{2}=y_{0}^{2}
\end{equation*}
set%
\begin{equation*}
\fbox{$
\begin{array}{c}
A=x_{0}+\sqrt{x_{0}^{2}-1}J,\quad \sqrt{x_{0}^{2}-1}>0 \\
B=x_{0}+(IJ+I)-\frac{_{y_{0}}}{\sqrt{x_{0}^{2}-1}}J
\end{array}
$}
\end{equation*}
Then $\rho :G\longrightarrow U_{1}$ is almost-irreducible, and
\begin{equation*}
(A^{-}B^{-})^{-}=-\sqrt{(x_{0}{}^{2}-1)}(I+IJ).
\end{equation*}

\item[(2.5)] If
\begin{equation*}
1-x_{0}^{2}=0,\quad y_{0}\neq 0
\end{equation*}
set
\begin{equation*}
\fbox{$
\begin{array}{c}
A=x_{0}+I+J \\
B=x_{0}+\frac{y_{0}}{2}(I-J)
\end{array}
$}
\end{equation*}
Then $\rho :G\longrightarrow U_{1}$ is irreducible, and
\begin{equation*}
(A^{-}B^{-})^{-}=-y_{0}IJ
\end{equation*}
\end{enumerate}
\qed
\end{theorem}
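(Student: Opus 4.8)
The plan is to reduce the statement to a single question about realizing a prescribed $2\times 2$ Gram matrix by a pair of vectors in the quadratic space $(H_0,N)$ attached to the relevant quaternion algebra, and then to read off the ambient group ($S^3$ or $SL(2,\mathbb{R})$) from the signature. First I would translate the data: if $\rho(a)=A$ and $\rho(b)=B$ are conjugate unit quaternions realizing $(x_0,y_0)$, then writing $A=x_0+A^-$ and $B=x_0+B^-$, the unit-norm conditions $N(A)=N(B)=1$ become $N(A^-)=N(B^-)=1-x_0^2$, while the relation $y_0=-(A^-B^-)^+$ becomes $\langle A^-,B^-\rangle=y_0$ for the symmetric bilinear form polarizing $N$ on $H_0$ (a short computation gives $(uv)^+=-\langle u,v\rangle$ and $\langle u,u\rangle=N(u)$ for pure $u,v$). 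Thus realizing $(x_0,y_0)$ is exactly the problem of producing $A^-,B^-\in H_0$ with Gram matrix
\[
\begin{pmatrix} 1-x_0^2 & y_0 \\ y_0 & 1-x_0^2 \end{pmatrix},
\qquad \det = (1-x_0^2)^2 - y_0^2 .
\]
The homomorphism property $w(A,B)=1$ is then automatic: by the construction of the ideal $\mathcal{I}_G^c$, a pair of conjugate unit quaternions with invariants $(x_0,y_0)$ satisfies the relation precisely when $(x_0,y_0)\in V(\mathcal{I}_G^c)$, which is the hypothesis.

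Next I would handle uniqueness and the irreducibility dichotomy. Two pairs with the same Gram matrix are carried to one another by an isometry of $(H_0,N)$ by Witt's extension theorem, and such isometries are realized by conjugation through $c:U_1\to SO(H_0,N)$; in the indefinite case one must pass to the norm-$\pm1$ group $U_{\pm1}$ in order to reach both components of the isometry group, which is exactly the qualification appearing in the statement. The rank of the Gram matrix governs the rest: when $\det=(1-x_0^2)^2-y_0^2\neq0$ the vectors $A^-,B^-$ span a nondegenerate plane and $\rho$ is irreducible, whereas $\det=0$ forces a degenerate/parallel configuration and $\rho$ is at best almost-irreducible; the two excluded cases, $(1-x_0^2)^2=y_0^2$ with $1-x_0^2>0$ and $x_0^2=1,\ y_0=0$, are precisely where no nondegenerate realization of the required type survives.

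The core labor is the existence, i.e. exhibiting the vectors in the correct space. For $\mathbb{H}$ the form on $H_0$ is the positive definite $E^3$, so a realization exists only when the Gram matrix is positive semidefinite, namely $1-x_0^2>0$ and $(1-x_0^2)^2\ge y_0^2$; this is Case 1, and the boxed $A,B$ amount to an orthogonalization (put $B^-$ along $i$ and split $A^-$ into its $i$- and $j$-components), after which $N(A)=N(B)=1$ and $\langle A^-,B^-\rangle=y_0$ follow from the computation already indicated. For $M(2,\mathbb{R})$ the form is the Lorentzian $E^{2,1}$, where vectors of every norm-sign exist, and the remaining cases 2.1--2.5 are exactly the possible signatures of the plane spanned by $A^-,B^-$: spacelike generators in a Lorentzian plane (2.1), timelike generators in a Lorentzian plane, split by $\operatorname{sign}(y_0)$ into the two time-cone configurations (2.2.1, 2.2.2), timelike generators spanning a negative-definite plane (2.3), and the two degenerate null-adapted configurations (2.4, 2.5). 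In each I would write an explicit frame $I,J$ with $N(I)=1$, $N(J)=-1$ (or the null combination $I+IJ$ in the degenerate cases) and verify the identities $N(A)=1$, $N(B)=1$, $\langle A^-,B^-\rangle=y_0$, together with the stated value of $(A^-B^-)^-$.

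The hard part is the Minkowski case analysis. Unlike the Euclidean case, the sign of $1-x_0^2$ and the sign of $y_0$ interact with the two time-cones, so one cannot orthogonalize with a single definite frame but must adapt the basis to whether the generators are spacelike, timelike or null; this is why 2.4 and 2.5 require the null vector $I+IJ$ rather than an orthogonal splitting. One must also confirm that the resulting $A,B$ are genuinely conjugate inside $U_1=SL(2,\mathbb{R})$, since equality of trace and norm does not force conjugacy there, and this is where admitting conjugation in $U_{\pm1}$ becomes essential.
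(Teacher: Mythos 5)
The paper never proves Theorem \ref{teorema4}: it is quoted verbatim from \cite{HLM2009} and closed with a QED box, so the only comparison available is with the mechanism made visible by the statement itself and by the characterization of $V(\mathcal{I}_{G}^{c})$ recalled in Section 2. Your proposal is correct and reconstructs exactly that mechanism: realizing $(x_{0},y_{0})$ by conjugate unit quaternions is realizing the Gram matrix $\bigl(\begin{smallmatrix}1-x_{0}^{2} & y_{0}\\ y_{0} & 1-x_{0}^{2}\end{smallmatrix}\bigr)$ in $(H_{0},N)$; the relation $w(A,B)=1$ is automatic for any such pair because the generators of $\mathcal{I}_{G}^{c}$ are, by construction, the coefficients of the universal expansion of $w(A,B)$ in $\{1,A^{-},B^{-},(A^{-}B^{-})^{-}\}$; uniqueness comes from Witt's theorem together with $c(U_{\pm 1})=SO(H_{0},N)$; and the case list 1, 2.1--2.5 with its boxed pairs is precisely the enumeration of possible signatures of that Gram matrix realized by orthogonalized (or null-adapted) normal forms, which is essentially the route of the cited source.
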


We also want to study the \emph{c-representations of }$G$\emph{\ in the affine
group }$\mathcal{A}(H)$ of a quaternion algebra $H$, that is representations
of $G$ in the affine group $\mathcal{A}(H)$ of a quaternion algebra $H$ such
that the generators $a$ and $b$ go to conjugate elements, up to conjugation
in $\mathcal{E}q(H)$. Given a such c-representation $\rho :G\longrightarrow
\mathcal{A}(H)$, if $\rho (a)$, $\rho (b)$ has translational part different
from $0$, we can suppose that
\begin{equation*}
\begin{array}{llll}
\rho : & G & \longrightarrow & \mathcal{A}(H) \\
& a & \rightarrow & (sA^{-},A) \\
& b & \rightarrow & (sB^{-}+(A^{-}B^{-})^{-},B)
\end{array}
\end{equation*}
where $(A,B)$ is an irreducible pair of conjugate unit quaternions. (A pair $(A,B)$ is  irreducible
if $\{A^{-},B^{-},(A^{-}B^{-})^{-}\}$ is a basis of $H_{0}$.)

Because $\rho $ is a homomorphism of the semidirect product $H_{0}\rtimes
U_{1}=\mathcal{A}(H),$ we have
\begin{equation}
\rho (w(a,b))=(\frac{\partial w}{\partial a}|_{\phi }\circ v+\frac{\partial w
}{\partial b}|_{\phi }\circ u,w(A,B))=(0,I)  \label{erelfox}
\end{equation}
where $\frac{\partial w}{\partial a}|_{\phi }$ is the Fox derivative of the
word $w(a,b)$ with respect to $a$, and evaluated by $\phi $ such that $\phi
(a)=A$, $\phi (b)=B$. (See \cite{CF1963}.) The equation \ref{erelfox} yields
two relations between the parameters
\begin{eqnarray*}
x &=&A^{+}=B^{+} \\
y &=&-(A^{-}B^{-})^{+} \\
s &=&vector\ shift\ parameter
\end{eqnarray*}
the relations are
\begin{equation}
w(A,B)=I  \label{erel1}
\end{equation}
\begin{equation}
\frac{\partial w}{\partial a}|_{\phi }\circ v+\frac{\partial w}{\partial b}
|_{\phi }\circ u=0  \label{erel2}
\end{equation}

The relation \eqref{erel1} yields the ideal $\mathcal{I}_{G}^{c}=
\{p_{i}(x,y)\mid i\in \left\{ 1,2,3,4\right\} \}$, and it defines $V(
\mathcal{I}_{G}^{c})$ the algebraic variety of c-representations of $G$ in $
SL(2,\mathbb{C})$.

The relation \eqref{erel2} produces four polynomials in $x,y,s$: $\{q_{j}(x,y,s)\mid j\in \left\{ 1,2,3,4\right\} \}.$ The ideal
\begin{equation*}
\mathcal{I}_{aG}^{c}=\{p_{i}(x,y),q_{j}(x,y,s)\mid i,j\in \left\{
1,2,3,4\right\} \}
\end{equation*}
defines an algebraic variety, that we call $V_{a}(\mathcal{I}_{aG}^{c})$ the
\emph{variety of affine c-representations of }$G$\emph{\ in }$A(H)$ up to
conjugation in $\mathcal{E}q(H)$.

\subsection{The group of the trefoil knot}

Consider the standard presentation of the group of the trefoil knot $3_{1}$
as the 2-bridge knot $3/1$. Figure \ref{ftrebol}. (See \cite{BZ}.):
\begin{equation*}
G(3_{1})=|a,b;aba=bab|
\end{equation*}
where $a$ and $b$ are meridians. This knot is also the toroidal knot $
\{3,2\} $. As such, it has the following presentation:

\begin{equation}
G(3_{1})=|F,D;F^{3}=D^{2}|  \label{genfd}
\end{equation}
where $F=ab$ and $D=aba$. It is easy to show that the element $
C:=F^{3}=D^{2} $ belongs to the center of $G(3_{1})$.

\begin{figure}[ht]
\epsfig{file=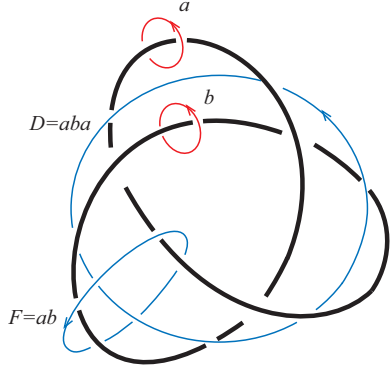,height=5cm} \caption{The trefoil knot $3_{1}$.}\label{ftrebol}
\end{figure}

\section{The representations of $G(3_{1})$ in $U_{1}$}

Let
\begin{equation*}
\begin{array}{cccc}
\rho : & G(3_{1})=|a,b;aba=bab| & \longrightarrow  & U_{1} \\
& a & \rightarrow  & \rho (a)=A \\
& b & \rightarrow  & \rho (b)=B
\end{array}
\end{equation*}
denote a representation of $G=G(3_{1})$ in the unit group of a quaternion
algebra $H$. This always is a c-representation because $a$ and $b$ are
conjugate elements in $G$. Moreover if $\rho $ is irreducible (that is, if
$\{A^{-},B^{-},(A^{-}B^{-})^{-}\}$ is a basis of $H_{0}$, the 3-dimensional
vector space of pure quaternions), then $\{A,B\}$ generates $H$ as an
algebra. Since $\rho (C)$ commutes with $A$ and $B$, it belongs to the
center of $H$. Hence $\rho (C)=\pm 1.$ Applying the algorithm of \cite[Th.3]{HLM2009}
to the presentation
\begin{equation*}
G(3_{1})=|a,b;aba=bab|
\end{equation*}
we obtain $4$ polynomials, two of which are zero and the other two coincide
up to sign with
\begin{equation}
2x^{2}-2y-1.  \label{epoly1}
\end{equation}
Therefore $\mathcal{I}_{G}=(2x^{2}-2y-1)$. The real part of the algebraic
variety $V(\mathcal{I}_{G})$ is the parabola $y=\frac{2x^{2}-1}{2}$ depicted
in Figure \ref{fcurvas}.

\begin{figure}[ht]
\epsfig{file=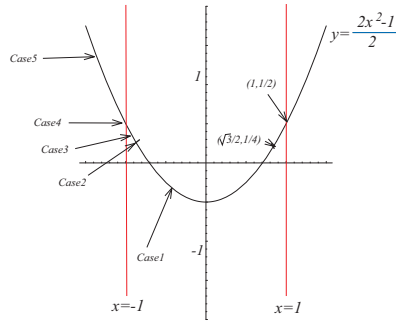,height=4.5cm} \caption{The real curve $V(\mathcal{I}
_{G})$ and the five cases.}\label{fcurvas}
\end{figure}

Theorem \ref{teorema4} establishes the different cases of representations
associated to real points of the algebraic variety $V(\mathcal{I}_{G}).$
Figure \ref{fregiones} shows a pattern on the real plane with coordinates $x$
and $\frac{y}{1-x^{2}}$ : The plane is divided in several labeled regions by
labeled segments. The label corresponds to the case described in Theorem \ref
{teorema4}. Points in the segments between region (1) and (2.3), which have
no label, correspond to almost-irreducible representations in $SL(2,\mathbb{C
})$. Therefore, to apply Theorem \ref{teorema4} to the algebraic variety $V(
\mathcal{I}_{G})$ it suffices to study the graphic of $\frac{y}{1-x^{2}}$ as a
function of $x$ over the pattern.

\begin{figure}[ht]
\epsfig{file=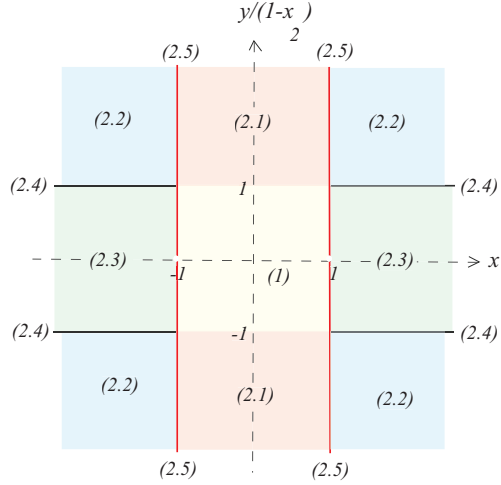,height=6.5cm} \caption{The pattern for different cases.}\label{fregiones}
\end{figure}

Figure \ref{fregiones}
shows $\frac{y}{1-x^{2}}$ as a funtion of $x$ for the algebraic variety $V(\mathcal{I}_{G})$ of the Trefoil knot.

\begin{figure}[ht]
\epsfig{file=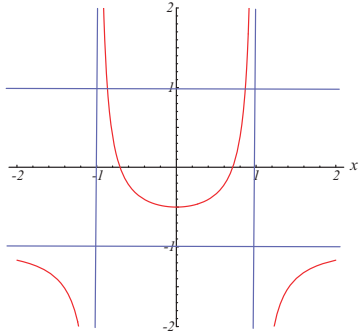,height=4.5cm} \caption{The function $\frac{y}{1-x^{2}}$
for the Trefoil knot.}\label{fysobreuutrebol}
\end{figure}
Then, according with Theorem \ref{teorema4}, there are several
cases:

\subsection{Case 1}
Region (1).
\[x\in (\frac{-\sqrt{3}}{2},\frac{\sqrt{3}}{2}
)\Longleftrightarrow \left\{ 1-x^{2}>0,(1-x^{2})^{2}>y^{2}\right\}, \] where
\begin{equation*}
y=\frac{2x^{2}-1}{2},\quad u=1-x^{2}
\end{equation*}
There exists an irreducible c-representation $\rho
_{x}:G(3_{1})\longrightarrow S^{3}$ realizing $(x,y)$, unique up to
conjugation in $S^{3}$, such that
\begin{equation}
\begin{array}{l}
\rho _{x}(a)=A=x+\frac{2x^{2}-1}{2\sqrt{1-x^{2}}}i+\frac{1}{2}\sqrt{\frac{
3-4x^{2}}{1-x^{2}}}j \\
\rho _{x}(b)=B=x+\sqrt{1-x^{2}}i,\quad \sqrt{1-x^{2}}>0
\end{array}
\label{ecase1}
\end{equation}
The composition of $\rho _{x}$ with $c:S^{3}\rightarrow SO(3)$, where $c(X)$
, $X\in S^{3}$, acts on $P\in H_{0}\cong E^{3}$ by conjugation, defines the
representation $\rho _{x}^{\prime }=c\circ \rho _{x}:G(3_{1})\longrightarrow
SO(3)$. In linear notation, where $\left\{ X,Y,Z\right\} $ is the coordinate
system in $E^{3}$ associated to the basis $\left\{ -ij,j,i\right\} $ we have
\begin{equation*}
\begin{array}{l}
\rho _{x}^{\prime }(a)=m_{x}(a)\left(
\begin{array}{c}
X \\
Y \\
Z%
\end{array}
\right) =\left(
\begin{array}{c}
X^{\prime } \\
Y^{\prime } \\
Z^{\prime }
\end{array}
\right) \\
\rho _{x}^{\prime }(b)=m_{x}(b)\left(
\begin{array}{c}
X \\
Y \\
Z%
\end{array}
\right) =\left(
\begin{array}{c}
X^{\prime } \\
Y^{\prime } \\
Z^{\prime }
\end{array}
\right)
\end{array}
\end{equation*}
where
\begin{equation*}
m_{x}(a)=\left(
\begin{array}{ccc}
2x^{2}-1 & \frac{x-2x^{3}}{\sqrt{1-x^{2}}} & x\sqrt{\frac{4x^{2}-3}{x^{2}-1}}
\\
\frac{x(2x^{2}-1)}{\sqrt{1-x^{2}}} & \frac{-4x^{4}+2x^{2}+1}{2-2x^{2}} &
\frac{(1-2x^{2})\sqrt{3-4x^{2}}}{2(x^{2}-1)} \\
-x\sqrt{\frac{4x^{2}-3}{x^{2}-1}} & \frac{(1-2x^{2})\sqrt{3-4x^{2}}}{
2(x^{2}-1)} & \frac{1-2x^{2}}{2x^{2}-2}
\end{array}
\right)
\end{equation*}
and
\begin{equation*}
m_{x}(b)=
\begin{pmatrix}
2x^{2}-1 & -2x\sqrt{1-x^{2}} & 0 \\
2x\sqrt{1-x^{2}} & 2x^{2}-1 & 0 \\
0 & 0 & 1
\end{pmatrix}
.
\end{equation*}
The maps $\rho _{x}^{\prime }(a)$ and $\rho _{x}^{\prime }(b)$ are right
rotations of angle $\alpha $ around the axes $A^{-}$ and $B^{-}$ where $
x=A^{+}=B^{+}=\cos \frac{\alpha }{2}$. Moreover, $\rho _{x}^{\prime }(C)$ is
the identity. Hence $\rho _{x}^{\prime }:G(3_{1})\longrightarrow SO(3)$
factors through the group $C_{2}\ast C_{3}=|F,D;F^{3}=D^{2}=1|$. Thus $\rho
_{x}^{\prime }(F)=\rho _{x}^{\prime }(ab)$ is a $3$-fold rotation and $\rho
_{x}^{\prime }(D)=\rho _{x}^{\prime }(aba)$ is a $2$-fold rotation.

The angle $\omega $ between the axes of $\rho _{x}^{\prime }(a)$ and $\rho
_{x}^{\prime }(b)$ is given by
\begin{equation*}
\cos \omega =\frac{y}{u}=\frac{x^{2}-\frac{1}{2}}{1-x^{2}}
\end{equation*}
The geometrical meaning of the representation $\rho _{x}^{\prime
}:G(3_{1})\longrightarrow SO(3)$ for $\frac{-\sqrt{3}}{2}<x<\frac{\sqrt{3}}{2
}$ is the following.

\begin{theorem}
\label{tgenso3}The image of $\rho _{x}^{\prime }:G(3_{1})\longrightarrow
SO(3)$ , $-\frac{\sqrt{3}}{2}<x<\frac{\sqrt{3}}{2}$, is isomorphic to the
holonomy of the 2-dimensional spherical cone-manifold $S_{\frac{2\pi }{2},
\frac{2\pi }{3},\alpha }^{2}$, where $x=\cos \frac{\alpha }{2}$ ($\frac{
10\pi }{6}>\alpha >\frac{2\pi }{6}$).

\begin{proof}
The 2-dimensional spherical cone-manifold $S_{\frac{2\pi }{2},\frac{2\pi }{3}
,\alpha }^{2}$ is the result of identifying the edges of the spherical
triangle $T(\frac{2\pi }{3},\frac{\alpha }{2},\frac{\alpha }{2})$ as in the
Figure \ref{s23alfa}. The holonomy of $S_{\frac{2\pi }{2},\frac{2\pi }{3}
,\alpha }^{2}$ is generated by rotations of angle $\alpha $ in the vertices $P$ and $Q$. The distance $r$ between $P$ and $Q$ is calculated by spherical
trigonometry:
\begin{equation*}
\cos r=\frac{\cos ^{2}\frac{\alpha }{2}+\cos \frac{2\pi }{3}}{\sin ^{2}\frac{
\alpha }{2}}=\frac{x^{2}-\frac{1}{2}}{1-x^{2}}=\cos \omega
\end{equation*}
Then $r=\omega $. Therefore the points $P$ and $Q$ are the intersection with
$S^{2}$ of the axes $A^{-}$ and $B^{-}$of the two generators $\rho
_{x}^{\prime }(a)$ and $\rho _{x}^{\prime }(b)$ of the subgroup $\rho
_{x}^{\prime }(G(3/1)).$
\end{proof}
\end{theorem}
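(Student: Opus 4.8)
The plan is to realize the cone-manifold geometrically, read off its holonomy as a pair of rotations of $SO(3)$, and then check by one spherical-trigonometry computation that this pair has the same rotation angle and the same angular separation of axes as the pair $\rho_x'(a),\rho_x'(b)$. Since two rotations of a common angle whose axes make a prescribed angle are determined up to conjugacy in $SO(3)$, the two groups will then coincide after a conjugation and hence be isomorphic.

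First I would fix the geometric model. I realize $S^{2}_{\frac{2\pi}{2},\frac{2\pi}{3},\alpha}$ from the isoceles spherical triangle $T(\frac{2\pi}{3},\frac{\alpha}{2},\frac{\alpha}{2})$, whose apex angle $\frac{2\pi}{3}$ sits at a vertex $O$ and whose two base angles $\frac{\alpha}{2}$ sit at the vertices $P$ and $Q$, by the indicated identifications of its edges. Developing the resulting spherical structure, one reads off that the holonomy is generated by the rotation of angle $\alpha$ about $P$ together with the rotation of angle $\alpha$ about $Q$, the loop around the apex $O$ contributing only the remaining, automatically satisfied, relation. Thus the holonomy group is generated by two rotations of the common angle $\alpha$ whose axes meet $S^2$ at $P$ and $Q$.

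The core of the argument is the computation of the spherical distance $r$ between $P$ and $Q$. Since $PQ$ is the side opposite the apex angle $\frac{2\pi}{3}$ in $T(\frac{2\pi}{3},\frac{\alpha}{2},\frac{\alpha}{2})$, the dual spherical law of cosines, $\cos\frac{2\pi}{3}=-\cos^{2}\frac{\alpha}{2}+\sin^{2}\frac{\alpha}{2}\cos r$, yields
\[
\cos r=\frac{\cos^{2}\frac{\alpha}{2}+\cos\frac{2\pi}{3}}{\sin^{2}\frac{\alpha}{2}}.
\]
Putting $x=\cos\frac{\alpha}{2}$, so that $\sin^{2}\frac{\alpha}{2}=1-x^{2}$, and $\cos\frac{2\pi}{3}=-\frac{1}{2}$, the right-hand side becomes $\frac{x^{2}-\frac{1}{2}}{1-x^{2}}$, which is exactly the value of $\cos\omega$ obtained above for the angle $\omega$ between the axes $A^{-}$ and $B^{-}$ of $\rho_x'(a)$ and $\rho_x'(b)$; hence $r=\omega$. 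As a consistency check, $\cos r$ lies in $(-1,1)$ precisely when $|x|<\frac{\sqrt{3}}{2}$, that is on all of Region (1), which is also the range in which $T(\frac{2\pi}{3},\frac{\alpha}{2},\frac{\alpha}{2})$ is a nondegenerate spherical triangle.

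Finally I would assemble the isomorphism. We already know that $\rho_x'(a)$ and $\rho_x'(b)$ are rotations of angle $\alpha$ about the axes $A^{-}$ and $B^{-}$ with $x=\cos\frac{\alpha}{2}$, and that these axes meet at angle $\omega$, while the holonomy is generated by two rotations of the same angle $\alpha$ about axes meeting at angle $r=\omega$. Choosing $g\in SO(3)$ that sends the axis through $P$ to $A^{-}$ and the axis through $Q$ to $B^{-}$, conjugation by $g$ carries the two holonomy generators to $\rho_x'(a)$ and $\rho_x'(b)$ and so identifies the holonomy group with $\rho_x'(G(3_{1}))$ inside $SO(3)$, giving the required isomorphism. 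The point most in need of care is this last one: I must verify that fixing the angle $\alpha$ and the axis-separation $\omega$ really determines the generating pair up to conjugacy in $SO(3)$ (in particular that the two senses of rotation can be matched simultaneously), so that the correspondence is an isomorphism of abstract groups carrying the relation $aba=bab$ to the apex relation of the cone-manifold, and not merely an identification of generators.
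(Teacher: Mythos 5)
Your proposal is correct and follows essentially the same route as the paper: realize the cone-manifold from the isoceles triangle $T(\frac{2\pi }{3},\frac{\alpha }{2},\frac{\alpha }{2})$, read off the holonomy as two rotations of angle $\alpha$ at $P$ and $Q$, and use the dual spherical law of cosines to show $\cos r=\frac{x^{2}-\frac{1}{2}}{1-x^{2}}=\cos \omega$, identifying the holonomy generators with $\rho _{x}^{\prime }(a)$ and $\rho _{x}^{\prime }(b)$ up to conjugation in $SO(3)$. Your added remarks (the range check $|x|<\frac{\sqrt{3}}{2}$ and the care about matching senses of rotation in the final conjugation) only elaborate steps the paper leaves implicit.
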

\begin{figure}[ht]
\epsfig{file=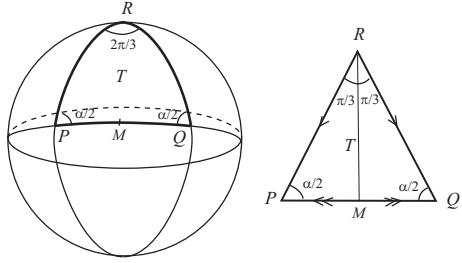,height=3.5cm} \caption{The
spherical conemanifold $S_{\pi ,2\pi /3,\alpha }$.}\label{s23alfa}
\end{figure}
\begin{remark}
The point $R$ (resp. $M$ the mid-point between $P$ and $Q$) is the
intersection with $S^{2}$ of the axis of the $3$-fold (resp. $2$-fold)
rotation $\rho _{x}^{\prime }(F)=\rho _{x}^{\prime }(ab)$ (resp. $\rho
_{x}^{\prime }(D)=\rho _{x}^{\prime }(aba)$).
\end{remark}

\subsection{Case 2} Points in the no-label segments between region (1) and region (2.1).
\[(x,y)=(\pm \frac{\sqrt{3}}{2},\frac{1}{4})\Longleftrightarrow \left\{
1-x^{2}>0,(1-x^{2})^{2}=y^{2}\right\}.\]

There exists an almost-irreducible c-representation $\rho
_{x}:G(3_{1})\longrightarrow U_{1}\subset \left( \frac{-1,1}{\mathbb{C}}
\right) $ realizing $(x,y)$, unique up to conjugation in $U_{1}$, such that:
\begin{eqnarray}
\rho _{\pm \sqrt{3}/2}(a) &=&A=\frac{\pm \sqrt{3}}{2}+\frac{\sqrt{-1}}{2}IJ
\label{ecaso2} \\
\rho _{\pm \sqrt{3}/2}(b) &=&B=\frac{\pm \sqrt{3}}{2}-\frac{1}{2}I+\frac{1}{2
}J+\frac{\sqrt{-1}}{2}IJ  \notag
\end{eqnarray}
This representation cannot be conjugated to any real representation. Under
the isomorphism $U_{1}\approx SL(2,\mathbb{C})$ we have
\begin{equation*}
\begin{array}{cccc}
\rho _{\pm \sqrt{3}/2}: & G(3_{1}) & \longrightarrow & SL(2,\mathbb{C}) \\
& a & \rightarrow & A=
\begin{pmatrix}
\frac{\pm \sqrt{3}}{2}+\frac{\sqrt{-1}}{2} & 0 \\
0 & \frac{\pm \sqrt{3}}{2}-\frac{\sqrt{-1}}{2}
\end{pmatrix}
\\
& b & \rightarrow & B=
\begin{pmatrix}
\frac{\pm \sqrt{3}}{2}+\frac{\sqrt{-1}}{2} & 0 \\
1 & \frac{\pm \sqrt{3}}{2}-\frac{\sqrt{-1}}{2}
\end{pmatrix}
\end{array}
\end{equation*}

The composition of $\rho _{\sqrt{3}/2}$ with $c:U_{1}\rightarrow PSL(2,
\mathbb{C})$, defines the representations $\rho _{\sqrt{3}/2}^{\prime
}=c\circ \rho _{\sqrt{3}/2}:G(3_{1})\longrightarrow PSL(2,\mathbb{C})$ where
(up to conjugation with $w=\frac{-1}{z}$) $\rho _{\sqrt{3}/2}^{\prime }(a)$
is the $\frac{2\pi }{6}$-rotation of $\mathbb{C}P^{1}$around the point $0$:
\begin{equation*}
w=e^{i\frac{2\pi }{6}}z
\end{equation*}
and $\rho _{\sqrt{3}/2}^{\prime }(b)$ is the $\frac{2\pi }{6}$-rotation of $
\mathbb{C}P^{1}$around the point $i$:
\begin{equation*}
w=e^{i\frac{2\pi }{6}}z+e^{i\frac{\pi }{6}}
\end{equation*}

\begin{figure}[ht]
\epsfig{file=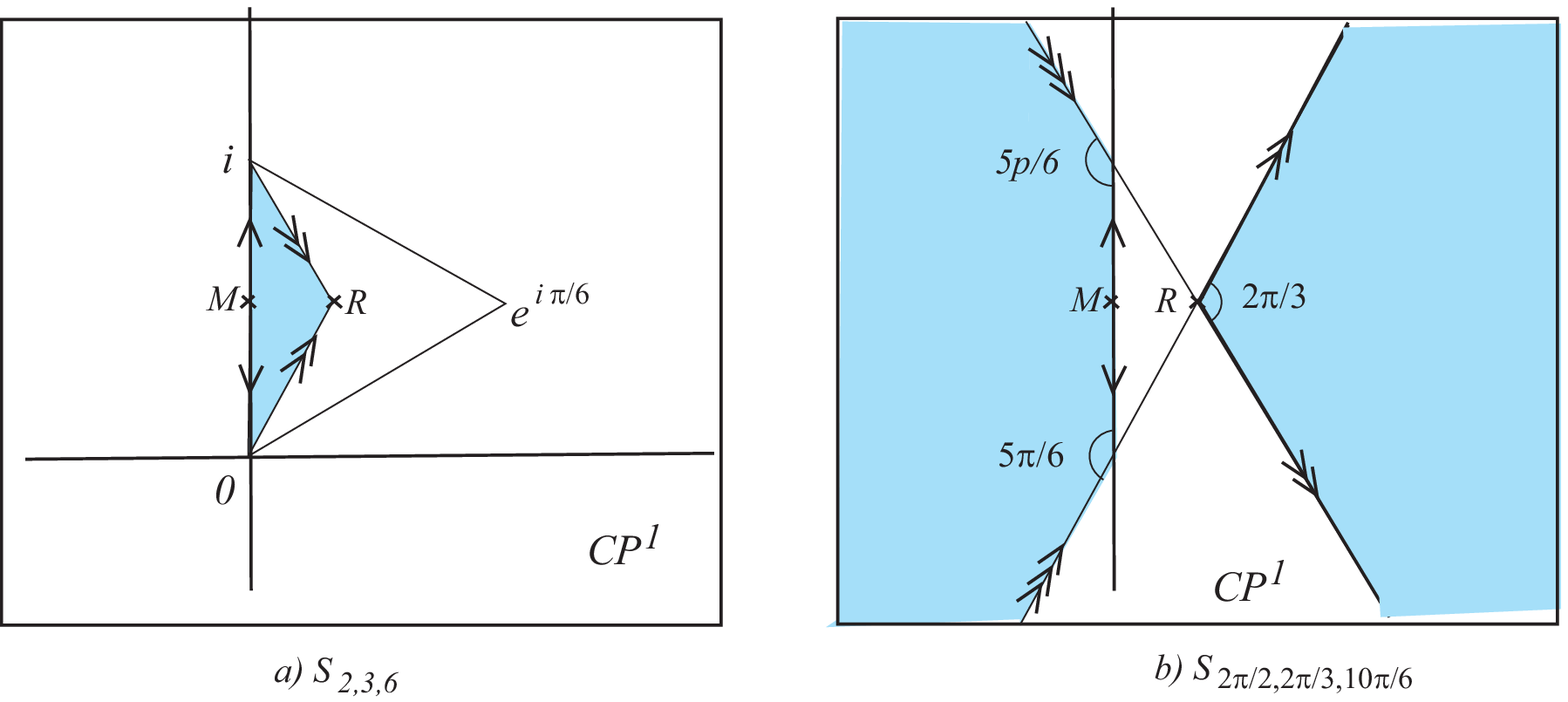,height=4.5cm} \caption{Case 2.}\label{orbifoldS236}
\end{figure}

We see, in fact, that

\begin{theorem}
The image of $\rho _{\sqrt{3}/2}^{\prime }:G(3_{1})\longrightarrow PSL(2,\mathbb{C})$ acts on the Euclidean plane $\mathbb{C}$ as the holonomy of the
Euclidean crystallographic orbifold $S_{2,3,6}^{2}$.
\end{theorem}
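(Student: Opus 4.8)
The plan is to read the image directly off the explicit Euclidean formulas already recorded for $\rho'_{\sqrt3/2}(a)$ and $\rho'_{\sqrt3/2}(b)$, compute the images of the central words $F=ab$ and $D=aba$, recognize the three resulting maps as the vertex rotations of a Euclidean $(2,3,6)$ triangle, and then invoke the classical theory of Euclidean triangle groups to identify the image as the wallpaper group $p6$ and the quotient as $S^2_{2,3,6}$.

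First I would fix notation $A=\rho'(a)$, $B=\rho'(b)$, acting on $\mathbb{C}$ by $A(z)=e^{i\pi/3}z$ and $B(z)=e^{i\pi/3}z+e^{i\pi/6}$; as stated these are the $\tfrac{2\pi}{6}$-rotations about $0$ and about $i$ respectively. Composing, one finds $\rho'(F)=A\circ B\colon z\mapsto e^{i2\pi/3}z+i$, a rotation of order $3$ about $P_3=-\tfrac{\sqrt3}{6}+\tfrac{i}{2}$, and $\rho'(D)=A\circ B\circ A\colon z\mapsto -z+i$, a rotation of order $2$ about $P_2=\tfrac{i}{2}$, while $A=\rho'(a)$ is a rotation of order $6$ about $P_6=0$. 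In particular $\rho'(F)^3=\rho'(D)^2=\mathrm{id}$, so $\rho'(C)=\rho'(F^3)=\mathrm{id}$ and the representation factors through $G(3_1)/\langle C\rangle$. Since $\rho'(a)$, $\rho'(F)$, $\rho'(D)$ all lie in $\langle A,B\rangle$ and conversely $B=A^{-1}\rho'(F)$, the image equals $\Gamma=\langle \rho'(a),\rho'(F),\rho'(D)\rangle$.

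The key algebraic input is the identity $a=F^{-1}D$, which holds in $G(3_1)$ since $F^{-1}D=(ab)^{-1}(aba)=a$. Applying $\rho'$ and using $\rho'(D)^2=\mathrm{id}$ yields the von Dyck relation $\rho'(F)\,\rho'(a)\,\rho'(D)=\mathrm{id}$, so the three generators satisfy $\rho'(F)^3=\rho'(a)^6=\rho'(D)^2=\rho'(F)\rho'(a)\rho'(D)=\mathrm{id}$, exactly the presentation of the $(2,3,6)$ von Dyck group (note $\tfrac13+\tfrac16+\tfrac12=1$, the Euclidean case). I would then verify geometrically, by computing the two side directions at each fixed point, that $P_6,P_3,P_2$ are the vertices of a Euclidean triangle $T$ with interior angles $\tfrac{\pi}{6},\tfrac{\pi}{3},\tfrac{\pi}{2}$ at $P_6,P_3,P_2$ respectively; the rotations of orders $6,3,2$ are precisely the rotations by twice these angles. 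This is the $(2,3,6)$ triangle.

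Finally I would invoke the classical theory of Euclidean triangle groups: the group generated by the rotations of orders $2,3,6$ about the vertices of such a triangle acts properly discontinuously and cocompactly on $\mathbb{C}=E^2$, with a fundamental domain assembled from two copies of $T$, and with quotient the sphere carrying cone points of orders $2,3,6$; this group is the wallpaper group $p6$, that is, the holonomy of the Euclidean crystallographic orbifold $S^2_{2,3,6}$. Since $\Gamma$ coincides with this group, the theorem follows. I expect the main obstacle to be the passage from the abstract von Dyck relations to the concrete identification of $\Gamma$ as the full standard $p6$ with quotient $S^2_{2,3,6}$: one must confirm that the fixed points genuinely form a $(2,3,6)$ Euclidean triangle (the angle computation) and that $\Gamma$ is discrete with the double triangle as fundamental domain, so that no relations beyond the von Dyck ones are imposed.
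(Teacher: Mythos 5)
Your proposal is correct and follows essentially the same route as the paper: the paper gives no formal proof of this theorem, merely exhibiting $\rho'_{\sqrt{3}/2}(a)$ and $\rho'_{\sqrt{3}/2}(b)$ as the $\tfrac{2\pi}{6}$-rotations about $0$ and $i$ and remarking that $\rho'(ab)$ and $\rho'(aba)$ are the $3$-fold and $2$-fold rotations of the resulting $(2,3,6)$ triangle configuration, which is exactly the computation you carry out. Your verification of the von Dyck relation $\rho'(F)\rho'(a)\rho'(D)=\mathrm{id}$ together with the classical Euclidean triangle-group theorem supplies the discreteness/no-extra-relations step that the paper leaves to inspection; the only discrepancy (your $3$-fold center $-\tfrac{\sqrt{3}}{6}+\tfrac{i}{2}$ versus the paper's barycenter $+\tfrac{\sqrt{3}}{6}+\tfrac{i}{2}$) is an immaterial difference in composition-order convention for $\rho'(ab)$ and does not affect the identification of the image as the holonomy of $S^{2}_{2,3,6}$.
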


\begin{remark}
The barycenter of the triangle $\{0,i,e^{i\frac{\pi }{6}}\}$ is the center
of the $3$-fold rotation $\rho _{x}^{\prime }(F)=\rho _{\sqrt{3}/2}^{\prime
}(ab)$ and the point $\frac{i}{2}$ is the center of the $2$-fold rotation $
\rho _{x}^{\prime }(D)=\rho _{\sqrt{3}/2}^{\prime }(aba)$). See Figure \ref
{orbifoldS236}. a)
\end{remark}

\begin{remark}
The image of $\rho _{\sqrt{3}/2}^{\prime }:G(3_{1})\longrightarrow PSL(2,\mathbb{C})$ can be interpreted as the holonomy of the Euclidean
crystallographic conemanifold $S_{\frac{2\pi }{2},\frac{2\pi }{3},\frac{10\pi }{6}}^{2}$. See Figure \ref{orbifoldS236}. b)
\end{remark}

\subsection{Case 3}
Region (2.1).
\[x\in (-1,\frac{-\sqrt{3}}{2})\cup (\frac{\sqrt{3}}{2}
,1)\Longleftrightarrow \left\{ 1-x^{2}>0,(1-x^{2})^{2}<y^{2}\right\} .\]

There exists an irreducible c-representation $\rho
_{x}:G(3_{1})\longrightarrow SL(2,\mathbb{R})=U_{1}\subset \left( \frac{-1,1
}{\mathbb{R}}\right) $ realizing $(x,y)$, unique up to conjugation in $SL(2,
\mathbb{R})$, such that
\begin{equation*}
\begin{array}{l}
\rho _{x}(a)=A=x+\sqrt{1-x^{2}}i,\quad \sqrt{1-x^{2}}>0 \\
\rho _{x}(b)=B=x+\frac{2x^{2}-1}{2\sqrt{1-x^{2}}}i+\frac{1}{2}\sqrt{\frac{
-3+4x^{2}}{1-x^{2}}}j
\end{array}
\end{equation*}
The composition of $\rho _{x}$ with $c:SL(2,\mathbb{R})\rightarrow
SO^{0}(1,2)\cong Iso^{+}(\mathbb{H}^{2})$, where $c(X)$, $X\in SL(2,\mathbb{R})$, acts on $P\in H_{0}\cong E^{1,2}$ by conjugation, defines the
representation $\rho _{x}^{\prime }=c\circ \rho _{x}:G(3_{1})\longrightarrow
SO^{0}(1,2)$ in affine linear notation, where $\left\{ X,Y,Z\right\} $ is
the coordinate system associated to the basis $\left\{ -ij,j,i\right\} $
\begin{equation*}
\begin{array}{l}
\rho _{x}^{\prime }(a)=m_{x}(a)\left(
\begin{array}{c}
X \\
Y \\
Z%
\end{array}
\right) =\left(
\begin{array}{c}
X^{\prime } \\
Y^{\prime } \\
Z^{\prime }
\end{array}
\right) \\
\rho _{x}^{\prime }(b)=m_{x}(b)\left(
\begin{array}{c}
X \\
Y \\
Z
\end{array}
\right) =\left(
\begin{array}{c}
X^{\prime } \\
Y^{\prime } \\
Z^{\prime }
\end{array}
\right)
\end{array}
\end{equation*}
such that the matrices of $\rho _{x}^{\prime }(a)$ and $\rho
_{x}^{\prime }(b)$ are respectively:
\begin{equation*}
m_{x}(a)=\left(
\begin{array}{ccc}
2x^{2}-1 & -2x\sqrt{1-x^{2}} & 0 \\
2x\sqrt{1-x^{2}} & 2x^{2}-1 & 0 \\
0 & 0 & 1
\end{array}
\right)
\end{equation*}
and
\begin{equation*}
m_{x}(b)=
\begin{pmatrix}
2x^{2}-1 & \frac{x-2x^{3}}{\sqrt{1-x^{2}}} & x\sqrt{\frac{3-4x^{2}}{x^{2}-1}}
\\
\frac{x(2x^{2}-1)}{\sqrt{1-x^{2}}} & \frac{1+2x^{2}-4x^{4}}{2-2x^{2}} &
\frac{\sqrt{4x^{2}-3}(2x^{2}-1)}{2(1-x^{2})} \\
x\sqrt{\frac{3-4x^{2}}{x^{2}-1}} & \frac{\sqrt{4x^{2}-3}(2x^{2}-1)}{
2(1-x^{2})} & \frac{1-2x^{2}}{2x^{2}-2}
\end{pmatrix}.
\end{equation*}
The maps $\rho _{x}^{\prime }(a)$ and $\rho _{x}^{\prime }(b)$ are right
(spherical) rotations on $H_{0}\cong E^{1,2}$ of angle $\alpha $ around the
time-like axes $A^{-}$ and $B^{-}$ where $x=A^{+}=B^{+}=\cos \frac{\alpha }{2
}$. See Figure \ref{fambmi}.

\begin{figure}[ht]
\epsfig{file=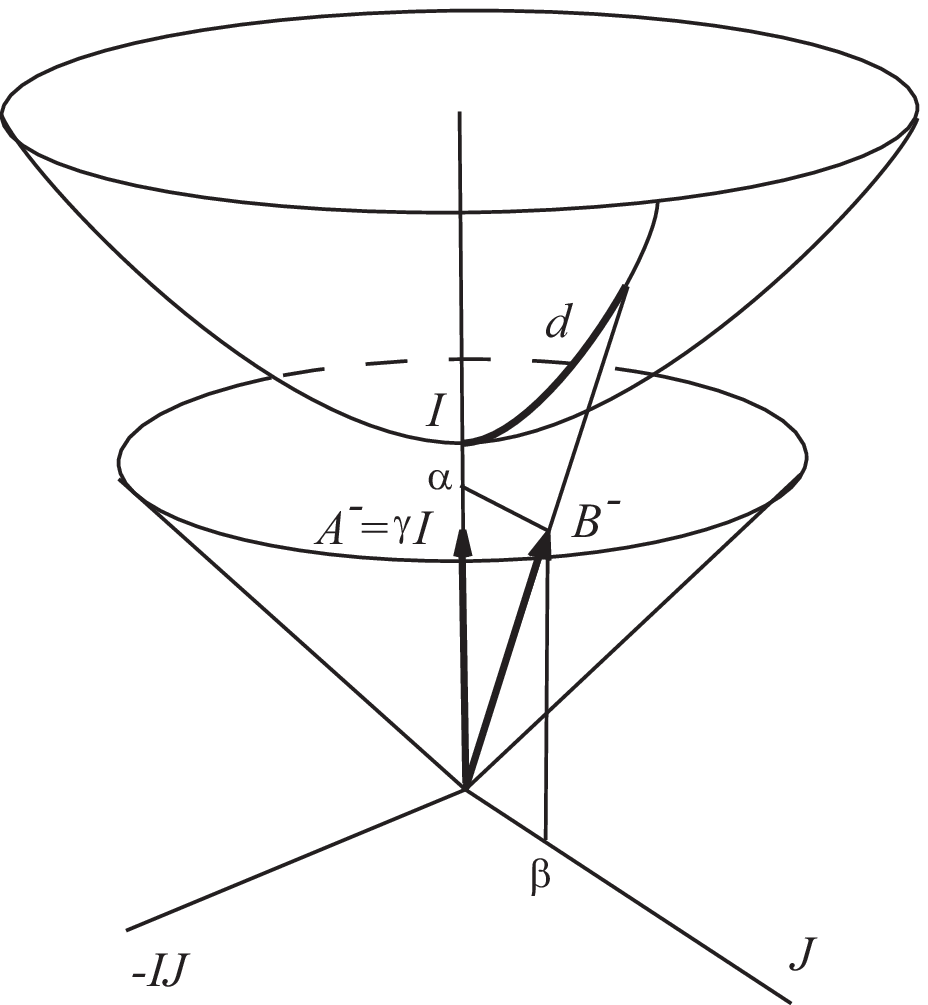,height=4.5cm} \caption{Case 3.}\label{fambmi}
\end{figure}

Moreover, $\rho _{x}^{\prime }(C)$ is the identity. Hence $\rho _{x}^{\prime
}:G(3_{1})\longrightarrow SO^{0}(1,2)$ factors through the group $C_{2}\ast
C_{3}=|F,D;F^{3}=D^{2}=1|$. Thus $\rho _{x}^{\prime }(F)=\rho _{x}^{\prime
}(ab)$ is a $3$-fold rotation and $\rho _{x}^{\prime }(D)=\rho _{x}^{\prime
}(aba)$ is a $2$-fold rotation.

The distance $d$ (measured in the hyperbolic plane) between the axes of $\rho _{x}^{\prime }(a)$ and $\rho _{x}^{\prime }(b)$ is given by
\begin{equation*}
\cosh d=\frac{y}{u}=\frac{x^{2}-\frac{1}{2}}{-x^{2}+1}
\end{equation*}
The geometrical meaning of the representation $\rho _{x}^{\prime
}:G(3_{1})\longrightarrow SO^{0}(1,2)\cong Iso^{+}(\mathbb{H}^{2})$ for $x\in (\frac{\sqrt{3}}{2},1)$ is the following.

\begin{figure}[ht]
\epsfig{file=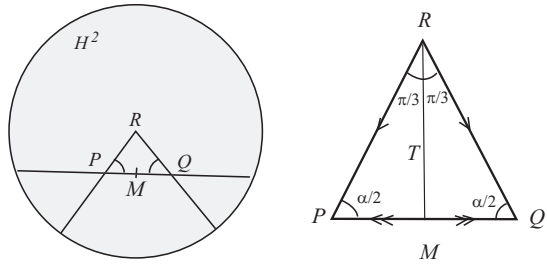,height=3.5cm} \caption{The hyperbolic cone-manifold $S_{\frac{2\pi }{2},
\frac{2\pi }{3},\alpha }^{2}$.}\label{s23alfahy}
\end{figure}

\begin{theorem}
\bigskip \label{tgenso12}The image of $\rho _{x}^{\prime }$, $\frac{\sqrt{3}
}{2}<x<1$, is a subgroup of $SO^{o}(1,2)$ isomorphic to the holonomy of the
2-dimensional hyperbolic cone-manifold $S_{\frac{2\pi }{2},\frac{2\pi }{3}
,\alpha }^{2}$, where $x=\cos \frac{\alpha }{2}$ ($\frac{2\pi }{6}>\alpha >0$
).

\begin{proof}
The 2-dimensional hyperbolic cone-manifold $S_{\frac{2\pi }{2},\frac{2\pi }{3
},\alpha }^{2}$ is the result of identifying the edges of the hyperbolic
triangle $T(\frac{2\pi }{3},\frac{\alpha }{2},\frac{\alpha }{2})$ as in
Figure \ref{s23alfahy} . The holonomy of $S_{\frac{2\pi }{2},\frac{2\pi }{3}
,\alpha }^{2}$ is generated by rotations of angle $\alpha $ in the vertices $
P$ and $Q$. The distance $d^{\prime }$ between $P$ and $Q$ is calculated by
hyperbolic trigonometry:
\begin{equation*}
\cosh d^{\prime }=\frac{\cos ^{2}\frac{\alpha }{2}+\cos \frac{2\pi }{3}}{
\sin ^{2}\frac{\alpha }{2}}=\frac{x^{2}-\frac{1}{2}}{-x^{2}+1}=\cosh d
\end{equation*}
Thus $d^{\prime }=d$. Therefore the points $P$ and $Q$ are the intersection
with the upper sheet of the two sheeted hyperboloid ( model of the
2-dimensional hyperbolic plane) of the axis $A^{-}$ and $B^{-}$of the two
generators of the subgroup $\rho _{x}^{\prime }(G(3_{1})).$
\end{proof}
\end{theorem}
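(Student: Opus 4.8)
The plan is to reproduce the proof of Theorem~\ref{tgenso3} line by line, replacing the round sphere $S^{2}$ by the upper sheet of the two-sheeted hyperboloid in $E^{1,2}$ (the hyperboloid model of $\mathbb{H}^{2}$) and spherical trigonometry by hyperbolic trigonometry. First I would build the cone-manifold exactly as in the spherical case: $S_{\frac{2\pi}{2},\frac{2\pi}{3},\alpha}^{2}$ is obtained by identifying the edges of the isosceles hyperbolic triangle $T(\frac{2\pi}{3},\frac{\alpha}{2},\frac{\alpha}{2})$ (Figure~\ref{s23alfahy}), and its holonomy is the subgroup of $Iso^{+}(\mathbb{H}^{2})\cong SO^{o}(1,2)$ generated by the two rotations of angle $\alpha$ centred at the two vertices $P$ and $Q$ carrying the angle $\frac{\alpha}{2}$.

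Before the trigonometric comparison I would check that this triangle exists as a genuine hyperbolic triangle precisely in the stated parameter range. A hyperbolic triangle with prescribed angles exists if and only if the angle sum is strictly less than $\pi$, i.e. $\frac{2\pi}{3}+\frac{\alpha}{2}+\frac{\alpha}{2}=\frac{2\pi}{3}+\alpha<\pi$, which is equivalent to $0<\alpha<\frac{\pi}{3}=\frac{2\pi}{6}$; under the substitution $x=\cos\frac{\alpha}{2}$ this is exactly $\frac{\sqrt{3}}{2}<x<1$, the range of Case~3. This is the hyperbolic counterpart of the existence of the spherical triangle used in Theorem~\ref{tgenso3}.

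The heart of the argument is to show that the distance $d'$ between $P$ and $Q$ equals the distance $d$ between the time-like axes $A^{-}$ and $B^{-}$ already computed above. Applying the second (angle–side) law of cosines of hyperbolic geometry to $T(\frac{2\pi}{3},\frac{\alpha}{2},\frac{\alpha}{2})$, with the angle $\frac{2\pi}{3}$ opposite the base $PQ$, gives
\begin{equation*}
\cosh d'=\frac{\cos^{2}\frac{\alpha}{2}+\cos\frac{2\pi}{3}}{\sin^{2}\frac{\alpha}{2}}.
\end{equation*}
Substituting $\cos\frac{\alpha}{2}=x$, $\cos\frac{2\pi}{3}=-\frac{1}{2}$ and $\sin^{2}\frac{\alpha}{2}=1-x^{2}$ yields
\begin{equation*}
\cosh d'=\frac{x^{2}-\frac{1}{2}}{-x^{2}+1}=\cosh d,
\end{equation*}
so that $d'=d$.

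Finally I would conclude by matching generators. Both $\rho_{x}^{\prime}(a)$ and $\rho_{x}^{\prime}(b)$ are elliptic rotations of angle $\alpha$ (since $x=A^{+}=B^{+}=\cos\frac{\alpha}{2}$) whose fixed points are the intersections of the time-like axes $A^{-}$, $B^{-}$ with the upper sheet of the hyperboloid, and these fixed points lie at distance $d=d'$. A rotation of $\mathbb{H}^{2}$ is determined by its centre and angle, and $SO^{o}(1,2)$ acts transitively on ordered pairs of points at a fixed distance; hence the ordered pair $(\rho_{x}^{\prime}(a),\rho_{x}^{\prime}(b))$ is conjugate in $SO^{o}(1,2)$ to the pair of holonomy generators of $S_{\frac{2\pi}{2},\frac{2\pi}{3},\alpha}^{2}$. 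Consequently the two groups they generate are conjugate, in particular isomorphic, which is the assertion. I expect the only genuinely delicate point to be the Minkowski input imported from the discussion preceding the theorem: one must know that $A^{-}$ and $B^{-}$ are time-like, so that $c\circ\rho_{x}$ produces \emph{elliptic} isometries fixing points of $\mathbb{H}^{2}$ (rather than space-like axes giving hyperbolic translations), and that each rotation angle is exactly $\alpha$. Both facts I would read off from the explicit matrices $m_{x}(a)$, $m_{x}(b)$ and the identity $x=\cos\frac{\alpha}{2}$ established for $\frac{\sqrt{3}}{2}<x<1$.
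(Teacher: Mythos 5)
Your proposal is correct and follows essentially the same route as the paper: the identical construction of $S_{\frac{2\pi }{2},\frac{2\pi }{3},\alpha }^{2}$ from the triangle $T(\frac{2\pi }{3},\frac{\alpha }{2},\frac{\alpha }{2})$, and the same key computation via the angle--side (dual) law of cosines giving $\cosh d'=\frac{x^{2}-\frac{1}{2}}{1-x^{2}}=\cosh d$. Your added checks---that the triangle exists exactly when $\frac{\sqrt{3}}{2}<x<1$, and the explicit conjugacy argument using transitivity of $SO^{o}(1,2)$ on pairs of points at a given distance---merely make explicit what the paper leaves implicit.
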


\begin{remark}
The point $R$ (resp. $M$ the mid-point between $P$ and $Q$) is the
intersection with $\mathbb{H}^{2}$ of the axis of the $3$-fold (resp. $2$
-fold) rotation $\rho _{x}^{\prime }(F)=\rho _{x}^{\prime }(ab)$ (resp. $\rho _{x}^{\prime }(D)=\rho _{x}^{\prime }(aba)$).
\end{remark}

\subsection{Case 4}
Segment (2.5).
\[(x,y)=(\pm 1,\frac{1}{2})\Longleftrightarrow \left\{
1-x^{2}=0,(1-x^{2})^{2}<y^{2}\right\}. \]

There exists an irreducible c-representation $\rho
_{x}:G(3_{1})\longrightarrow SL(2,\mathbb{R})=U_{1}\subset \left( \frac{-1,1}{\mathbb{R}}\right) $ realizing $(x,y)$, unique up to conjugation in $SL(2,\mathbb{R})$, such that
\begin{equation*}
\begin{array}{l}
\rho _{x}(a)=A=\pm 1+i+j \\
\rho _{x}(b)=B=\pm 1+\frac{1}{4}(i-j)
\end{array}
\end{equation*}
The composition of $\rho _{x}$ with $c:SL(2,\mathbb{R})\rightarrow
SO^{0}(1,2)\cong Iso^{+}(\mathbb{H}^{2})$, defines the representation $\rho
_{x}^{\prime }=c\circ \rho _{x}:G(3_{1})\longrightarrow SO^{0}(1,2)$ such
that the matrices of $\rho _{x}^{\prime }(a)$ and $\rho _{x}^{\prime }(b)$
are respectively:
\begin{equation*}
m_{x}(a)=m(-1,1;\pm 1,1,1,0)=\left(
\begin{array}{ccc}
1 & -2 & 2 \\
2 & -1 & 2 \\
2 & -2 & 3
\end{array}
\right)
\end{equation*}
and%
\begin{equation*}
m_{x}(b)=m(-1,1;\pm 1,\frac{1}{4},-\frac{1}{4},0)=
\begin{pmatrix}
1 & -\frac{1}{2} & -\frac{1}{2} \\
\frac{1}{2} & \frac{7}{8} & -\frac{1}{8} \\
-\frac{1}{2} & \frac{1}{8} & \frac{9}{8}
\end{pmatrix}
\end{equation*}
where $\left\{ X,Y,Z\right\} $ is the coordinate system associated to the
basis $\left\{ -ij,j,i\right\} $. The maps $\rho _{x}^{\prime }(a)$ and $\rho _{x}^{\prime }(b)$ are parabolic rotations on $H_{0}\cong E^{1,2}$
around the light-like axes $A^{-}$ and $B^{-}$. See Figure \ref{fambmiii}.

\begin{figure}[ht]
\epsfig{file=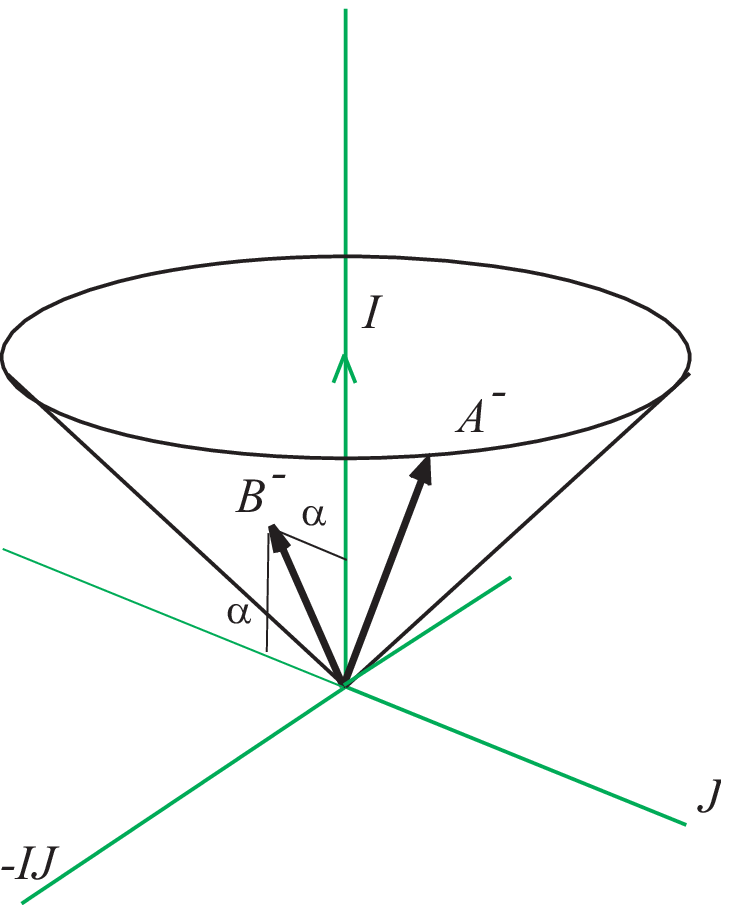,height=4.5cm} \caption{Case 4. }\label{fambmiii}
\end{figure}

Therefore the image of $\rho
_{1}^{\prime }:G(3_{1})\longrightarrow Iso^{+}(\mathbb{H}^{2})$ is conjugate
to the action of the modular group $PSL(2,\mathbb{Z})$ in $\mathbb{H}^{2}$.
Thus

\begin{figure}[ht]
\epsfig{file=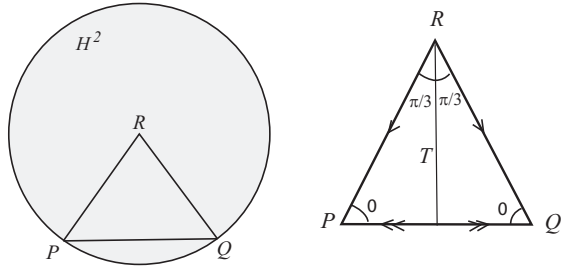,height=3.5cm} \caption{The hyperbolic orbifold $
S_{2,3,\infty }^{2}$.}\label{s23alfay4}
\end{figure}

\begin{theorem}
The image of $\rho _{1}^{\prime }:G(3_{1})\longrightarrow SO^{0}(1,2)$ acts
on the hyperbolic plane $\mathbb{H}^{2}$ as the holonomy of the hyperbolic
orbifold $S_{2,3,\infty }^{2}$.
\end{theorem}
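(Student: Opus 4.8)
The plan is to run the argument in parallel with the proofs of Theorems \ref{tgenso3} and \ref{tgenso12}, reading the present situation as the degenerate limit $\alpha\to 0$ (equivalently $x=\cos\frac{\alpha}{2}\to\pm1$) of Case 3, in which the two equal cone angles $\frac{\alpha}{2}$ of the triangle $T(\frac{2\pi}{3},\frac{\alpha}{2},\frac{\alpha}{2})$ collapse to zero and the corresponding elliptic generators degenerate to parabolics. First I would record that $\rho_x^{\prime}(C)=\mathrm{id}$: as observed at the start of this section, $C=F^3=D^2$ is central in $G(3_1)$, so for the irreducible $\rho_x$ the element $\rho_x(C)$ lies in the center of $H$ and has norm $1$, whence $\rho_x(C)=\pm1$ and $\rho_x^{\prime}(C)=c(\pm1)=\mathrm{id}$. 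Therefore $\rho_x^{\prime}$ factors through
\[
G(3_1)/\langle C\rangle = C_2\ast C_3 = |F,D;F^3=D^2=1|,
\]
so that $\rho_x^{\prime}(F)=\rho_x^{\prime}(ab)$ has order $3$ and $\rho_x^{\prime}(D)=\rho_x^{\prime}(aba)$ has order $2$, while $\rho_x^{\prime}(a)$ and $\rho_x^{\prime}(b)$ are the parabolic transformations around the light-like axes $A^-$ and $B^-$ described just above.

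Next I would identify the quotient orbifold geometrically. Because $A^-$ and $B^-$ are light-like, the fixed points of the parabolics $\rho_x^{\prime}(a),\rho_x^{\prime}(b)$ lie on $\partial\mathbb{H}^2$, and they play the role of the vertices $P,Q$ of Theorem \ref{tgenso12} in the limit $\alpha\to0$: the triangle $T(\frac{2\pi}{3},\frac{\alpha}{2},\frac{\alpha}{2})$ degenerates to an ideal triangle $T(\frac{2\pi}{3},0,0)$ having one finite vertex of angle $\frac{2\pi}{3}$ and two ideal vertices. Identifying its edges as in Figure \ref{s23alfay4} yields an orbifold carrying an order-$3$ cone point (the fixed point $R$ of $\rho_x^{\prime}(F)$), an order-$2$ cone point (the fixed point $M$ of $\rho_x^{\prime}(D)$), and a single cusp where the two coincident parabolic fixed points are glued, namely $S_{2,3,\infty}^2$.

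To promote this picture to a statement about the holonomy I would then invoke the fact established immediately above the theorem, that the image of $\rho_1^{\prime}$ is conjugate to the modular group $PSL(2,\mathbb{Z})$ acting on $\mathbb{H}^2$. Since $C_2\ast C_3\cong PSL(2,\mathbb{Z})$ is precisely the orbifold fundamental group of the $(2,3,\infty)$ modular orbifold and the action is properly discontinuous with the standard modular fundamental domain, the quotient $\mathbb{H}^2/\rho_1^{\prime}(G(3_1))$ is exactly $S_{2,3,\infty}^2$, and hence the image of $\rho_1^{\prime}$ is the holonomy of $S_{2,3,\infty}^2$.

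The step I expect to be the crux is the identification of the image with the \emph{full} modular group rather than a proper subgroup: one must check that the two meridian parabolics $\rho_x^{\prime}(a),\rho_x^{\prime}(b)$ generate a \textbf{discrete} group conjugate to $PSL(2,\mathbb{Z})$ and that the two parabolic fixed points glue to a single finite-area cusp. I would settle this either by conjugating the explicit $SL(2,\mathbb{R})$ matrices $A,B$ to the standard parabolic generators $\bigl(\begin{smallmatrix}1&1\\0&1\end{smallmatrix}\bigr)$ and $\bigl(\begin{smallmatrix}1&0\\-1&1\end{smallmatrix}\bigr)$ of $SL(2,\mathbb{Z})$, or by a Ping--Pong argument on the two parabolic subgroups $\langle\rho_x^{\prime}(a)\rangle$ and $\langle\rho_x^{\prime}(b)\rangle$.
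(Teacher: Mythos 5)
Your main line of argument is correct and is, in substance, the same as the paper's: the paper gives no separate proof of this theorem, deducing it in one sentence from the parabolicity of $\rho _{1}^{\prime }(a)$, $\rho _{1}^{\prime }(b)$ and the asserted conjugacy of the image to the modular group $PSL(2,\mathbb{Z})$, and your proposal supplies exactly that missing step. Your first method for the crux (explicit conjugation) is the right one, and it can be streamlined so that no matrix conjugation need ever be exhibited: the standard representation
\begin{equation*}
a\mapsto T=\begin{pmatrix}1&1\\0&1\end{pmatrix},\qquad b\mapsto U^{-1}=\begin{pmatrix}1&0\\-1&1\end{pmatrix}
\end{equation*}
satisfies $aba=bab$, has image $\langle T,U^{-1}\rangle =SL(2,\mathbb{Z})$ (since $TU^{-1}T$ is the standard order-four element $S^{-1}$), and realizes the same character point $(x,y)=(1,\tfrac{1}{2})$ as $\rho _{1}$ (one checks $A^{+}=1$ and $-(A^{-}B^{-})^{+}=\tfrac{1}{2}$); the uniqueness clause of Theorem \ref{teorema4}, case 2, then forces $\rho _{1}$ to be conjugate to it in $U_{\pm 1}$, so the image of $\rho _{1}^{\prime }$ is conjugate to $PSL(2,\mathbb{Z})$, whose action on $\mathbb{H}^{2}$ is classically the holonomy of the modular orbifold $S_{2,3,\infty }^{2}$. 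However, your second proposed method, ping-pong on the two parabolic subgroups $\langle \rho _{1}^{\prime }(a)\rangle $ and $\langle \rho _{1}^{\prime }(b)\rangle $, cannot work: Klein's criterion would conclude that the group they generate is the free product of two infinite cyclic groups, i.e.\ free of rank $2$, whereas $\langle T,U^{-1}\rangle \cong PSL(2,\mathbb{Z})\cong C_{2}\ast C_{3}$ is not free (contrast Sanov's theorem, where $\langle T^{2},U^{2}\rangle $ \emph{is} free and ping-pong does apply); so the ping-pong hypotheses simply fail for these two parabolics. If you want a ping-pong route, run it instead on the elliptic subgroups $\langle \rho _{1}^{\prime }(D)\rangle \cong C_{2}$ and $\langle \rho _{1}^{\prime }(F)\rangle \cong C_{3}$ to get discreteness and injectivity of $C_{2}\ast C_{3}$, and then note that the presence of parabolic elements forces the end of $\mathbb{H}^{2}/\rho _{1}^{\prime }(G(3_{1}))$ to be a cusp rather than a funnel; this is what distinguishes $S_{2,3,\infty }^{2}$ from the infinite-volume orbifolds $O_{2,3,\delta }^{2}$ of Case 5, which have isomorphic holonomy groups. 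Your degenerate-triangle picture is a fine heuristic but, as you yourself note, it does not by itself prove discreteness or surjectivity onto the full modular group.
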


\subsection{Case 5}
Region (2.2).
\[x\in (-\infty ,-1)\cup (1,\infty )\Longleftrightarrow \left\{
1-x^{2}<0,(1-x^{2})^{2}<y^{2}\right\}\quad ( y>0).\]

There exists an irreducible c-representation $\rho
_{x}:G(3_{1})\longrightarrow SL(2,\mathbb{R})=U_{1}\subset \left( \frac{-1,1}{\mathbb{R}}\right) $ realizing $(x,y)$, unique up to conjugation in $SL(2,\mathbb{R})$, such that
\begin{equation*}
\begin{array}{l}
\rho _{x}(a)=A=x+\sqrt{x^{2}-1}j,\quad \sqrt{x^{2}-1}>0 \\
\rho _{x}(b)=B=x-\frac{1}{2}\sqrt{\frac{4x^{2}-3}{x^{2}-1}}i-\frac{2x^{2}-1}{
2\sqrt{x^{2}-1}}j
\end{array}
\end{equation*}
The composition of $\rho _{x}$ with $c:SL(2,\mathbb{R})\rightarrow
SO^{0}(1,2)\cong Iso^{+}(\mathbb{H}^{2})$, defines the representation $\rho
_{x}^{\prime }=c\circ \rho _{x}:G(3/1)\longrightarrow SO^{0}(1,2)$ such that
the matrices of $\rho _{x}^{\prime }(a)$ and $\rho _{x}^{\prime }(b)$ are
respectively:

\begin{equation*}
m_{x}(a)=\left(
\begin{array}{cccc}
2x^{2}-1 & 0 & 2x\sqrt{x^{2}-1} & 0 \\
0 & 1 & 0 & \frac{\left( 3-4x^{2}\right) \sqrt{x^{2}-1}}{4x} \\
2x\sqrt{x^{2}-1} & 0 & 2x^{2}-1 & 0
\end{array}
\right)
\end{equation*}

\begin{equation*}
m_{x}(b)=
\begin{pmatrix}
2x^{2}-1 & x\sqrt{\frac{-3+4x^{2}}{x^{2}-1}} & -\frac{x(2x^{2}-1)}{\sqrt{
x^{2}-1}} \\
-x\sqrt{\frac{-3+4x^{2}}{x^{2}-1}} & \frac{1-2x^{2}}{2x^{2}-2} & \frac{\sqrt{
-3+4x^{2}}(2x^{2}-1)}{2(x^{2}-1)} \\
-\frac{x(2x^{2}-1)}{\sqrt{x^{2}-1}} & \frac{\sqrt{-3+4x^{2}}(2x^{2}-1)}{
2(x^{2}-1)} & \frac{1+2x^{2}-4x^{4}}{2-2x^{2}}
\end{pmatrix}
\end{equation*}

The maps $\rho _{x}^{\prime }(a)$ and $\rho _{x}^{\prime }(b)$ are
hyperbolic rotations on $H_{0}\cong E^{1,2}$ moving $\delta $ along the
polars of the space-like vectors $A^{-}$ and $B^{-}$ where $
x=A^{+}=B^{+}=\cosh \frac{\delta }{2}$. See Figure \ref{fcaseii}.

\begin{figure}[ht]
\epsfig{file=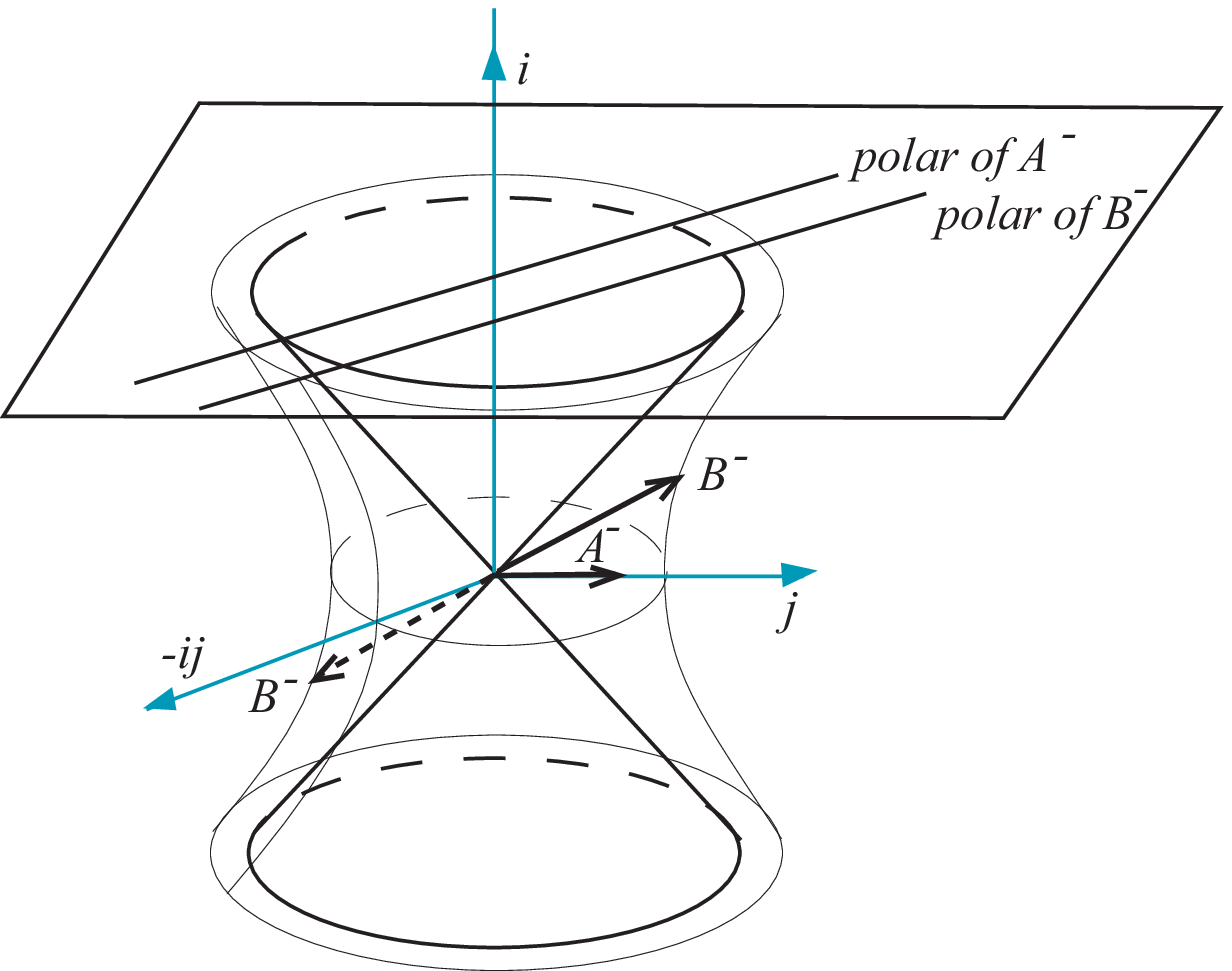,height=4.5cm} \caption{Case 5.}\label{fcaseii}
\end{figure}

Moreover, $\rho _{x}^{\prime }(C)$ is the identity. Hence $\rho
_{x}^{\prime }:G(3_{1})\longrightarrow SO^{0}(1,2)$ factors through the
group $C_{2}\ast C_{3}=|F,D;F^{3}=D^{2}=1|$. Thus $\rho _{x}^{\prime
}(F)=\rho _{x}^{\prime }(ab)$ is a $3$-fold rotation and $\rho _{x}^{\prime
}(D)=\rho _{x}^{\prime }(aba)$ is a $2$-fold rotation.

The distance $d$ (measured in the hyperbolic plane) between the polars of
the axes of $\rho _{x}^{\prime }(a)$ and $\rho _{x}^{\prime }(b)$ is given
by
\begin{equation*}
\cosh d=\frac{y}{x^{2}-1}=\frac{x^{2}-\frac{1}{2}}{x^{2}-1}
\end{equation*}
The geometrical meaning of the representation $\rho _{x}^{\prime
}:G(3_{1})\longrightarrow SO^{0}(1,2)\cong Iso^{+}(\mathbb{H}^{2})$ for $x\in (1,\infty )$ is the following.

\begin{figure}[ht]
\epsfig{file=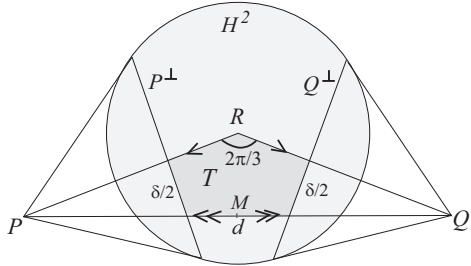,height=3.5cm} \caption{The hyperbolic orbifold $O_{2,3,\delta }$.}\label{s23alfay5}
\end{figure}

\begin{theorem}
\bigskip \label{tgenso122}The image of $\rho _{x}^{\prime }$, $1<x<\infty $,
is a subgroup of $SO^{0}(1,2)$ isomorphic to the holonomy of the
2-dimensional hyperbolic orbifold $O_{2,3,\delta }^{2}$ where $O^{2}$ is an
open disk and $\delta $ is the length of the closed geodesic at the end of $O^{2}$, where $x=\cosh \frac{\delta }{2}$.

\begin{proof}
The 2-dimensional hyperbolic cone-manifold $O_{2,3,\delta }^{2}$ is the
result of identifying the edges of the hyperbolic triangle $T(\frac{2\pi }{3},\frac{\delta }{2},\frac{\delta }{2})$ as in Figure \ref{s23alfay5}, where $P$ and $Q$ are ultrainfinite points such that the length of the segments $T(\frac{2\pi }{3},\frac{\delta }{2},\frac{\delta }{2})\cap P^{\perp }$ and $T(\frac{2\pi }{3},\frac{\delta }{2},\frac{\delta }{2})\cap Q^{\perp }$ are
both $\frac{\delta }{2}$ ($P^{\perp }$ denotes the polar of $P$). The
holonomy of $O_{2,3,\delta }^{2}$ is generated by hyperbolic rotations of
length $\delta $ around the vertices $P$ and $Q$. The distance $r$ between $P^{\perp }$ and $Q^{\perp }$ is calculated by hyperbolic trigonometry:
\begin{equation*}
\cosh r=\frac{\cosh ^{2}\frac{\delta }{2}+\cos \frac{2\pi }{3}}{\sinh ^{2}
\frac{\delta }{2}}=\frac{x^{2}-\frac{1}{2}}{x^{2}-1}=\cosh d
\end{equation*}
Hence $r=d$.
\end{proof}
\end{theorem}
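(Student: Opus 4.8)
The plan is to mirror the proofs of Theorems~\ref{tgenso3} and~\ref{tgenso12}, the only genuinely new feature being that for $1<x<\infty$ the two generators are hyperbolic translations rather than elliptic rotations, because (as recorded before the statement) the vectors $A^{-}$ and $B^{-}$ are now space-like. First I would realise $O_{2,3,\delta }^{2}$ as an explicit edge-identification space. Since $A^{-}$ and $B^{-}$ are space-like, their ``centres'' $P$ and $Q$ are ultrainfinite points, each carrying a polar geodesic $P^{\perp },Q^{\perp }$. I take the generalized hyperbolic triangle $T(\frac{2\pi }{3},\frac{\delta }{2},\frac{\delta }{2})$ of Figure~\ref{s23alfay5}: an isosceles figure with one ordinary vertex $R$ of angle $\frac{2\pi }{3}$ whose two opposite sides are truncated perpendicularly by $P^{\perp }$ and $Q^{\perp }$, the truncating segments $T\cap P^{\perp }$ and $T\cap Q^{\perp }$ each having length $\frac{\delta }{2}$. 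Gluing the two equal sides issuing from $R$ then yields the open disk $O^{2}$ carrying a $3$-fold cone point at the image of $R$, a $2$-fold cone point at the midpoint $M$ of the truncated base, and a single geodesic end whose core curve---the image of the two identified truncation segments of length $\frac{\delta }{2}$---is a closed geodesic of length $\delta $.

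Next I would identify the holonomy. The identification makes $O_{2,3,\delta }^{2}$ a complete hyperbolic orbifold whose holonomy is generated by the two hyperbolic rotations of translation length $\delta $ about the ultrainfinite vertices $P$ and $Q$, i.e.\ by the hyperbolic translations of length $\delta $ along $P^{\perp }$ and $Q^{\perp }$. By construction their product is the $\frac{2\pi }{3}$-rotation centred at $R$ and the associated involution is the $2$-fold rotation at $M$; this is exactly the pattern $\rho _{x}^{\prime }(F)=\rho _{x}^{\prime }(ab)$ a $3$-fold rotation and $\rho _{x}^{\prime }(D)=\rho _{x}^{\prime }(aba)$ a $2$-fold rotation recorded above, so both groups carry the presentation $C_{2}\ast C_{3}$ through which $\rho _{x}^{\prime }$ factors. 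Hence it suffices to show that the two \emph{generator configurations} are isometric, and for that it is enough to match a single invariant, the distance between the two translation axes.

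The decisive step is therefore one trigonometric identity. I would compute the distance $r$ between the polars $P^{\perp }$ and $Q^{\perp }$ from the truncated triangle. Here the elliptic base angles $\frac{\alpha }{2}$ of Theorem~\ref{tgenso12}, which entered through $\cos \frac{\alpha }{2}$ and $\sin \frac{\alpha }{2}$, are replaced by the truncation lengths $\frac{\delta }{2}$ entering through $\cosh \frac{\delta }{2}$ and $\sinh \frac{\delta }{2}$, while the base side becomes $r$. The law of cosines for this generalized triangle then reads
\begin{equation*}
\cos \tfrac{2\pi }{3}=-\cosh ^{2}\tfrac{\delta }{2}+\sinh ^{2}\tfrac{\delta }{2}\,\cosh r ,
\end{equation*}
and solving for $\cosh r$, then inserting $x=\cosh \frac{\delta }{2}$ (so $x^{2}-1=\sinh ^{2}\frac{\delta }{2}$) together with $\cos \frac{2\pi }{3}=-\frac12$, gives
\begin{equation*}
\cosh r=\frac{\cosh ^{2}\frac{\delta }{2}+\cos \frac{2\pi }{3}}{\sinh ^{2}\frac{\delta }{2}}=\frac{x^{2}-\frac12}{x^{2}-1}=\cosh d ,
\end{equation*}
the right-hand side being precisely the distance $d$ between the polars of the axes of $\rho _{x}^{\prime }(a)$ and $\rho _{x}^{\prime }(b)$ computed just before the statement. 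Thus $r=d$, the two configurations of generators are conjugate in $SO^{0}(1,2)$, and the image of $\rho _{x}^{\prime }$ is isomorphic to the holonomy of $O_{2,3,\delta }^{2}$.

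The point I expect to require the most care is the generalized law of cosines itself: one must justify the hyperideal-vertex trigonometry, namely that truncating the isosceles triangle by the polars $P^{\perp },Q^{\perp }$ with truncation length $\frac{\delta }{2}$ is the correct hyperbolic analogue of prescribing base angles $\frac{\alpha }{2}$, and that the correct invariant is the distance $r$ between the polars rather than a would-be segment joining the ultrainfinite vertices. Concretely this is a right-angled hexagon relation, and the sign in the displayed identity reflects the passage from the vertices to their polars. Once this identity is in hand, the remainder is the same bookkeeping as in the spherical and elliptic-hyperbolic cone-manifold cases already treated.
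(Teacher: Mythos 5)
Your proposal is correct and follows essentially the same route as the paper: realise $O_{2,3,\delta }^{2}$ by identifying the edges of the truncated triangle $T(\frac{2\pi }{3},\frac{\delta }{2},\frac{\delta }{2})$ with ultrainfinite vertices $P,Q$, observe that the holonomy is generated by the hyperbolic rotations of length $\delta $ about $P$ and $Q$, and conclude by matching the distance $r$ between $P^{\perp }$ and $Q^{\perp }$ with the distance $d$ between the polars of $A^{-}$ and $B^{-}$ via the identity $\cosh r=\frac{\cosh ^{2}\frac{\delta }{2}+\cos \frac{2\pi }{3}}{\sinh ^{2}\frac{\delta }{2}}=\frac{x^{2}-\frac{1}{2}}{x^{2}-1}=\cosh d$. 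The only difference is that you derive this identity from the generalized law of cosines (right-angled pentagon relation), with the correct sign convention, whereas the paper simply asserts it as hyperbolic trigonometry.
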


\begin{remark}
The point $R$ (resp. the mid-point between $P^{\perp }$ and $Q^{\perp }$) is
the intersection with $\mathbb{H}^{2}$ of the axis of the $3$-fold (resp. $2$-fold) rotation.
\end{remark}

\subsection{Some particular cases}
\begin{description}
\item[Case1] We have seen that the image of $\rho _{x}^{\prime
}:G(3_{1})\longrightarrow SO(3)$ , $0\leqslant x<\frac{\sqrt{3}}{2}$, is
isomorphic to the holonomy of the 2-dimensional spherical cone-manifold $S_{\frac{2\pi }{2},\frac{2\pi }{3},\alpha }^{2}$. The orbifolds among these
cone-manifolds are $S_{232}$, $S_{233}$, $S_{234}$, $S_{235}$. The
fundamental groups of these orbifolds are the group of direct symmetries of
the equilateral triangle (in $E^{3}$); the tetrahedron; the octahedron; and
the icosahedron, respectively. These groups are isomorphic to, respectively,
$\Sigma _{3}$, $A_{4}$, $\Sigma _{4}$ and $A_{5}$.

\item[Case 2] The image of the almost-irreducible c-representation $\rho_{\frac{\sqrt{3}}{2}}^{\prime }:G(3_{1})\longrightarrow PSL(2,\mathbb{C})$
acts on the Euclidean plane $\mathbb{C}$ as the holonomy of the Euclidean
crystallographic orbifold $S_{236}$.

\item[Case 3] The image of $\rho _{x}^{\prime }$, $\frac{\sqrt{3}}{2}<x<1$,
is a subgroup of $SO^{o}(1,2)$ isomorphic to the holonomy of the
2-dimensional hyperbolic cone-manifold $S_{\frac{2\pi }{2},\frac{2\pi }{3}
,\alpha }^{2}$, where $x=\cos \frac{\alpha }{2}$. The orbifolds among these
cone-manifolds are $S_{23p}$, for all $p\geq 7$. The orbifold $S_{237}$ is
the (orientable) hyperbolic orbifold of minimal area.

\item[Case 4] The image of $\rho _{1}^{\prime }:G(3_{1})\longrightarrow
SO^{0}(1,2)$ acts on the hyperbolic plane $\mathbb{H}^{2}$ as the holonomy
of the hyperbolic open orbifold (with finite volume) $S_{23\infty }$.

\item[Case 5] The image of $\rho _{x}^{\prime }$, $1<x<\infty $, is a
subgroup of $SO^{0}(1,2)$ isomorphic to the holonomy of the 2-dimensional
open hyperbolic orbifold (with infinite volume) $O_{23\delta }$ where $O$ is
an open disk and $\delta $ is the length of the closed geodesic at the end
of $O$, where $x=\cosh \frac{\delta }{2}$.
\end{description}

\begin{theorem}
\label{tsubgrupolibre} Every image of $\rho _{x}^{\prime
}:G(3_{1})\longrightarrow SO^{0}(1,2)$, where $x\geq 1$ contains a two
generator free subgroup of finite index.
\end{theorem}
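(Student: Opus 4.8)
The plan is to reduce the statement to a standard fact about the modular group. For $x\geq 1$ we are precisely in Case 4 ($x=1$) and Case 5 ($x>1$), and in both the computations above show that $\rho_x'(C)=\mathrm{id}$. Hence $\rho_x'$ factors through the quotient $C_2\ast C_3=|F,D;F^3=D^2=1|\cong PSL(2,\mathbb{Z})$, giving an induced homomorphism $\bar\rho_x':C_2\ast C_3\longrightarrow SO^0(1,2)$ whose image coincides with the image of $\rho_x'$. The whole argument therefore amounts to understanding the image of $\bar\rho_x'$.

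First I would record that $\bar\rho_x'$ is injective, so that the image is abstractly isomorphic to $C_2\ast C_3$. This is essentially the content of the geometric identifications already established: in Case 4 the image acts as the holonomy of $S_{2,3,\infty}^2$, that is, it is a conjugate of $PSL(2,\mathbb{Z})$, and in Case 5 (Theorem \ref{tgenso122}) the image is isomorphic to the holonomy of the infinite-volume hyperbolic orbifold $O_{2,3,\delta}^2$. The holonomy of a hyperbolic $2$-orbifold is a discrete faithful representation of its orbifold fundamental group, and for an open disk carrying two cone points of orders $2$ and $3$ together with a single funnel end that group is exactly the free product $C_2\ast C_3$. If one prefers a self-contained verification of faithfulness, it can be obtained by a ping-pong argument: $\bar\rho_x'(D)$ and $\bar\rho_x'(F)$ are elliptic isometries of orders $2$ and $3$ whose fixed points $M$ and $R$ are separated by a definite distance controlled by $\delta$ (through the explicit relation $\cosh d=(x^2-\tfrac12)/(x^2-1)$ computed above), so one can exhibit disjoint ping-pong regions for the two cyclic factors and conclude that no nontrivial reduced word of $C_2\ast C_3$ is sent to the identity.

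Granting faithfulness, the conclusion is immediate from the structure of the modular group. The group $C_2\ast C_3\cong PSL(2,\mathbb{Z})$ is virtually free: its abelianization is $C_2\times C_3\cong C_6$, so its commutator subgroup $[PSL(2,\mathbb{Z}),PSL(2,\mathbb{Z})]$ has index $6$, and by the Kurosh subgroup theorem this torsion-free subgroup is free; an Euler characteristic count, $\chi(C_2\ast C_3)=\tfrac12+\tfrac13-1=-\tfrac16$ so that an index-$6$ subgroup has $\chi=-1=1-r$, forces its rank to be $r=2$. Transporting this subgroup through the isomorphism $\bar\rho_x'$ produces a subgroup of the image that is free on two generators and of index $6$, which is exactly the asserted two-generator free subgroup of finite index.

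The main obstacle I anticipate is the faithfulness step, namely ensuring that $\bar\rho_x'$ is genuinely injective rather than a surjection onto some proper quotient of $C_2\ast C_3$. For $x=1$ this is classical, since the image is literally $PSL(2,\mathbb{Z})$ acting on $\mathbb{H}^2$. For $x>1$ it requires knowing that the two elliptic generators have sufficiently separated fixed points, and this is precisely where the explicit distance formula above enters: it guarantees that the ping-pong (equivalently, the discreteness of the holonomy of $O_{2,3,\delta}^2$) holds uniformly for every $x>1$, so the free product structure is preserved throughout the range.
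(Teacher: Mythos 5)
Your proposal is correct, but its second half takes a genuinely different route from the paper. Both arguments share the first step: the image factors through, and by the holonomy identifications of Case 4 and Theorem \ref{tgenso122} is isomorphic to, $C_{2}\ast C_{3}\cong PSL(2,\mathbb{Z})$; the paper uses this faithfulness silently, while you flag it as the delicate point and offer a ping-pong fallback, which is a point in your favor. Where you diverge: the paper exhibits the finite-index free subgroup \emph{constructively}, via the permutation representation $\Omega$ with $F\mapsto (123)(456)$, $D\mapsto (15)(24)(36)$ into $\Sigma _{6}$, which produces a $6$-fold orbifold covering $O_{2\delta ,2\delta ,2\delta }\rightarrow O_{23\delta }$ by a three-holed sphere; the free rank-2 fundamental group of the cover is then computed explicitly to be generated by $B^{-2}=\rho _{x}^{\prime }(b^{-2})$ and $A^{2}=\rho _{x}^{\prime }(a^{2})$. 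You instead invoke the commutator subgroup of $PSL(2,\mathbb{Z})$, the Kurosh subgroup theorem, and an Euler characteristic count. Note that these are \emph{different} index-6 subgroups: yours is the commutator subgroup, whereas the paper's subgroup $\left\langle A^{2},B^{-2}\right\rangle $ is not, since $a^{2}$ maps to a nontrivial element of the abelianization $C_{6}$. Your route is shorter and relies only on standard structure theory, but it is non-constructive; the paper's explicit generators $\{\rho _{x}^{\prime }(a^{2}),\rho _{x}^{\prime }(b^{-2})\}$ are precisely what is used later in the Margulis-invariant computations for affine deformations, so the constructive detour pays off downstream. One small gap in your write-up: you assert that the commutator subgroup is torsion-free without argument; this follows because every torsion element of $C_{2}\ast C_{3}$ is conjugate into one of the factors and each factor injects into the abelianization $C_{6}$, so the gap is easily repaired.
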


\begin{proof}
Theorem \ref{tgenso122} shows that the image of $\rho _{x}^{\prime
}:G(3_{1})\longrightarrow SO^{0}(1,2)$, $1<x<\infty $, is a subgroup of $
SO^{0}(1,2)$ isomorphic to the holonomy of the 2-dimensional open hyperbolic
orbifold (with infinite volume) $O_{23\delta }$ where $O$ is an open disk
and $\delta $ is the length of the closed geodesic at the end of $O$, where $x=\cosh \frac{\delta }{2}$. Figure \ref{s23alfay5}. The image of $\rho
_{1}^{\prime }:G(3_{1})\longrightarrow SO^{0}(1,2)$, is a subgroup of $SO^{0}(1,2)$ isomorphic to the holonomy of the 2-dimensional open hyperbolic
orbifold (with finite volume) $S_{\frac{2\pi }{2},\frac{2\pi }{3},\infty
}^{2}$ with underlying space a punctured 2-sphere. Figure \ref{s23alfay4}.
Let us denote $S_{\frac{2\pi }{2},\frac{2\pi }{3},\infty }^{2}$ by $O_{230}$
to unify notation.

Consider the homomorphism
\begin{equation*}
\begin{array}{cccc}
\Omega : & \pi _{1}^{o}(O_{23\delta })=|F,D;F^{3}=D^{2}=1| & \longrightarrow
& \Sigma _{6} \\
& F & \rightarrow & (123)(456) \\
& D & \rightarrow & (15)(24)(36)
\end{array}
\end{equation*}
where $\delta \geq 0$. It defines a covering of orbifolds
\begin{equation*}
p_{\Omega }:O_{2\delta ,2\delta ,2\delta }\overset{6:1}{\longrightarrow }
O_{23\delta }
\end{equation*}
where $O_{2\delta ,2\delta ,2\delta }$ is a 2-sphere with 3 holes and the
length of the closed geodesic at every hole is $2\delta $ if $\delta >0$ and
$O_{0,0,0}$ is a 3-punctured 2-sphere. Figure \ref{fo2d2d2d}.

\begin{figure}[ht]
\epsfig{file=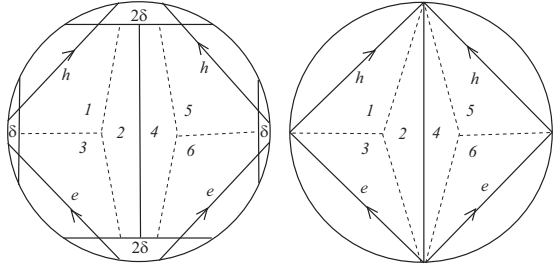,height=3.5cm} \caption{The hyperbolic manifold $O_{2
\delta ,2\delta ,2\delta }$, $\delta \geq 0$.}\label{fo2d2d2d}
\end{figure}

The fundamental group of $O_{2\delta
,2\delta ,2\delta }$, $\pi _{1}^{o}(O_{2\delta ,2\delta ,2\delta })$, is a
free subgroup of $\rho _{x}^{\prime }(G(3_{1}))$ generated by $g_{1}=F^{-2}DFD^{-1}=FDFD$ and $g_{2}=F^{-1}DF^{2}D^{-1}=F^{-1}DF^{-1}D$. We
can write the two generators of $\pi _{1}^{o}(O_{2\delta ,2\delta ,2\delta
}) $ in terms of $\rho _{x}^{\prime }(a)=A$ and $\rho _{x}^{\prime }(b)=B$ ,
because $F=AB$ and $D=ABA=BAB\mathbf{.}$ Then
\begin{eqnarray*}
g_{1} &=&FDFD=\underset{B^{-1}}{\underbrace{ABABA}}\underset{B^{-1}}{
\underbrace{ABABA}}\mathbf{=}B^{-1}B^{-1} \\
g_{2} &=&F^{-1}DF^{-1}D=B^{-1}A^{-1}ABAB^{-1}A^{-1}ABA\mathbf{=}AA
\end{eqnarray*}
We conclude that the subgroup of $\rho _{x}^{\prime }(G(3_{1}))$ generated
by $\{\rho _{x}^{\prime }(b^{-2}),$ $\rho _{x}^{\prime }(a^{2})\}$ is an
index 6 free subgroup of rank 2 generated by $\{B^{-2},A^{2}\}$.
\end{proof}

\section{The representations of $G(3_{1})$ in $A(H)$}

In this section we obtain all the representations of the trefoil knot group
in the affine isometry group $A(H)$ of a quaternion algebra $H$. They
include all the representations in the 3-dimensional affine Euclidean
isometries $\mathcal{E}(E^{3})$ and all the representations in the
3-dimensional affine Lorentz isometries $\mathcal{L}(E^{1,2})$. The
representations in the affine isometry group of a quaternion algebra are
affine deformations of representations in the unit quaternions group of $H$,
computed in the above section.

Let
\begin{equation*}
\begin{array}{llll}
\rho : & G & \longrightarrow & A(H) \\
& a & \rightarrow & \rho (a)=(sA^{-},A) \\
& b & \rightarrow & \rho (b)=(sB^{-}+(A^{-}B^{-})^{-},B)
\end{array}
\end{equation*}
be a representation of $G$ in the affine group of a quaternion algebra $H$.
The composition of $\rho $ with the projection $\pi _{2}$ on the second
factor of $A(H)=H_{0}\rtimes U_{1}$ gives the linear part of $\rho $ and it
is a representation $\widehat{\rho }$ on the group of unitary quaternions.
\begin{equation*}
\begin{array}{llll}
\widehat{\rho }=\pi _{2}\circ \rho : & G & \longrightarrow & U_{1} \\
& a & \rightarrow & A \\
& b & \rightarrow & B
\end{array}
\end{equation*}
The composition of $\rho $ with the projection $\pi _{1}$ on the first
factor of $A(H)=H_{0}\rtimes U_{1}$ gives the translational part of $\rho :$
\begin{equation*}
\begin{array}{llll}
v_{\rho }=\pi _{1}\circ \rho : & G & \longrightarrow & H_{0} \\
& a & \rightarrow & sA^{-} \\
& b & \rightarrow & sB^{-}+(A^{-}B^{-})^{-}
\end{array}
\end{equation*}
Therefore $\widehat{\rho }$ corresponds to a point in the character variety $V(\mathcal{I}_{G})$ of representations of $G$ and it is determined by the
characters $x$ and $y$. We are interested in the classes of affine
deformations up to conjugation in the equiform group $\mathcal{E}
q(H)=H_{0}\rtimes U$, where $U$ is the group of invertible elements . Each
of these classes is determined by the parameter $s.$

The relation \eqref{erel2} in the case of the Trefoil knot group shows that
the parameter $s$ satisfies the equation
\begin{equation}
4x^{2}+4sx-3=0  \label{epoly2}
\end{equation}

\begin{theorem}
\bigskip Every representation $\rho _{x}:G(3/1)\longrightarrow
A(H)=H_{0}\rtimes U_{1}$ defined by%
\begin{equation*}
\begin{array}{l}
\rho _{x}(a)=(sA^{-},A) \\
\rho _{x}(b)=((sB^{-}+(A^{-}B^{-})^{-},B)
\end{array}
\end{equation*}
where $A$ and $B$ are independent unit quaternions, factors through the
group $C_{2}\ast C_{3}.$
\end{theorem}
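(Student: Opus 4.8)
The plan is to show that the affine representation $\rho_x$ factors through $C_2 \ast C_3 = |F, D; F^3 = D^2 = 1|$ by verifying that the images $\rho_x(F)^3$ and $\rho_x(D)^2$ are both the identity element $(0, 1)$ of $A(H) = H_0 \rtimes U_1$. Recall from \eqref{genfd} that in $G(3_1)$ we have $F = ab$ and $D = aba$, with the central element $C = F^3 = D^2$. The group $G(3_1) = |F, D; F^3 = D^2|$ surjects onto $C_2 \ast C_3$ precisely by sending the central element $C$ to the identity, so it suffices to prove that $\rho_x(C) = (0, 1)$.

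\emph{First} I would compute the linear part. Since $\widehat{\rho}_x = \pi_2 \circ \rho_x$ sends $a \mapsto A$ and $b \mapsto B$ with $(A, B)$ an irreducible pair of conjugate unit quaternions realizing a point $(x,y)$ on the parabola $2x^2 - 2y - 1 = 0$, we have that $\widehat{\rho}_x$ is a genuine representation of $G(3_1)$ into $U_1$. As noted in the discussion preceding the statement, $\rho_x(C)$ has linear part $\widehat{\rho}_x(C)$, and because $\widehat{\rho}_x(C)$ commutes with both $A$ and $B$ (which generate $H$ as an algebra in the irreducible case), it lies in the center of $H$ and hence equals $\pm 1$. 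The sign is pinned down by the defining relation $2x^2 - 2y - 1 = 0$ together with $y = -(A^-B^-)^+$: a direct trace computation of $(AB)^3 = C$ gives $\widehat{\rho}_x(C) = +1$, so the linear part of $\rho_x(C)$ is $1$.

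\emph{Next} I would handle the translational part, which is the genuinely new content in the affine setting. Writing $\rho_x(a) = (sA^-, A)$ and $\rho_x(b) = (sB^- + (A^-B^-)^-, B)$, the translational part of $\rho_x(C) = \rho_x(w)$ for any word $w$ representing $C$ is governed by the Fox-derivative formula \eqref{erelfox}, namely
\begin{equation*}
\pi_1(\rho_x(C)) = \tfrac{\partial C}{\partial a}\big|_\phi \circ v + \tfrac{\partial C}{\partial b}\big|_\phi \circ u,
\end{equation*}
where $v, u$ are the translational parts assigned to $a, b$ and $\phi(a) = A$, $\phi(b) = B$. The key point is that the parameter $s$ was \emph{chosen} to satisfy equation \eqref{epoly2}, $4x^2 + 4sx - 3 = 0$, which is exactly relation \eqref{erel2} for the trefoil; this is the condition forcing $\pi_1(\rho_x(C)) = 0$. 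Thus with the linear part equal to $1$ and the translational part equal to $0$, we obtain $\rho_x(C) = (0,1)$, the identity of $A(H)$.

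\emph{The main obstacle} I expect is the bookkeeping in the translational computation: one must correctly evaluate the Fox derivatives of the word $C = (ab)^3$ (or equivalently $(aba)^2$) in the semidirect-product action, keeping track of how $c(A)$ and $c(B)$ twist the translation vectors $sA^-$ and $sB^- + (A^-B^-)^-$, and then verify that the resulting vector vanishes identically as a consequence of \eqref{epoly2}. Since relation \eqref{erel2} for this word is, by construction, what produced \eqref{epoly2}, the vanishing is automatic once $s$ satisfies that equation; the care needed is only in confirming the two computations (via $F$ and via $D$) are consistent. Having established $\rho_x(C) = (0,1)$, the universal property of the quotient $G(3_1)/\langle\!\langle C \rangle\!\rangle \cong C_2 \ast C_3$ immediately gives the desired factorization.
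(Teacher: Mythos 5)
Your reduction of the theorem to the single identity $\rho_x(C)=(0,1)$, and your observation that the linear part $\widehat{\rho}_x(C)$ commutes with $A$ and $B$ and hence lies in the center of $H$, so equals $\pm 1$, coincide with the paper's proof. After that both of your concluding steps diverge from the paper, and both have problems. The serious gap is in the translational part, which you yourself call the genuinely new content of the affine setting. You claim its vanishing is ``automatic'' because \eqref{epoly2} is ``by construction'' the Fox-derivative condition \eqref{erel2} for the word $C=(ab)^3$. It is not: \eqref{erel2}, and hence \eqref{epoly2}, is the Fox-derivative condition for the relator $aba(bab)^{-1}$ of the presentation, i.e.\ exactly the condition that $\rho_x$ be a well-defined homomorphism of $G(3_1)$ into $A(H)$. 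The element $C=(ab)^3$ is not a relator; it is a nontrivial (central) element of $G(3_1)$, and nothing in the construction of $\mathcal{I}_{aG}^{c}$ forces a word that is nontrivial in $G$ to map to the identity. So $\pi_1(\rho_x(C))=0$ requires an actual argument, and this is precisely what the paper supplies and you omit: since $C$ is central and $\rho_x$ is a homomorphism, $\rho_x(C)=(v,\pm 1)$ commutes with $\rho_x(a)=(sA^-,A)$ and with $\rho_x(b)$; multiplying out in $H_0\rtimes U_1$ gives $v=c(A)v$ and $v=c(B)v$, so $v$ lies in $\mathrm{span}(A^-)\cap\mathrm{span}(B^-)=\{0\}$ by independence. (Alternatively you could compute the Fox derivative of $(ab)^3$ explicitly and verify that it vanishes modulo \eqref{epoly1} and \eqref{epoly2}; that is a legitimate route, but it is a computation, not a tautology.)

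The second problem is the sign of the linear part: the ``direct trace computation'' you appeal to gives the opposite of what you assert. On the curve \eqref{epoly1} one has $(AB)^+=x^2+(A^-B^-)^+=x^2-y=\tfrac12$, and any unit quaternion $Q$ with $Q^+=\tfrac12$ satisfies $Q^3=-1$ (indeed $N(Q^-)=\tfrac34$, so $Q^2=-\tfrac12+Q^-$ and $Q^3=-\tfrac14+(Q^-)^2=-\tfrac14-\tfrac34=-1$). Hence $\widehat{\rho}_x(C)=(AB)^3=-1$, not $+1$, as one can also check on the explicit matrices of Case 2. To be fair, the paper's own proof is delicate at exactly this point: it asserts that the only element of $U_1$ commuting with $A$ and $B$ is $1$, whereas the centralizer of an independent pair is $\{\pm 1\}$; the commutation argument for $v$ is unaffected because $c(-1)=\mathrm{id}$, and the factorization statement is then correct at the level of the induced affine action on $H_0$, where $(0,-1)$ acts trivially. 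But as written, your step ``a direct trace computation gives $+1$'' would fail if carried out, so it cannot be used to pin down the sign, and your proposal as a whole needs both this correction and the missing centrality argument to close.
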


\begin{proof}
The group $G(3_{1})$ has also the presentation $\left\vert
F,D;F^{3}=D^{2}\right\vert $ (Recall \eqref{genfd}.)

The group $C_{2}\ast C_{3}.$ is a quotient of $G(3_{1})=\left\vert
F,D;F^{3}=D^{2}\right\vert $
\begin{equation*}
q:G(3/1)=\left\vert F,D;F^{3}=D^{2}\right\vert \longrightarrow C_{2}\ast
C_{3}=\left\vert F,D;F^{3}=D^{2}=1\right\vert
\end{equation*}
Then to prove that there exists a homomorphism
\begin{equation*}
\lambda _{x}:C_{2}\ast C_{3}=\left\vert F,D;F^{3}=D^{2}=1\right\vert
\longrightarrow A(H)
\end{equation*}
such that $\lambda _{x}\circ q=\rho _{x}$ it is enough to show that $\rho
_{x}(D^{2})=(0,1)$. Recall that the element $C=D^{2}=F^{3}$ belongs to the
center of the group $G(3_{1})$, therefore $\rho _{x}(C)$ commutes with every
element of $\rho _{x}(G(3_{1})),$ in particular with $\rho _{x}(a)$ and $
\rho _{x}(b)$. Let us consider first the linear part $\widehat{\rho }_{x}=\pi
_{2}\circ \rho _{x}:G(3_{1})\longrightarrow U_{1}$. Because $A$ and $B$ are
independent, $A^{-}\neq \pm B^{-}$, the only element of $U_{1}$ commuting
with $A$ and $B$ is the identity 1. Therefore $\widehat{\rho }_{x}(C)=1$,
and $\rho _{x}(C)$ is a translation. If $\rho _{x}(C)=(v,1)$ and $\rho
_{x}(a)=(sA^{-},A)$ commute then
\begin{eqnarray*}
(v,1)(sA^{-},A) &=&(sA^{-},A)(v,1)\Longrightarrow
(v+sA^{-},A)=(sA^{-}+A\circ v,A) \\
&\Longrightarrow &v=A\circ v=AvA^{-1}
\end{eqnarray*}
we deduce that $v=0$ or $v$ and $A^{-}$ are dependent. An analogous
computation with $\rho _{x}(b)$ yields $v=0$ or $v$ and $B^{-}$ are
dependent. As $A$ and $B$ are independent, $v=0.$ We have proved that $\rho
_{x}(C)=(0,1)$.
\end{proof}

Moreover, every homomorphism $\lambda :C_{2}\ast C_{3}=\left\vert
F,D;F^{3}=D^{2}=1\right\vert \longrightarrow A(H)$ induces a representation $
\rho :G(3/1)\longrightarrow A(H)$ such that $\rho =\lambda \circ q$.
Therefore we can apply the results on representations of $G(3_{1})$ in $A(H)$
to representations of $C_{2}\ast C_{3}$ in $A(H).$

\begin{corollary}
\label{cor1}Every non cyclic subgroup $S$ in $A(H)$ generated by two
isometries $\mu \neq 1,$ $\upsilon \neq 1$ such that $\mu ^{2}=\upsilon
^{3}=1$ is necessarily generated by two conjugate elements in $A(H)$.
\end{corollary}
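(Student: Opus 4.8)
The plan is to realize the abstract group $C_2 \ast C_3 = \left\vert F,D;F^3=D^2=1\right\vert$ as the universal object and then transfer its structure to the subgroup $S$. Given a non-cyclic subgroup $S \subset A(H)$ generated by isometries $\mu, \upsilon$ with $\mu^2 = \upsilon^3 = 1$, there is a surjective homomorphism $\lambda : C_2 \ast C_3 \longrightarrow S$ sending $F \mapsto \upsilon$ and $D \mapsto \mu$; this is well-defined precisely because the only relations in $C_2 \ast C_3$ are the torsion relations, which $\mu$ and $\upsilon$ satisfy by hypothesis. Composing with the quotient $q : G(3_1) = \left\vert F,D;F^3=D^2\right\vert \longrightarrow C_2 \ast C_3$ from the preceding theorem, we obtain a representation $\rho = \lambda \circ q : G(3_1) \longrightarrow A(H)$ whose image is exactly $S$.

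The key step is then to invoke the earlier analysis: in the presentation $G(3_1) = \left\vert a,b;aba=bab\right\vert$, the meridional generators $a$ and $b$ are conjugate in $G(3_1)$, hence their images under any representation are conjugate elements of $A(H)$. Under $q$ these generators satisfy $F = ab$ and $D = aba$, so that in terms of the images $\alpha := \rho(a)$ and $\beta := \rho(b)$ we recover $\upsilon = \alpha\beta$ and $\mu = \alpha\beta\alpha$. Thus $S$ is generated by the conjugate pair $\{\alpha, \beta\}$, giving precisely the desired conclusion that $S$ is generated by two conjugate elements of $A(H)$.

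The one point requiring care is to verify that $\alpha$ and $\beta$ genuinely generate $S$ and not merely a subgroup of it. Since $\upsilon = \alpha\beta$ and $\mu = \alpha\beta\alpha$ lie in $\langle \alpha, \beta\rangle$, the subgroup generated by $\{\alpha,\beta\}$ contains the generating set $\{\mu,\upsilon\}$ of $S$, so $S \subseteq \langle \alpha,\beta\rangle$; conversely $\alpha = \mu\upsilon^{-1}$ and $\beta = \upsilon\alpha^{-1} = \upsilon^{2}\mu^{-1}$ (inverting the relations $\upsilon=\alpha\beta$, $\mu=\alpha\beta\alpha$) show $\alpha,\beta \in S$, so the two subgroups coincide. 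The main obstacle I anticipate is not the group theory but confirming that the recovered generators $\alpha,\beta$ are conjugate \emph{within} $A(H)$ rather than merely in some larger overgroup; this is guaranteed because conjugacy of $a$ and $b$ holds already inside $G(3_1)$ and is preserved by the homomorphism $\rho$, so the conjugating element is $\rho$ of the word in $G(3_1)$ that conjugates $a$ to $b$.
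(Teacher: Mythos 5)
Your proof is correct and takes essentially the same approach as the paper: define $\lambda :C_{2}\ast C_{3}\longrightarrow A(H)$ with image $S$, compose with the quotient $q:G(3_{1})\longrightarrow C_{2}\ast C_{3}$, and observe that the conjugate generators $a,b$ of $G(3_{1})$ map to conjugate generators $\rho (a),\rho (b)$ of $S$. (One minor slip in your final, logically redundant verification: the recovery formulas are transposed --- in $S$ one has $\upsilon ^{-1}\mu =\alpha $ and $\mu ^{-1}\upsilon ^{2}=\beta $, whereas your $\mu \upsilon ^{-1}$ and $\upsilon ^{2}\mu ^{-1}$ equal $\beta $ and $\alpha $ respectively, using the relation $\alpha \beta \alpha =\beta \alpha \beta $; in any case $\langle \alpha ,\beta \rangle =\rho (G(3_{1}))=\lambda (C_{2}\ast C_{3})=S$ follows automatically from the surjectivity of $q$ and of $\lambda $ onto $S$.)
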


\begin{proof}
The subgroup $S$ is the image of a representation
\begin{equation*}
\begin{array}{cccc}
\lambda : & C_{2}\ast C_{3}=\left\vert F,D;F^{3}=D^{2}=1\right\vert &
\longrightarrow & A(H) \\
& D & \rightarrow & \mu \\
& F & \rightarrow & \upsilon
\end{array}
\end{equation*}
Then it defines a representation $\rho =\lambda \circ
q:G(3/1)\longrightarrow A(H)$ with the same image $S.$ As $G(3_{1})$ is
generated by two conjugate elements $a$ and $b$, the group $S$ is generated by
the two conjugate elements $\rho (a)$ and $\rho (b)$.
\end{proof}

\subsection{The Hamilton quaternion algebra $\mathbb{H}=\left( \frac{-1,-1}{\mathbb{R}}\right) $}
Let us study first the affine deformations of representations corresponding to
points in the character variety belonging to Case 1: $x\in (\frac{-\sqrt{3}}{
2},\frac{\sqrt{3}}{2})\Longleftrightarrow \left\{
1-x^{2}>0,(1-x^{2})^{2}>y^{2}\right\} ,$ where the quaternion algebra is the
Hamilton quaternion algebra $\mathbb{H}=\left( \frac{-1,-1}{\mathbb{R}}
\right) $. Therefore $U_{1}=S^{3}$, $H_{0}=E^{3}$.

Recall that in this case, the geometrical meaning of parameters $
x,y,u,s,\sigma $ is the following.

\begin{figure}[ht]
\epsfig{file=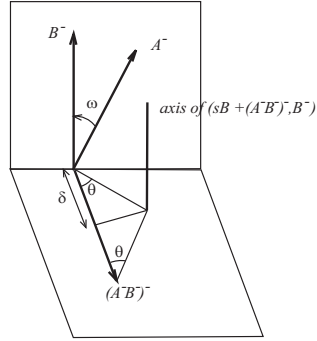,height=4.5cm} \caption{The geometrical meaning of parameters in $E^{3}$.}\label{fdelta}
\end{figure}

Let $\alpha $ be the angle of the right rotation around the axis $A^{-}$
corresponding to the action of $A=A^{+}+A^{-}$ in $H_{0}\cong E^{3}$. Let $
\omega $ be the angle defined by $A^{-}$ and $B^{-}$. See Figure \ref{fdelta}
. Then
\begin{eqnarray}
x &=&A^{+}=B^{+}=\cos \frac{\alpha }{2}  \notag \\
u &=&N(A^{-})=N(B^{-})=1-x^{2}=\sin ^{2}\frac{\alpha }{2}  \notag \\
y &=&-(A^{-}B^{-})^{+}=\left\langle A^{-},B^{-}\right\rangle =u\cos \omega
\label{eomega}
\end{eqnarray}
The shift $\sigma $ of the element $(sA^{-},A)$ or any element of $A(H)$
conjugate to $(sA^{-},A)$ is given by
\begin{equation}
\sigma =s\sqrt{u}=s\sqrt{1-x^{2}}  \label{esigma}
\end{equation}
The distance $\delta $ between the axes of $(sA^{-},A)$ and $
(sB^{-}+(A^{-}B^{-})^{-},B)$ is half of the length of the vector $
(A^{-}B^{-})^{-}$. See Figure \ref{fdelta}.
\begin{equation}
N((A^{-}B^{-})^{-})=-y^{2}+u^{2}\quad \Longrightarrow \quad \delta =\frac{
\sqrt{u^{2}-y^{2}}}{2}  \label{edelta}
\end{equation}

The following Theorem gives the affine deformations of the subgroups of $
SO(3,\mathbb{R})$ which are the image of representations of $G(3_{1}),$ or
equivalently the affine deformations of the holonomies of the spherical
conemanifold $S_{\frac{2\pi }{2},\frac{2\pi }{3},\alpha }^{2}$, $\frac{2\pi
}{6}<\alpha <\frac{10\pi }{6}$.

\begin{theorem}
\label{tgeneucl} For each $x\in (-\sqrt{3}/2,\sqrt{3}/2),$ there exists a
representation $\rho _{x}:G(3/1)\longrightarrow \mathcal{E}(E^{3})$ unique
up to conjugation in $\mathcal{E}(E^{3})$ such that
\begin{equation*}
\begin{array}{l}
\rho _{x}(a)=(sA^{-},A) \\
\rho _{x}(b)=((sB^{-}+(A^{-}B^{-})^{-},B)
\end{array}
\end{equation*}
where
\begin{equation*}
\begin{array}{l}
A=x+\frac{2x^{2}-1}{2\sqrt{1-x^{2}}}i+\frac{1}{2}\sqrt{\frac{3-4x^{2}}{
1-x^{2}}}j \\
B=x+\sqrt{1-x^{2}}i,\quad \sqrt{1-x^{2}}>0
\end{array}
\end{equation*}
In affine linear notation, where $\left\{ X,Y,Z\right\} $ is the coordinate
system associated to the basis $\left\{ -ij,j,i\right\} $
\begin{equation*}
\begin{array}{l}
\rho _{x}(a)=M_{x}(a)\left(
\begin{array}{c}
X \\
Y \\
Z \\
1
\end{array}
\right) =\left(
\begin{array}{c}
X^{\prime } \\
Y^{\prime } \\
Z^{\prime } \\
1
\end{array}
\right) \\
\rho _{x}(b)=M_{x}(b)\left(
\begin{array}{c}
X \\
Y \\
Z \\
1
\end{array}
\right) =\left(
\begin{array}{c}
X^{\prime } \\
Y^{\prime } \\
Z^{\prime } \\
1
\end{array}
\right)
\end{array}
\end{equation*}
where
\begin{equation}
M_{x}(a)=\left(
\begin{array}{cccc}
2x^{2}-1 & \frac{x-2x^{3}}{\sqrt{1-x^{2}}} & x\sqrt{\frac{4x^{2}-3}{x^{2}-1}}
& 0 \\
\frac{x(2x^{2}-1)}{\sqrt{1-x^{2}}} & \frac{-4x^{4}+2x^{2}+1}{2-2x^{2}} &
\frac{(1-2x^{2})\sqrt{3-4x^{2}}}{2(x^{2}-1)} & \frac{\sqrt{\left(
3-4x^{2}\right) ^{3}}}{8x\sqrt{1-x^{2}}} \\
-x\sqrt{\frac{4x^{2}-3}{x^{2}-1}} & \frac{(1-2x^{2})\sqrt{3-4x^{2}}}{
2(x^{2}-1)} & \frac{1-2x^{2}}{2x^{2}-2} & \frac{-8x^{4}+10x^{2}-3}{8x\sqrt{
1-x^{2}}} \\
0 & 0 & 0 & 1
\end{array}
\right)  \label{emxa}
\end{equation}
\begin{equation}
M_{x}(b)=
\begin{pmatrix}
2x^{2}-1 & -2x\sqrt{1-x^{2}} & 0 & -\frac{1}{2}\sqrt{3-4x^{2}} \\
2x\sqrt{1-x^{2}} & 2x^{2}-1 & 0 & 0 \\
0 & 0 & 1 & \frac{(3-4x^{2})\sqrt{1-x^{2}}}{4x} \\
0 & 0 & 0 & 1
\end{pmatrix}
\label{emxb}
\end{equation}
The distance $\delta $ and the angle $\omega $ between the axis of $\rho
_{x}(a)$ and $\rho _{x}(b)$ are given by
\begin{eqnarray*}
\delta &=&\frac{\sqrt{3-4x^{2}}}{4} \\
\cos \omega &=&\frac{2x^{2}-1}{2-2x^{2}}
\end{eqnarray*}
The shift $\sigma $ is
\begin{equation*}
\sigma =(\frac{3}{4x}-x)\sqrt{1-x^{2}}
\end{equation*}

\begin{proof}
The values of $A$ and $B$ are given by \eqref{ecase1}. From the polynomial \eqref{epoly2}
 we obtain the value of $s$
\begin{equation*}
s=\frac{3-4x^{2}}{4x}
\end{equation*}
The associated representation $\rho _{x}:G(3/1)\longrightarrow \mathcal{E}
(E^{3})$  such that
\begin{equation*}
\begin{array}{l}
\rho _{x}(a)=(sA^{-},A) \\
\rho _{x}(b)=((sB^{-}+(A^{-}B^{-})^{-},B)
\end{array}
\end{equation*}
is unique up to conjugation in $\mathcal{E}(E^{3})$ (\cite[Th.5]{HLM2009}) and $(A^{-}B^{-})^{-}=-\sqrt{(1-x^{2})-y^{2}}ij$. The matrices $M_{x}(a)$ and $M_{x}(b)$
as affine transformations are given by
\begin{equation*}
M_{x}(a)=
\begin{pmatrix}
m_{x}(-1,-1;x,\frac{y}{\sqrt{1-x^{2}}},\frac{\sqrt{(1-x^{2})^{2}-y^{2}}}{
\sqrt{1-x^{2}}},0) &
\begin{array}{c}
0 \\
s\frac{\sqrt{(1-x^{2})^{2}-(y)^{2}}}{\sqrt{1-x^{2}}} \\
s\frac{y}{\sqrt{1-x^{2}}}
\end{array}
\\
\begin{array}{ccc}
\quad 0\quad  & \quad 0\quad  & \quad 0\quad
\end{array}
& 1
\end{pmatrix}
\end{equation*}
\begin{equation*}
M_{x}(b)=
\begin{pmatrix}
m_{x}(-1,-1;x,\sqrt{1-x^{2}},0,0) &
\begin{array}{c}
-\sqrt{(1-x^{2})^{2}-y^{2}} \\
0 \\
s\sqrt{1-x^{2}}
\end{array}
\\
\begin{array}{ccc}
\quad 0\quad  & \quad 0\quad  & \quad 0\quad
\end{array}
& 1
\end{pmatrix}
\end{equation*}
The values of $\delta $, $\cos \omega $ and $\sigma $ are given by \eqref{edelta}, \eqref{eomega} and \eqref{esigma}.
\end{proof}
\end{theorem}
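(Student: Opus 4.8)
The plan is to read off the linear data from Case~1, fix the one free affine parameter $s$ from the homomorphism condition, appeal to the uniqueness statement of \cite[Th.5]{HLM2009}, and then carry out the (essentially clerical) computation that produces the matrices and the three geometric quantities.

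First I would set $A$ and $B$ as in \eqref{ecase1}, so that $x=A^{+}=B^{+}$, $u=1-x^{2}$ and $y=\tfrac{2x^{2}-1}{2}$. Because Case~1 is the irreducible region, $\{A^{-},B^{-},(A^{-}B^{-})^{-}\}$ is a basis of $H_{0}=E^{3}$; in particular $A^{-}\neq\pm B^{-}$, so the pair $(A,B)$ is irreducible and the affine Ansatz $\rho_{x}(a)=(sA^{-},A)$, $\rho_{x}(b)=(sB^{-}+(A^{-}B^{-})^{-},B)$ is legitimate. Requiring $\rho_{x}$ to be a homomorphism is exactly relation \eqref{erel2}, which for the trefoil reduces to the single equation \eqref{epoly2}, namely $4x^{2}+4sx-3=0$; solving yields $s=\tfrac{3-4x^{2}}{4x}$, well defined for $x\neq 0$ in the given range. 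With $s$ so determined, \cite[Th.5]{HLM2009} gives both the existence and the uniqueness up to conjugation in $\mathcal{E}(E^{3})$, which disposes of those two assertions at once.

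Next I would build the $4\times 4$ affine matrices. The linear blocks are nothing but the rotation matrices $m_{x}(a)$, $m_{x}(b)$ already computed in Case~1, so only the last (translational) columns are new: these are the coordinates in the basis $\{-ij,j,i\}$ of $sA^{-}$ and of $sB^{-}+(A^{-}B^{-})^{-}$, respectively. Here $(A^{-}B^{-})^{-}=-\sqrt{u^{2}-y^{2}}\,ij=-\tfrac{\sqrt{3-4x^{2}}}{2}\,ij$ supplies the $X$-entry $-\tfrac{1}{2}\sqrt{3-4x^{2}}$ of $M_{x}(b)$, while scaling the $i$- and $j$-components of $A^{-}$ and $B^{-}$ by $s$ and simplifying (for instance $(3-4x^{2})(2x^{2}-1)=-8x^{4}+10x^{2}-3$) produces the remaining entries of \eqref{emxa} and \eqref{emxb}.

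Finally the three invariants drop out of the formulas established just before the theorem. By \eqref{eomega}, $\cos\omega=y/u=\tfrac{2x^{2}-1}{2-2x^{2}}$; a one-line computation gives $u^{2}-y^{2}=\tfrac{3-4x^{2}}{4}$, so \eqref{edelta} yields $\delta=\tfrac{\sqrt{3-4x^{2}}}{4}$; and \eqref{esigma} gives $\sigma=s\sqrt{1-x^{2}}=\bigl(\tfrac{3}{4x}-x\bigr)\sqrt{1-x^{2}}$. The substantive content---the value of $s$ and the existence/uniqueness---is thus very short. I expect the main obstacle to be purely bookkeeping: converting the quaternionic translational parts $sA^{-}$ and $sB^{-}+(A^{-}B^{-})^{-}$ into the correct $\{-ij,j,i\}$-columns with every sign consistent, and simplifying the rational-in-$x$ matrix entries; the only genuinely delicate point is the degenerate value $x=0$, where \eqref{epoly2} has no solution and the statement must be read with $x\neq 0$.
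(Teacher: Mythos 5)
Your proposal is correct and follows essentially the same route as the paper's own proof: read $A,B$ from \eqref{ecase1}, solve \eqref{epoly2} for $s=\tfrac{3-4x^{2}}{4x}$, invoke \cite[Th.5]{HLM2009} for existence and uniqueness up to conjugation in $\mathcal{E}(E^{3})$, assemble the affine matrices from the Case~1 linear blocks plus the translational columns $sA^{-}$ and $sB^{-}+(A^{-}B^{-})^{-}$, and obtain $\delta$, $\cos\omega$, $\sigma$ from \eqref{edelta}, \eqref{eomega}, \eqref{esigma}. Your caveat about $x=0$ is also consistent with the paper, which records that degeneracy separately in Remark~\ref{rro0}.
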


\begin{remark}
\label{rro0}The representation
\begin{equation*}
\begin{array}{cccc}
\widehat{\rho }_{0}: & G(3/1) & \longrightarrow & S^{3} \\
& a & \rightarrow & -\frac{1}{2}i+\frac{\sqrt{3}}{2}j \\
& b & \rightarrow & i
\end{array}
\end{equation*}
does not have any affine deformation, because the polynomial \eqref{epoly2} is
never zero for $x=0$.
\end{remark}

The following theorem states the classification up to conjugation of the non
cyclic subgroups of $\mathcal{E}(E^{3})$ generated by two isometries $\mu
\neq 1,$ $\upsilon \neq 1$ such that $\mu ^{2}=\upsilon ^{3}=1$.

\begin{theorem}
\label{tposiblesH} Every non cyclic subgroup $S$ in the group of direct
Euclidean isometries $\mathcal{E}(E^{3})$ generated by two isometries $\mu
\neq 1,$ $\upsilon \neq 1$ such that $\mu ^{2}=\upsilon ^{3}=1$ is one of
the following

\begin{enumerate}
\item [1.] If $S$ has a fixed point, then $S$ is conjugate in $\mathcal{E}(E^{3})$
to the holonomy of the 2-dimensional spherical conemanifold $S_{\frac{2\pi }{
2},\frac{2\pi }{3},\alpha }^{2}$ , $\frac{2\pi }{6}<\alpha <\frac{10\pi }{6}$
.

\item [2.] If $S$ has no fixed points, then three cases are possible

\begin{enumerate}
\item [2.a)] $S$ is conjugate in $\mathcal{E}(E^{3})$ to the subgroup generated by $
M_{x}(a)$ and $M_{x}(b)$, $x\in (0,\sqrt{3}/2).$ See \eqref{emxa} and \eqref{emxb}.

\item [2.b)]$S$ is conjugate in $\mathcal{E}(E^{3})$ to the natural extension to $
\mathcal{E}(E^{3})$ of the holonomy of the 2-dimensional Euclidean orbifold $
S_{236}^{2}$.

\item [2.c)]$S$ is conjugate in $\mathcal{E}(E^{3})$ to the Euclidean
crystallographic group $P6_{1}$.
\end{enumerate}
\end{enumerate}
\end{theorem}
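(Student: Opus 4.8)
The plan is to convert the classification of the abstract subgroups $S$ into the classification of representations $\rho:G(3_1)\to\mathcal{E}(E^3)$ carried out in this section, and then to read off the four alternatives from the position of the linear part on the character variety together with the dichotomy ``fixed point / no fixed point''.

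First I would apply Corollary \ref{cor1}. Since $S=\langle\mu,\upsilon\rangle$ is non cyclic with $\mu\neq1\neq\upsilon$ and $\mu^2=\upsilon^3=1$, it is the image of $\lambda:C_2\ast C_3\to\mathcal{E}(E^3)$, $D\mapsto\mu$, $F\mapsto\upsilon$, hence of $\rho=\lambda\circ q:G(3_1)\to\mathcal{E}(E^3)$, and $\rho(a),\rho(b)$ are conjugate isometries generating the same group $S$. Thus it suffices to classify these $\rho$ up to conjugacy in $\mathcal{E}(E^3)$. The linear part $\widehat\rho=\pi_2\circ\rho$ lands in $U_1=S^3$, so it is a c-representation of $G(3_1)$ in $S^3$; by Case 1 of Theorem \ref{teorema4} and its Euclidean limit it is determined up to conjugacy in $S^3$ by a real point $(x,y)$ of $V(\mathcal{I}_G)$ with $x=\cos\frac{\alpha}{2}\in[-\tfrac{\sqrt3}{2},\tfrac{\sqrt3}{2}]$, where $\alpha$ is the common rotation angle of $\widehat\rho(a)$ and $\widehat\rho(b)$. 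As $x\mapsto-x$ yields a conjugate representation I may assume $x\in[0,\tfrac{\sqrt3}{2}]$.

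Suppose first $x\in[0,\tfrac{\sqrt3}{2})$, so that the two rotation axes are not parallel and the linear part is irreducible; by the theorem preceding Corollary \ref{cor1} the affine representation then automatically satisfies $\rho(C)=1$, that is $\mu^2=\upsilon^3=1$ is built in. I split according to whether $S$ fixes a point. If it does, conjugating the fixed point to the origin puts $S\subset SO(3)$, and $S$ is exactly the spherical $(2,3,\alpha)$ rotation group, i.e. the holonomy of $S_{\frac{2\pi}{2},\frac{2\pi}{3},\alpha}^{2}$ with $\frac{2\pi}{6}<\alpha<\frac{10\pi}{6}$ (the open range being equivalent to $S$ non cyclic); this is alternative~1. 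If $S$ has no fixed point, the representation is the affine deformation studied above: its translational part is normalized as in Theorem \ref{tgeneucl}, and the remaining parameter $s$ is pinned by the translational part of the trefoil relation, equation \eqref{epoly2}, to $s=\frac{3-4x^2}{4x}$, giving a representation unique up to conjugacy in $\mathcal{E}(E^3)$ with non zero shift $\sigma=(\tfrac{3}{4x}-x)\sqrt{1-x^2}$; thus $S$ is conjugate to the group generated by $M_x(a),M_x(b)$ of \eqref{emxa}--\eqref{emxb}, which is alternative~2.a. At $x=0$ there is no deformation by Remark \ref{rro0}, so only the fixed point case survives there.

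There remains the Euclidean boundary $x=\tfrac{\sqrt3}{2}$, where $\omega\to0$, the two axes become parallel, $\widehat\rho(G)$ degenerates to the cyclic group $C_6$ and the normalized shift vanishes. Here I would argue directly in the plane orthogonal to the common axis, where $\mu$ and $\upsilon$ act as a $2$-fold and a $3$-fold rotation about distinct centres whose product is a $6$-fold rotation. Analysing the translations they generate one finds two possibilities: $S$ is conjugate either to the natural extension to $\mathcal{E}(E^3)$ of the holonomy of the Euclidean orbifold $S_{236}^{2}$ (alternative~2.b), or to the crystallographic group $P6_1$ (alternative~2.c). The main obstacle is exactly this degenerate point: one must describe these groups explicitly, identify them with $S_{236}^{2}$ and $P6_1$, and verify that together with alternatives~1 and~2.a they exhaust all cases, i.e. that no other relative position of the axes and no further cocycle class can occur. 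The exhaustiveness for $x<\tfrac{\sqrt3}{2}$ is controlled by the rigidity encoded in \eqref{epoly2}, while at $x=\tfrac{\sqrt3}{2}$ it rests on a direct case analysis.
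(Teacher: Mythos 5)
Your skeleton is the same as the paper's: reduce via Corollary \ref{cor1} to a representation $\rho\colon G(3_1)\to\mathcal{E}(E^3)$ with conjugate generators, dispose of the fixed-point case by Theorem \ref{tgenso3}, dispose of the non-parallel-axes case by Theorem \ref{tgeneucl} together with Remark \ref{rro0}, and isolate the parallel-axes case. But at that last case --- which is where essentially all of the remaining content of the theorem lies --- you stop: ``analysing the translations they generate one finds two possibilities'' is the statement to be proved, not an argument, and you say yourself this is ``the main obstacle''. The paper's proof does this work explicitly: it writes $\rho(a),\rho(b)$ as screw motions with common angle $\alpha$ and common shift $\sigma$ about two distinct parallel axes, imposes $aba=bab$ to get $(1-2\cos\alpha)^2\sin\alpha=0$, hence $\alpha=\frac{2\pi}{6}$; then $\sigma=0$ gives case 2.b, while for $\sigma\neq 0$ it normalizes $\sigma=\frac16$ and identifies the group with $P6_1$ by computing $\mathbf{AB}$, $\mathbf{ABA}$, their axes and shifts, and matching the crystallographic tables. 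Note also that your route into this case is not available as stated: Theorem \ref{teorema4} and the variety $V(\mathcal{I}_G)$ govern only irreducible and almost-irreducible c-representations, so ``the linear part is determined by a real point $(x,y)$ of $V(\mathcal{I}_G)$'' is unjustified once the two linear rotations share an axis (with $A=B$ the relation $ABA=BAB$ holds for \emph{every} angle, and the character $(x,1-x^2)$ lies on $V(\mathcal{I}_G)$ only for $x=\pm\frac{\sqrt3}{2}$); the forcing of $\alpha=\frac{2\pi}{6}$ must come from the affine relation, which is exactly the paper's computation.

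There is a second, more substantive problem: carried out literally, your plan can never produce case 2.c. Your reduction enforces $\mu^2=\nu^3=1$, so $\rho$ factors through $C_2\ast C_3$ and $\mu=\rho(aba)$, $\nu=\rho(ab)$ have finite order; a finite-order orientation-preserving isometry of $E^3$ is a genuine rotation about an affine line, with zero shift. Hence in the parallel-axes case $\mu$ and $\nu$ preserve every plane orthogonal to the common axis direction, and $S$ is the planar $(2,3,6)$ rotation group extended trivially to $E^3$ --- only alternative 2.b. The group $P6_1$ is torsion-free and so is never generated by elements of orders $2$ and $3$; it appears in the paper's proof precisely because, in the parallel case, that proof imposes only the braid relation $aba=bab$ and keeps the shift parameter $\sigma$ free, i.e.\ it classifies images of representations of $G(3_1)$ whose central element $\rho(C)=\rho(aba)^2$ may be a nontrivial translation, rather than images of $C_2\ast C_3$. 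So your sketch of ``two possibilities'' in the degenerate case is not something your own set-up would deliver; completing the proof requires recognizing this distinction and carrying out the screw-motion analysis with $\sigma\neq 0$, which is exactly what your proposal leaves open.
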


\begin{proof}
If $S$ has a fixed point, the translational part of $S$ is $0.$ Then $
S\subset SO(3)$, and therefore it is conjugate to the image of
\begin{equation*}
\widehat{\rho }_{x}^{\prime }=c\circ \widehat{\rho }_{x}:G(3/1)
\longrightarrow SO(3)
\end{equation*}
for some $x\in (-\sqrt{3}/2,\sqrt{3}/2)$. Theorem \ref{tgenso3} shows that $
\widehat{\rho }_{x}^{\prime }\left( G(3/1)\right) $ is isomorphic to the
holonomy of the 2-dimensional spherical conemanifold $S_{\frac{2\pi }{2},
\frac{2\pi }{3},\alpha }^{2}$, where $x=\cos \frac{\alpha }{2}$.

If $S$ has no fixed points, we analyze the relative position of the axes
of the generators $\rho (a)$ and $\rho (b)$. If these axes are not parallel,
then $S$ is conjugate to the image of
\begin{equation*}
\rho _{x}:G(3/1)\longrightarrow \mathcal{E}(E^{3})
\end{equation*}
for some $x\in \lbrack 0,\sqrt{3}/2)$. Theorem \ref{tgeneucl} gives the
generators $M_{x}(a)$ and $M_{x}(b)$.

If the axes are parallel, we show in the following that $S$ is conjugate to
the image of an affine deformation $\rho :G(3/1)\longrightarrow \mathcal{E}
(E^{3})$ of a representation
\begin{equation*}
\pi _{2}\circ \rho =\widehat{\rho }^{\prime }=c\circ \widehat{\rho }
:G(3/1)\longrightarrow SO(3)
\end{equation*}
Observe that the linear part of $S$, $\pi _{2}(S)$ is generated by two
conjugate rotations $(\pi _{2}\circ \rho )(a)$ and $(\pi _{2}\circ \rho )(b)$
with the same axis. Therefore $\widehat{\rho }$ is a reducible
representation.
\begin{equation*}
C_{2}\ast C_{3}\overset{\lambda }{\longrightarrow }\mathcal{E}(E^{3})\overset
{\pi _{2}}{\longrightarrow }SO(3)
\end{equation*}
Then $\pi _{2}\circ \lambda $ factors through the abelianized group $C_{6}$
of $C_{2}\ast C_{3}$. Therefore $\pi _{2}(S)$ is a cyclic group of order
dividing $6$. The elements $(\pi _{2}\circ \rho )(a)$ and $(\pi _{2}\circ
\rho )(b)$ are both elements of order $1,2,3$ or $6$. Up to similarity we
assume that the axis of $\rho (a)$ is the $Z$ axis and the axis of $\rho (b)$
is the line parallel to the $Z$ axis through the point $(1,0,0)$. In
affine notation
\begin{equation*}
\rho (a)=
\begin{pmatrix}
\cos \alpha  & -\sin \alpha  & 0 & 0 \\
\sin \alpha  & \cos \alpha  & 0 & 0 \\
0 & 0 & 1 & \sigma  \\
0 & 0 & 0 & 1
\end{pmatrix}
\end{equation*}
\begin{equation*}
\rho (b)=
\begin{pmatrix}
\cos \alpha  & -\sin \alpha  & 0 & 1-\cos \alpha  \\
\sin \alpha  & \cos \alpha  & 0 & -\sin \alpha  \\
0 & 0 & 1 & \sigma  \\
0 & 0 & 0 & 1
\end{pmatrix}
\end{equation*}
where $\alpha $ is the angle of rotation around the axis and $\sigma $ is
the shift.

The relation $aba=bab$ implies
\begin{eqnarray*}
\rho (aba)-\rho (bab) &=&
\begin{pmatrix}
0 & 0 & 0 & (1-2\cos \alpha )^{2}\sin \alpha  \\
0 & 0 & 0 & (1-2\cos \alpha )^{2}\sin \alpha  \\
0 & 0 & 0 & 0 \\
0 & 0 & 0 & 0
\end{pmatrix}
=
\begin{pmatrix}
0 & 0 & 0 & 0 \\
0 & 0 & 0 & 0 \\
0 & 0 & 0 & 0 \\
0 & 0 & 0 & 0
\end{pmatrix}
\\
&\Longrightarrow &\quad \cos \alpha =\frac{1}{2}\quad \Longrightarrow \quad
\alpha =\frac{2\pi }{6}\quad \Longrightarrow \quad x=\frac{\sqrt{3}}{2}
\end{eqnarray*}

If the parameter $\sigma $ is $0$, then $S$ is conjugate to the holonomy of
the Euclidean 2-dimensional orbifold $S_{236}$.

If the parameter $\sigma $ is different from $0,$ we assume up to similarity
that $\sigma =1/6.$ Then $S$ is conjugate to the Euclidean crystallographic
group $P6_{1}.$ To prove this, we compute some elements and their axes.
\begin{equation}
\mathbf{A}=\rho (a)=
\begin{pmatrix}
\frac{1}{2} & -\frac{\sqrt{3}}{2} & 0 & 0 \\
\frac{\sqrt{3}}{2} & \frac{1}{2} & 0 & 0 \\
0 & 0 & 1 & \frac{1}{6} \\
0 & 0 & 0 & 1
\end{pmatrix}
,\mathbf{B}=\rho (b)=
\begin{pmatrix}
\frac{1}{2} & -\frac{\sqrt{3}}{2} & 0 & \frac{3}{2} \\
\frac{\sqrt{3}}{2} & \frac{1}{2} & 0 & -\frac{\sqrt{3}}{2} \\
0 & 0 & 1 & \frac{1}{6} \\
0 & 0 & 0 & 1
\end{pmatrix}
\label{eroarob}
\end{equation}
The element $\mathbf{A}^{6}=\mathbf{B}^{6}$ is a translation by vector $
(0,0,1)$.
\begin{equation*}
\mathbf{ABA}=
\begin{pmatrix}
-1 & 0 & 0 & \frac{3}{2} \\
0 & -1 & 0 & \frac{\sqrt{3}}{2} \\
0 & 0 & -1 & \frac{1}{2} \\
0 & 0 & 0 & 1
\end{pmatrix}
,\quad (\mathbf{ABA})^{2}=
\begin{pmatrix}
1 & 0 & 0 & 0 \\
0 & 1 & 0 & 0 \\
0 & 0 & 1 & 1 \\
0 & 0 & 0 & 1
\end{pmatrix}
\end{equation*}
Then $\mathbf{ABA}$ is a rotation by $\pi $ with shift $(0,0,\frac{1}{2}).$
The axis of $\mathbf{ABA}$ is obtained by solving the equation
\begin{equation*}
\mathbf{ABA}
\begin{pmatrix}
x_{1} \\
x_{2} \\
x_{3} \\
1
\end{pmatrix}
-
\begin{pmatrix}
x_{1} \\
x_{2} \\
x_{3} \\
1
\end{pmatrix}
=
\begin{pmatrix}
0 \\
0 \\
\frac{1}{2} \\
0
\end{pmatrix}
\end{equation*}
Then $x_{1}=\frac{3}{4},$ $x_{2}=\frac{\sqrt{3}}{4}.$ The axis of $\mathbf{
ABA}$ is the line $\left( \frac{3}{4},\frac{\sqrt{3}}{4},t\right) $.
\begin{equation*}
\mathbf{AB}=
\begin{pmatrix}
-\frac{1}{2} & -\frac{\sqrt{3}}{2} & 0 & \frac{3}{2} \\
\frac{\sqrt{3}}{2} & -\frac{1}{2} & 0 & \frac{\sqrt{3}}{2} \\
0 & 0 & 1 & \frac{1}{3} \\
0 & 0 & 0 & 1
\end{pmatrix}
\end{equation*}
The isometry $\mathbf{AB}$ is a rotation by $\frac{2\pi }{3}$ with shift $
(0,0,\frac{1}{3}).$ The axis of $\mathbf{AB}$ is the line $\left( \frac{1}{2}
,\frac{\sqrt{3}}{2},t\right) $. These data correspond to the
crystallographic group $P6_{1}$ number 169 in the Tables \cite{Tables}.

The group $P6_{1}$ is also $\pi _{1}^{o}(\widehat{S}_{2_{1}3_{1}6_{1}})$,
the fundamental group of the 3-dimensional Euclidean orbifold $E^{3}/P6_{1}$
denoted by $\widehat{S}_{2_{1}3_{1}6_{1}}$, see Figure \ref{fp61}.

\begin{figure}[ht]
\epsfig{file=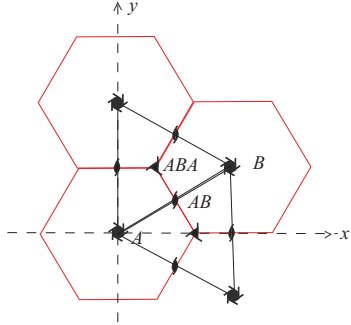,height=4.5cm} \caption{The crystallographic group $P6_{1}$.}\label{fp61}
\end{figure}
\end{proof}

\begin{theorem}
The homomorphism
\begin{equation*}
\begin{pmatrix}
\rho : & G(3/1) & \longrightarrow & P6_{1}\subset \mathcal{E}(E^{3}) \\
& a & \rightarrow & \mathbf{A} \\
& b & \rightarrow & \mathbf{B}
\end{pmatrix}
\end{equation*}
where $\mathbf{A}$ and $\mathbf{B}$ are given by \eqref{eroarob}, factors
through $\pi _{1}(M_{0}),$ where $M_{0}$ is the 3-manifold obtained from $
S^{3}$by 0-surgery in the trefoil knot.
\end{theorem}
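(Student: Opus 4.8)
The plan is to identify $\pi_1(M_0)$ explicitly as a quotient of $G(3_1)$ and then check that $\rho$ annihilates the single extra relator. Recall that $M_0$ is obtained from $S^3$ by $0$-surgery along the trefoil, so by the standard description of the effect of Dehn surgery on the fundamental group one has $\pi_1(M_0)=G(3_1)/\langle\langle \ell\rangle\rangle$, where $\ell$ is the $0$-framed longitude of the knot and $\langle\langle\cdot\rangle\rangle$ denotes normal closure. Since $\rho\colon G(3_1)\to P6_1$ is already a homomorphism, to prove that it factors through $\pi_1(M_0)$ it suffices to verify the single identity $\rho(\ell)=1$.

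First I would pin down the longitude using the torus-knot presentation \eqref{genfd}, namely $G(3_1)=\langle F,D\mid F^3=D^2\rangle$ with $F=ab$, $D=aba$, whose central element is $C=F^3=D^2=(ab)^3=(aba)^2$. The meridian is $m=a$, and in the abelianization $a\mapsto 1$, $C\mapsto 6$. For the $(2,3)$-torus knot the longitude is $\ell=Ca^{-6}$: it commutes with the meridian because $C$ is central, and it is null-homologous since $C\mapsto 6$ cancels $a^{-6}\mapsto -6$. Consequently
\begin{equation*}
\pi_1(M_0)=\langle a,b\mid aba=bab,\ C=a^{6}\rangle ,
\end{equation*}
so the whole problem reduces to showing $\rho(C)=\mathbf{A}^{6}$.

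Finally I would read off the two sides from the matrix computations already carried out in the proof of Theorem \ref{tposiblesH}. On the one hand $\rho(C)=\rho\big((aba)^2\big)=(\mathbf{ABA})^2$, which was computed there to be the translation by the vector $(0,0,1)$. On the other hand $\mathbf{A}^6=\mathbf{B}^6$ was shown to be the very same translation by $(0,0,1)$. Hence $\rho(C)=\mathbf{A}^6$, so that $\rho(\ell)=\rho(C)\,\mathbf{A}^{-6}=I$, and therefore $\rho$ descends to a well-defined homomorphism on $\pi_1(M_0)$.

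The only genuinely delicate point is the identification of the longitude: one must use the correct framing and sign conventions so that $\ell$ is exactly the $0$-framed (null-homologous, primitive) peripheral element, rather than a class $C^{\pm1}a^{\mp6}$ differing by orientation. This can be confirmed independently---for instance by a Fox-calculus computation of the longitude directly from $aba=bab$, or by noting that $\ell$ must be the unique primitive class in the peripheral $\mathbb{Z}^2$ that dies in $H_1$ of the complement. Once the longitude is fixed, the remainder is a direct reading of \eqref{eroarob} and of the powers of $\mathbf{A}$, $\mathbf{B}$ and $\mathbf{ABA}$ already recorded in the proof of Theorem \ref{tposiblesH}.
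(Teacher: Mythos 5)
Your proposal is correct and takes essentially the same route as the paper: the paper likewise presents $\pi_1(M_0)$ as the quotient of $G(3_1)$ by the normal closure of the canonical longitude, written there as $l_c=a^{-4}baab$, which is literally the same group element as your $Ca^{-6}=(aba)^2a^{-6}$ (since $C$ is central), and then verifies $\rho(l_c)=I$. The only difference is bookkeeping: the paper checks $\mathbf{A}^{-4}\mathbf{B}\mathbf{A}\mathbf{A}\mathbf{B}=I_{4\times 4}$ by direct matrix multiplication, whereas you reuse the facts $(\mathbf{ABA})^{2}=\mathbf{A}^{6}=$ translation by $(0,0,1)$ already established in the proof of Theorem \ref{tposiblesH}.
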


\begin{proof}
The manifold $M_{0}$ is the result of pasting a solid torus $T$ to the
exterior of the trefoil knot $K$ such that the meridian of the torus $T$ is
mapped to the canonical longitude $l_{c}$ of $K$. The element of $G(3_{1})$
represented by $l_{c}$ is $a^{-4}baab$. Then
\begin{equation*}
\pi _{1}(M_{0})=\left\vert a,b;aba=bab,a^{-4}baab\right\vert
\end{equation*}
The image $\rho (l_{c})=\mathbf{A}^{-4}\mathbf{BAAB}=I_{4\times 4},$
therefore the homomorphism $\rho $ factors through $\pi _{1}(M_{0})$: $\rho
= $ $\rho ^{\prime }\circ \eta $
\begin{equation*}
G(3/1)\overset{\eta }{\longrightarrow }\pi _{1}(M_{0})\overset{\rho ^{\prime
}}{\longrightarrow }P6_{1}
\end{equation*}
\end{proof}

\begin{corollary}
The exterior of the trefoil knot has a Euclidean structure whose completion
gives a complete Euclidean structure in $M_{0}$.
\end{corollary}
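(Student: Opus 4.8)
The plan is to recognize $M_0$ as the closed flat $3$-manifold $E^3/P6_1$, and then to recover the knot exterior by deleting the core geodesic of the surgery solid torus, so that the restriction of the flat metric is the claimed incomplete Euclidean structure whose metric completion returns $M_0$.

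First I would check that $P6_1$ is a Bieberbach (torsion-free) group. The elements $\mathbf{A}$, $\mathbf{AB}$ and $\mathbf{ABA}$ exhibited in the proof of Theorem \ref{tposiblesH} are screw motions: each is a rotation of angle $2\pi/6$, $2\pi/3$, $\pi$ composed with a nonzero translation ($1/6$, $1/3$, $1/2$) along its axis, hence fixed-point-free and of infinite order. Since every finite-order element of a crystallographic group is conjugate into its point group and therefore fixes a point, $P6_1$ has no torsion. Thus $P6_1$ acts freely and properly discontinuously on $E^3$, and $N:=E^3/P6_1$ is a closed orientable flat $3$-manifold, hence a complete Euclidean manifold with $\pi_1(N)\cong P6_1$.

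Next I would identify $M_0$ with $N$. The preceding theorem yields a surjection $\rho':\pi_1(M_0)\to P6_1=\pi_1(N)$, onto because $\mathbf{A}=\rho(a)$ and $\mathbf{B}=\rho(b)$ generate $P6_1$. Both $M_0$ and $N$ are closed orientable aspherical $3$-manifolds: $N$ is flat, and $M_0$, being $0$-surgery on a nontrivial torus knot, is an irreducible Seifert-fibered space (concretely the torus bundle over $S^1$ whose monodromy is the order-$6$ monodromy of the fibered trefoil). A surjection between the fundamental groups of two closed orientable aspherical $3$-manifolds is an isomorphism, since both are orientable Poincar\'e-duality groups of dimension $3$; this forces $\pi_1(M_0)\cong P6_1$, and by rigidity of such manifolds $M_0$ is homeomorphic to $N=E^3/P6_1$ and inherits its complete flat structure. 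The step I expect to be the main obstacle lives exactly here: establishing injectivity of $\rho'$ (that the surjection onto the Bieberbach group has trivial kernel, rather than merely realizing $P6_1$ as the holonomy image). The concrete alternative, verifying directly that $|a,b;\,aba=bab,\,a^{-4}baab|$ is the standard presentation of the $1/6$-turn flat $3$-manifold group, is a finite but slightly delicate computation.

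Finally I would transfer the structure to the exterior. Writing $M_0=X\cup_\phi V$ with $X=S^3\setminus\mathrm{int}\,N(K)$ the trefoil exterior and $V$ the surgery solid torus, the core circle $c$ of $V$ is the image in $N$ of the axis of the screw motion $\mathbf{A}=\rho(a)$ (the meridian $a$ being the core class), so $c$ is a closed geodesic of $(M_0,\mathrm{flat})$. Because $M_0\setminus c\cong S^3\setminus K\cong\mathrm{int}\,X$, the flat metric restricts to a Euclidean structure on the knot exterior which is incomplete precisely along the deleted geodesic $c$; its metric completion readjoins $c$ and recovers $(M_0,\mathrm{flat})$. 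This completion is smooth, of cone angle $2\pi$ about $c$, exactly because the meridian of $V$ is the longitude $l_c$ and $\rho(l_c)=\mathrm{id}$. The remaining ingredients here, the screw-motion bookkeeping and the Dehn-filling/completion argument, are routine, and together they give the assertion of the corollary.
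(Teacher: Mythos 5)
Your overall geometric picture (recognize $M_0$ as $E^3/P6_1$, delete the core geodesic, observe the restricted flat metric is incomplete with completion $M_0$) is exactly the content the paper intends; the paper itself offers no proof, treating the corollary as immediate from the factorization $\rho=\rho'\circ\eta$ together with the identification of $M_0$ established right afterwards (the theorem that $M_0=ST(S_{2,3,6})$ and the remark that $M_0$ is a torus bundle with order-$6$ monodromy). However, the step you yourself flag as the main obstacle is justified by a claim that is false: a surjection between fundamental groups of closed orientable aspherical $3$-manifolds need \emph{not} be an isomorphism. For instance, collapsing a handle gives a degree-one map $\Sigma_2\times S^1\to T^3$ inducing a surjection of orientable $PD(3)$ groups with enormous kernel; likewise any closed hyperbolic $3$-manifold with first Betti number at least $3$ surjects onto $\mathbb{Z}^3$. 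So Poincar\'e duality alone cannot give injectivity of $\rho'$. The gap is repairable using information the paper supplies: $\pi_1(M_0)\cong\mathbb{Z}^2\rtimes_\phi\mathbb{Z}$ with $\phi$ of order $6$ (the torus-bundle remark, or the Seifert presentation of $M_0=ST(S_{2,3,6})$), hence $\pi_1(M_0)$ is torsion-free and contains $\mathbb{Z}^3$ with index $6$. For any surjection onto the $3$-dimensional crystallographic group $P6_1$, the image of that $\mathbb{Z}^3$ has finite index in $P6_1$, hence is itself virtually $\mathbb{Z}^3$; a quotient of $\mathbb{Z}^3$ of rank $3$ forces the kernel to meet $\mathbb{Z}^3$ trivially, so the kernel is finite, hence trivial by torsion-freeness. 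This argument (or the direct presentation check you mention but do not carry out) is what actually closes the proof.

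A second, smaller gap sits in the last step, which you call routine: you identify the core $c$ of the surgery torus with the geodesic image of the axis of $\mathbf{A}$, but what the group theory gives you is only that $c$ is \emph{freely homotopic} to that geodesic, and freely homotopic knots in a $3$-manifold can have non-homeomorphic complements, so $M_0\setminus c\cong S^3\setminus K$ does not yet follow. The clean fix is again the Seifert picture the paper develops after the corollary: the fibration of $E^3$ by vertical lines descends to the Seifert fibration of $E^3/P6_1$ over the orbifold $S_{2,3,6}$, the geodesic in question is the exceptional fiber of order $6$, and removing an exceptional fiber of order $6$ leaves the Seifert fibered space over the disk with exceptional fibers of orders $2$ and $3$ -- which is precisely the trefoil exterior, by the fibration of $S^3$ used in the paper's final proof. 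With those two repairs your argument is correct, and it has the merit of making explicit the identification $\pi_1(M_0)\cong P6_1$ that the paper leaves implicit; also note that your torsion-freeness check for $P6_1$ tests only three elements, whereas the argument needs that \emph{no} nontrivial element fixes a point (standard for the space group $P6_1$, but requiring a check over all rotational classes).
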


The following Theorem gives more information on this manifold $M_{0}$.

\begin{theorem}
The manifold $M_{0}$ is the spherical tangent bundle of the 2-dimensional
Euclidean orbifold $S_{2,3,6}$.
\end{theorem}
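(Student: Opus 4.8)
The plan is to combine the explicit flat structure produced in the preceding theorems with the Bieberbach classification of flat $3$-manifolds, and then to upgrade the resulting homeomorphism to an identification of fibrations. By the previous theorem and its corollary the holonomy representation $\rho'\colon \pi_1(M_0)\to P6_1\subset\mathcal{E}(E^3)$ is discrete and faithful and its quotient is a complete flat structure on $M_0$; hence $M_0=E^3/P6_1$ is a closed orientable flat $3$-manifold whose linear holonomy (point group) is the cyclic group $C_6$. Recall that the spherical tangent bundle in the statement is the unit tangent bundle $T^1S_{2,3,6}$.

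First I would set up the Seifert fibration of $M_0$ over $S_{2,3,6}$. The central element $C=F^3=D^2$ of $G(3_1)$ is represented by a regular fibre of the Seifert fibration of the trefoil exterior, and it was computed in the previous theorem that $\rho(C)=\mathbf{A}^6=\mathbf{B}^6$ is the pure translation by $(0,0,1)$ along the screw axis. Projecting $E^3$ onto the $XY$-plane along this axis sends $P6_1$ onto the wallpaper group $p6=\mathbb{Z}^2\rtimes C_6$ acting on $E^2$, with kernel the infinite cyclic group $\langle C\rangle$. This gives a central extension $1\to\mathbb{Z}\to P6_1\to p6\to 1$, where $p6=\pi_1^{orb}(S_{2,3,6})$, hence a Seifert (circle) fibration $M_0\to E^2/p6=S_{2,3,6}$ whose generic fibre is $C$ and whose exceptional fibres, of orders $3$, $2$ and $6$, are the images of $F=AB$, $D=ABA$ and the core of the surgery solid torus (the new fibre has order $\Delta(l_c,\,6m+l_c)=6$, consistently with the Seifert framing $pq=6$ of the torus knot $\{3,2\}$).

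Next I would show that $T^1S_{2,3,6}$ is another model of the same manifold. Since $S_{2,3,6}$ is a Euclidean orbifold, $T^1S_{2,3,6}$ is locally modelled on $T^1E^2\cong E^3$ with its flat Sasaki metric, it is a genuine closed manifold (the circle fibre desingularises each cone point), and it is orientable with linear holonomy equal to the rotational part $C_6$ of $p6$. Thus $T^1S_{2,3,6}$ is also a closed orientable flat $3$-manifold with holonomy $C_6$. Among the six orientable flat $3$-manifolds the holonomy groups are $1,C_2,C_3,C_4,C_6$ and $C_2\times C_2$, each occurring exactly once, and by Bieberbach's theorems a flat manifold is determined up to affine equivalence by its fundamental group; hence $M_0$ and $T^1S_{2,3,6}$ are homeomorphic.

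Finally I would match the two fibrations, so as to identify $M_0$ with $T^1S_{2,3,6}$ as a bundle and not merely as a manifold. Both fibre over $S_{2,3,6}$ with Euler number equal to $\chi^{orb}(S_{2,3,6})=2-\tfrac12-\tfrac23-\tfrac56=0$, consistently with the fact that a Seifert fibration with nonzero Euler number over a Euclidean base would carry Nil geometry rather than Euclidean geometry. I would then read off the local Seifert invariants of $M_0$ directly from the screw-motion data ($\mathbf{A}^6=\mathbf{B}^6$ a translation, $AB$ an order-$3$ screw, $ABA$ an order-$2$ screw) and check that they agree with the unit-tangent-bundle invariants at cone points of orders $2,3,6$. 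The hard part is precisely this last matching: verifying that the fibre class $C$ together with the twisting of the fibration induced by the crystallographic screw axis coincides with the tangent-circle fibration, i.e. that the Seifert invariants of $M_0$ are exactly those of $T^1S_{2,3,6}$ and not those of some other Seifert fibration with $e=0$ over $S_{2,3,6}$.
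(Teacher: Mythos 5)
Your argument is essentially correct, but it takes a genuinely different route from the paper. The paper never uses the flat structure at all: its proof stays entirely inside Seifert fibred space theory, starting from the Seifert fibration $(Ooo|0;(3,-1),(2,1))$ of $S^{3}$ in which the trefoil is a generic fibre, computing from $H_{1}(S^{3}\setminus K)$ that the $0$-surgery slope forces the new exceptional fibre to be $(6,-1)$, and then recognizing the resulting symbol, after normalization and an orientation reversal, as the invariants of $ST(S_{2,3,6})$; this is purely topological, needs no input from the crystallographic sections, and, since Seifert invariants classify such fibrations, it automatically produces a fibre-preserving identification. You instead take the flat structure $M_{0}=E^{3}/P6_{1}$ from the preceding theorem and corollary, fibre it via the central extension $1\to\mathbb{Z}\to P6_{1}\to p6\to 1$, observe that $T^{1}S_{2,3,6}$ is likewise a closed orientable flat $3$-manifold with linear holonomy $C_{6}$, and invoke Bieberbach rigidity together with the classification of the six orientable flat $3$-manifolds (holonomies $1,C_{2},C_{3},C_{4},C_{6},C_{2}\times C_{2}$, each occurring once) to conclude $M_{0}\cong T^{1}S_{2,3,6}$. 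This is a valid and conceptually appealing argument, with two caveats. First, it is only as strong as the corollary it leans on: the completeness of the Euclidean structure on $M_{0}$ (equivalently, that $\pi_{1}(M_{0})\to P6_{1}$ is an isomorphism) is asserted in the paper without a detailed proof, whereas the paper's Seifert computation is logically independent of it and could in fact be used to justify that corollary rather than the other way around. Second, your final step --- matching the two Seifert fibrations, which you yourself flag as ``the hard part'' --- is not needed for the statement: the theorem claims an identification of manifolds, and the Bieberbach argument already delivers that. If you do want the fibred identification, you can avoid computing local invariants: any Seifert fibration of $M_{0}$ has fibre class generating a normal infinite cyclic subgroup of $P6_{1}$, and since the point group $C_{6}$ leaves invariant only the screw-axis direction, every such subgroup lies in the centre $\langle(0,0,1)\rangle$; hence the Seifert fibration over $S_{2,3,6}$ is unique and necessarily coincides with the tangent-circle fibration.
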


\begin{proof}
The sphere $S^{3}$ has the Seifert manifold structure $(Ooo|0;(3,-1),(2,1))$
, with the trefoil knot $K$ as general fibre. The result of 0-surgery in a
general fibre produces a Seifert manifold $(Ooo|0;(3,-1),(2,1),(\alpha
,\beta ))$, where the pair $(\alpha ,\beta )$ can be easily computed as
follows. Let $Q$, $Q_{1}$ and $Q_{2}$ be simple closed curves in $S^{2}$, the
base of the Seifert fibration $(Ooo|0;(3,-1),(2,1))$, which are meridians of
a general fibre $H$, the exceptional fibre $(3,-1)$, and the exceptional
fibre $(2,1)$, respectively. Then, the first homology group of $(S^{3}\setminus K)$ has the following presentation
\begin{equation*}
H_{1}(S^{3}\setminus
K)=|Q,Q_{1},Q_{2},H;3Q_{1}-H=0,2Q_{2}+H=0,Q+Q_{1}+Q_{2}=0|
\end{equation*}
Let $l$ be the canonical longitude of $K=H$, then in $M_{0}$ we have that
\begin{equation*}
l=\alpha Q+\beta H=\alpha (-Q_{1}-Q_{2})+\beta 3Q_{1})=Q_{1}(3\beta -\alpha
)-\alpha Q_{2}=0
\end{equation*}
This implies that
\begin{equation*}
(3\beta -\alpha )Q_{1}=\alpha Q_{2}\quad \Rightarrow \quad -\alpha
Q_{1}=\alpha Q_{2}+2\beta Q_{2}
\end{equation*}
But also $2Q_{2}=-3Q_{1}$. Therefore
\begin{equation*}
\frac{3\beta -\alpha }{\alpha }=-\frac{3}{2}\quad \Rightarrow \quad 6\beta
-2\alpha =-3\alpha \quad \Rightarrow \quad 6\beta =-\alpha \quad \Rightarrow
\quad \alpha =6,\,\beta =-1.
\end{equation*}
Using Seifert signature equivalence, we have that
\begin{equation*}
M_{0}=(Ooo|0;(3,-1),(2,1),(6,-1))=(Ooo|1;(3,-1),(2,-1),(6,-1))
\end{equation*}
which is orientation reversing equivalent to
\begin{equation*}
(Ooo|1;(3,1),(2,1),(6,1))=ST(S_{2,3,6})
\end{equation*}
the spherical tangent of the Euclidean orbifold $S_{6,3,2}$ since
\begin{equation*}
\chi =-1+\frac{1}{2}+\frac{1}{3}+\frac{1}{6}=0
\end{equation*}
\end{proof}

\begin{remark}
The manifold $M_{0}$ is a torus bundle over $S^{1}$ with periodic monodromy
of order 6 (\cite{Z1965}).
\end{remark}

Theorem \ref{tposiblesH} can be used to identify all the 3-dimensional
orientation preserving Euclidean crystallographic groups generated by one
element of order two and one other of order three.

\begin{theorem}
\label{tposibleseucli}The only 3-dimensional orientation preserving
Euclidean crystallographic groups generated by one element of order two and
one other of order three, up to similarity, are $I2_{1}3$, $P4_{1}32(P4_{3}32)$
and $P6_{1}$.
\end{theorem}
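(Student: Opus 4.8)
The plan is to reduce everything to Theorem \ref{tposiblesH} together with the two standard facts about crystallographic groups: by Bieberbach's theorem the point group (the image under $\pi_2$ of the linear parts) must be finite, and the translation subgroup must be a lattice of full rank $3$. Since such a group $S$ is generated by an order-$2$ isometry $\mu$ and an order-$3$ isometry $\upsilon$, Theorem \ref{tposiblesH} already lists the only possibilities for $S$ up to conjugacy in $\mathcal{E}(E^{3})$, so the task is to decide which of its cases can be crystallographic and, for those, to read off the space-group symbol.

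First I would discard the cases that cannot carry a full-rank lattice. In case 1 the group $S$ has a global fixed point, so its translation subgroup is trivial and $S$ is finite; it is never crystallographic. In case 2.b the group is the natural extension to $\mathcal{E}(E^{3})$ of the holonomy of the planar orbifold $S^{2}_{236}$, whose translations all lie in a single plane, giving a lattice of rank $2$ only; hence it is not a $3$-dimensional crystallographic group either. This leaves cases 2.a and 2.c.

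Next, case 2.a. Here the point group $\pi_{2}(S)$ is the image of $\widehat{\rho}_{x}'=c\circ\widehat{\rho}_{x}:G(3_{1})\to SO(3)$, which by Theorem \ref{tgenso3} is the holonomy of the spherical cone-manifold $S^{2}_{2\pi/2,2\pi/3,\alpha}$ with $x=\cos(\alpha/2)$. Bieberbach forces this to be finite, i.e. the $(2,3,p)$-triangle rotation group must be finite, which happens exactly for $p\in\{2,3,4,5\}$, corresponding to the orbifolds $S_{232},S_{233},S_{234},S_{235}$ with point groups $\Sigma_{3}$, $A_{4}$ (tetrahedral), $\Sigma_{4}$ (octahedral), $A_{5}$ (icosahedral) and with $x=\cos(\pi/p)$. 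The value $p=2$ gives $x=0$, which is excluded both by the open interval $(0,\sqrt{3}/2)$ of case 2.a and by Remark \ref{rro0} (the representation at $x=0$ admits no affine deformation). The value $p=5$ gives the icosahedral point group; since $A_{5}$ contains order-$5$ rotations it admits no invariant lattice, so any nonzero translation generates, under the irreducible $A_{5}$-action, a dense rather than discrete subgroup of $H_{0}$, and $S$ is not discrete; this rules out $S_{235}$. For a generic $x\in(0,\sqrt{3}/2)$ the point group is infinite, so again $S$ is not crystallographic. Thus in case 2.a only $x=\cos(\pi/3)=1/2$ (tetrahedral) and $x=\cos(\pi/4)=\sqrt{2}/2$ (octahedral) survive.

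Finally I would identify the surviving groups. Case 2.c is by definition $P6_{1}$. For $x=1/2$ and $x=\sqrt{2}/2$ the tetrahedral resp. octahedral point groups act irreducibly on $H_{0}\cong E^{3}$, so the translation subgroup generated by the nonzero screw shift $\sigma=(3/(4x)-x)\sqrt{1-x^{2}}$ of Theorem \ref{tgeneucl} is a point-group-invariant, hence full-rank, lattice, and $S$ is crystallographic. It then remains to substitute $x=1/2$ and $x=\sqrt{2}/2$ into the explicit matrices $M_{x}(a),M_{x}(b)$ of \eqref{emxa}--\eqref{emxb}, compute the lattice of translations and the axes and screw components of a few short words (exactly as was done for $P6_{1}$ in Theorem \ref{tposiblesH}), and match the data against the International Tables \cite{Tables}. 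I expect this matching to be the main obstacle: one must verify that $x=1/2$ yields the body-centered cubic group $I2_{1}3$ and that $x=\sqrt{2}/2$ yields the primitive cubic group $P4_{1}32$ (equivalently, up to an orientation-reversing similarity, its enantiomorph $P4_{3}32$), distinguishing in particular the lattice centering and the $4_{1}$-versus-$4_{3}$ screw component. These are finite but delicate computations of the same flavour as the $P6_{1}$ calculation already carried out.
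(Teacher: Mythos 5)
Your overall strategy---reduce to Theorem \ref{tposiblesH}, discard the cases that cannot be crystallographic, restrict the parameter $x$, and identify the survivors in the International Tables---is the same skeleton as the paper's proof (and your explicit dismissal of cases 1 and 2.b is a point the paper leaves implicit). However, there are two genuine gaps. First, your restriction mechanism is stated incorrectly: it is \emph{not} true that the holonomy of $S_{\frac{2\pi}{2},\frac{2\pi}{3},\alpha}^{2}$ is finite exactly for $\alpha=2\pi/p$, $p\in\{2,3,4,5\}$. At $x=\cos\frac{2\pi}{5}\approx 0.309\in(0,\sqrt{3}/2)$ (so $\alpha=\frac{4\pi}{5}$) one computes $\cos\omega=\frac{x^{2}-1/2}{1-x^{2}}=-\frac{1}{\sqrt{5}}$, and the two generators are rotations by $\frac{4\pi}{5}$ about two $5$-fold axes of an icosahedron; their image is again the finite group $A_{5}$, even though the cone-manifold is not an orbifold. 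Your $A_{5}$-exclusion happens to kill this case as well, so the final list is not endangered, but the enumeration ``exactly $p\in\{2,3,4,5\}$'' is false as stated. The paper avoids any enumeration of finite subgroups by applying the crystallographic restriction directly to the generators: a screw motion in a crystallographic group has rotation part of order $1,2,3,4$ or $6$, which in the relevant range of $\alpha$ forces $\alpha\in\{\frac{2\pi}{2},\frac{2\pi}{3},\frac{2\pi}{4},\frac{2\pi}{6}\}$, i.e.\ $x\in\{0,\frac{1}{2},\frac{\sqrt{2}}{2},\frac{\sqrt{3}}{2}\}$, in one stroke.

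Second, and more seriously, the step you defer as ``the main obstacle'' is in fact the bulk of the paper's proof, and your interim argument for crystallinity would fail. Irreducibility of the tetrahedral or octahedral point-group action shows that the subgroup of $H_{0}$ generated by the translations in $S$ spans $H_{0}$, but it does not show that this subgroup is discrete: a finitely generated, point-group-invariant subgroup of $\mathbb{R}^{3}$ can be dense (compare $\mathbb{Z}+\sqrt{2}\,\mathbb{Z}\subset\mathbb{R}$), so ``full rank'' does not give ``lattice.'' Discreteness and cocompactness are precisely what the paper's explicit computations deliver: for $x=\frac{1}{2}$ it proves $\rho_{1/2}(G(3_{1}))\leqslant I2_{1}3$ by matching the axes and the data $\delta$, $\omega$, $\sigma$, and then proves equality via the exact sequence $0\rightarrow T^{3}\rightarrow I2_{1}3\rightarrow S_{233}\rightarrow 1$, Theorem \ref{tgenso3}, and the translation $\mathbf{A}^{3}$; for $x=\frac{\sqrt{2}}{2}$ it exhibits $\mathbf{A}$, $\mathbf{B}$ as the $4_{1}$ screw axes of $P4_{1}32$ and recovers the remaining elements ($\mathbf{BA}$, $\mathbf{ABA}$, and the translations $\mathbf{A}^{4}$, $\mathbf{B}^{4}$). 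Without these verifications, or some substitute for them, your argument establishes only that $I2_{1}3$, $P4_{1}32$ and $P6_{1}$ are the only \emph{candidates}, not that the first two actually arise and are crystallographic, which is half of the theorem.
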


\begin{proof}
Assume $S$ is conjugate to the subgroup $\rho _{x}(G(3/1)$ generated by $
M_{x}(a)$ and $M_{x}(b)$, $x=\cos \frac{\alpha }{2}\in (0,\sqrt{3}/2)$,
where $M_{x}(a)$ and $M_{x}(b)$ are given by \eqref{emxa} and \eqref{emxb}.
A necessary condition for $S$ be Euclidean crystallographic group is $\alpha
\in \left\{ \frac{2\pi }{2},\frac{2\pi }{3},\frac{2\pi }{4},\frac{2\pi }{6}
\right\} \Longleftrightarrow x\in \left\{ 0,\frac{1}{2},\frac{\sqrt{2}}{2},
\frac{\sqrt{3}}{2}\right\} $.

\begin{enumerate}
\item [Case $x=0.$] This case is impossible. (Remark \ref{rro0}.)

\item [Case $x=\frac{1}{2}$.] Then
\begin{equation*}
\alpha =\frac{2\pi }{3},\quad \cos \omega =-\frac{1}{3},\quad \sigma =\frac{
\sqrt{3}}{2},\quad \delta =\frac{\sqrt{2}}{4}
\end{equation*}
We will prove that $\rho _{\frac{1}{2}}(G(3/1)$ is the Euclidean
crystallographic group $I2_{1}3$ (number 199 in Tables \cite{Tables}).

\begin{enumerate}
\item $\rho _{\frac{1}{2}}(G(3/1)\leqslant I2_{1}3$. To compare both groups,
we conjugate $\rho _{\frac{1}{2}}(G(3/1)$ by a similarity so that
\begin{equation*}
\alpha =\frac{2\pi }{3},\quad \cos \omega =-\frac{1}{3},\quad \sigma =\frac{
\sqrt{3}}{6},\quad \delta =\frac{\sqrt{2}}{12}
\end{equation*}
The axes $\rho _{\frac{1}{2}}(a)$ and $\rho _{\frac{1}{2}}(b)$ are depicted
in Figure \ref{fabI23}. It can be checked that the two axes in Figure 
\ref{fabI23} have distance $\frac{\sqrt{2}}{12}=\delta $ and the angle $
\omega $ is
\begin{equation*}
\cos \omega =2\cos ^{2}\frac{\omega }{2}-1=2\left( \frac{1}{\sqrt{3}}\right)
^{2}-1=-\frac{1}{3}
\end{equation*}
The shift $\sigma =\frac{\sqrt{3}}{6}$ is the length of the diagonal of the
cube with edge length $\frac{1}{6}$.Therefore $\rho _{\frac{1}{2}}(a^{-1})$
and $\rho _{\frac{1}{2}}(b)$ are the screws axes in $I2_{1}3$ (Figure \ref
{fi213} $)$ denoted by $\mathbf{A}^{-1}$ and $\mathbf{B}$.

\begin{figure}[ht]
\epsfig{file=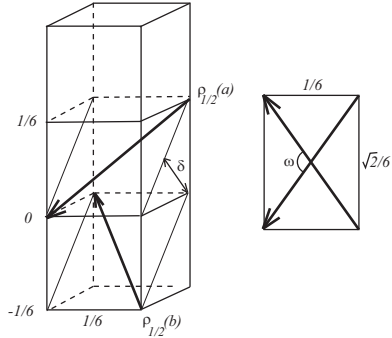,height=4.5cm} \caption{The axes of $\rho _{\frac{1}{2}}(a)$
and $\rho _{\frac{1}{2}}(b)$.}\label{fabI23}
\end{figure}

\begin{figure}[ht]
\epsfig{file=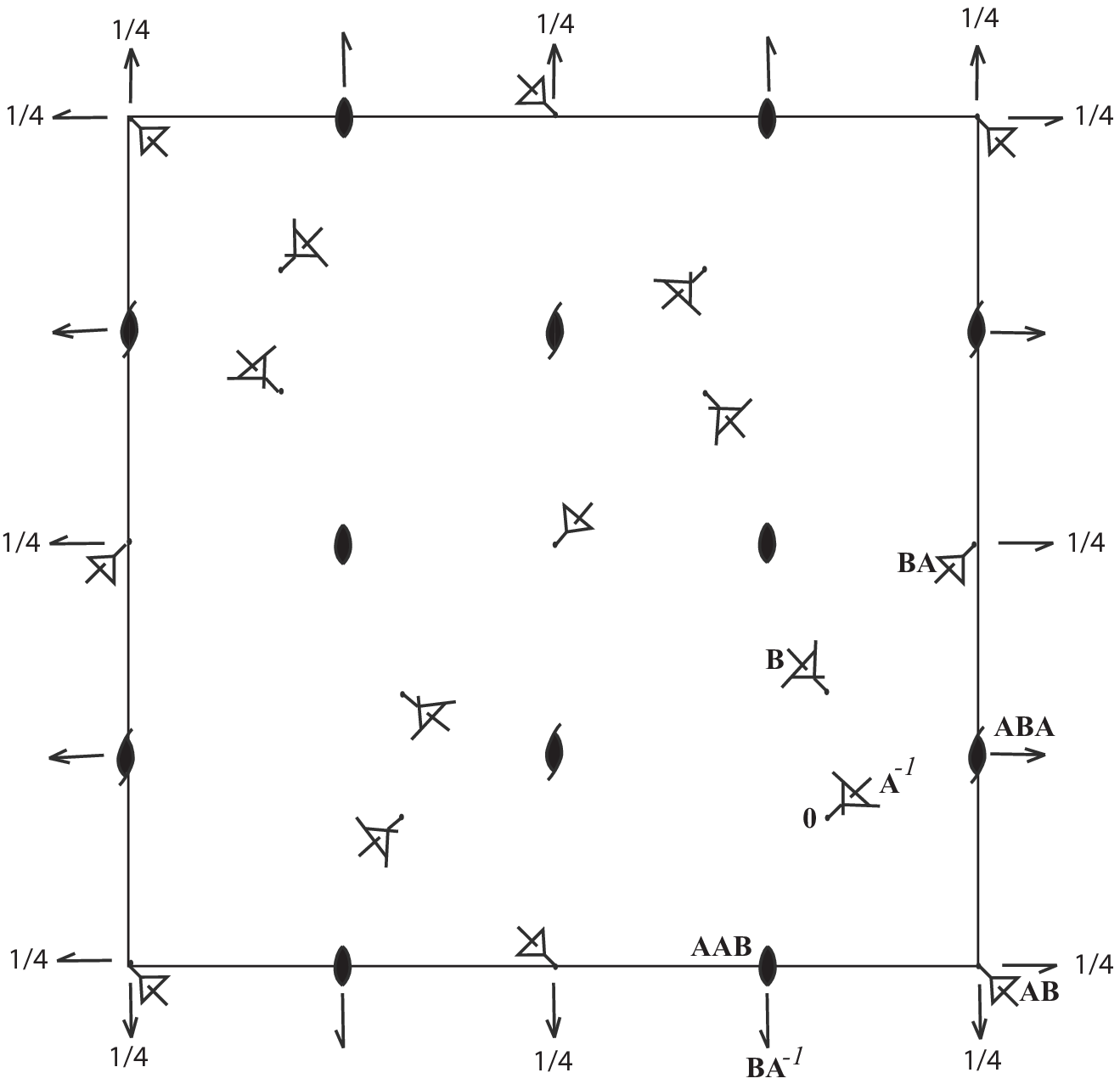,height=6.5cm} \caption{The crystallographic group $I2_{1}3$.}\label{fi213}
\end{figure}

\item $\rho _{\frac{1}{2}}(G(3/1)=I2_{1}3$. The group $I2_{1}3$ satisfies
the following short exact sequence
\begin{equation*}
0\longrightarrow T^{3}\longrightarrow I2_{1}3\overset{p}{\longrightarrow }
S_{233}\longrightarrow 1
\end{equation*}
where $T^{3}$ is the translational subgroup of a cube with edge length $1$
as fundamental domain and $S_{233}$, the linear quotient, is the holonomy of
the 2-dimensional spherical orbifold denoted also by $S_{233}$. It is clear
that $p(\rho _{\frac{1}{2}}(a^{-1}))=\rho _{\frac{1}{2}}^{\prime }(a^{-1})$
and $p(\rho _{\frac{1}{2}}(b))=\rho _{\frac{1}{2}}^{\prime }(b)$ generate $
S_{233}$ by Theorem \ref{tgenso3}. Therefore it suffices to prove that $
T^{3}\leqslant \rho _{\frac{1}{2}}(G(3/1)$ and to find enough elements: The
element $\mathbf{A}^{3}$ is a translation with vector $(\frac{1}{2},\frac{1}{
2},\frac{1}{2})$. Figure \ref{fi213} shows the elements $\mathbf{AB}$, $
\mathbf{BA}$ (3-fold rotations), $\mathbf{AAB}$ (2-fold rotation), $\mathbf{
ABA}$, $\mathbf{BA}^{-1}$ (2-screw rotations).
\end{enumerate}

\item [Case $x=\frac{\sqrt{2}}{2}$.] Then
\begin{equation*}
\alpha =\frac{2\pi }{4},\quad \cos \omega =0,\quad \sigma =\frac{1}{4},\quad
\delta =\frac{1}{4}
\end{equation*}
The group $P4_{1}32$ is depicted in Figure \ref{fi4132}. Up to similarity we assume that
the axes $\rho _{\frac{\sqrt{2}}{2}}(a)$ and $\rho _{\frac{\sqrt{2}}{2}}(b)$
are the ones depicted in Figure \ref{fabP432} , so they are the $4_{1}$
axes denoted by $\mathbf{A}$ and $\mathbf{B}$ in Figure \ref{fi4132}.

\begin{figure}[ht]
\epsfig{file=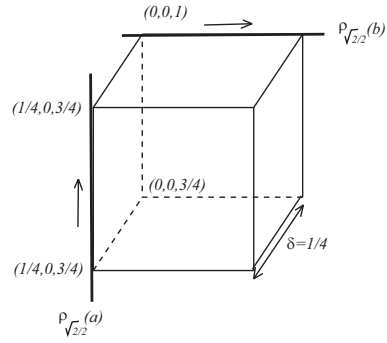,height=4.5cm} \caption{The axes of $\rho _{\frac{
\sqrt{2}}{2}}(a)$ and $\rho _{\frac{\sqrt{2}}{2}}(b)$.}\label{fabP432}
\end{figure}

\begin{figure}[ht]
\epsfig{file=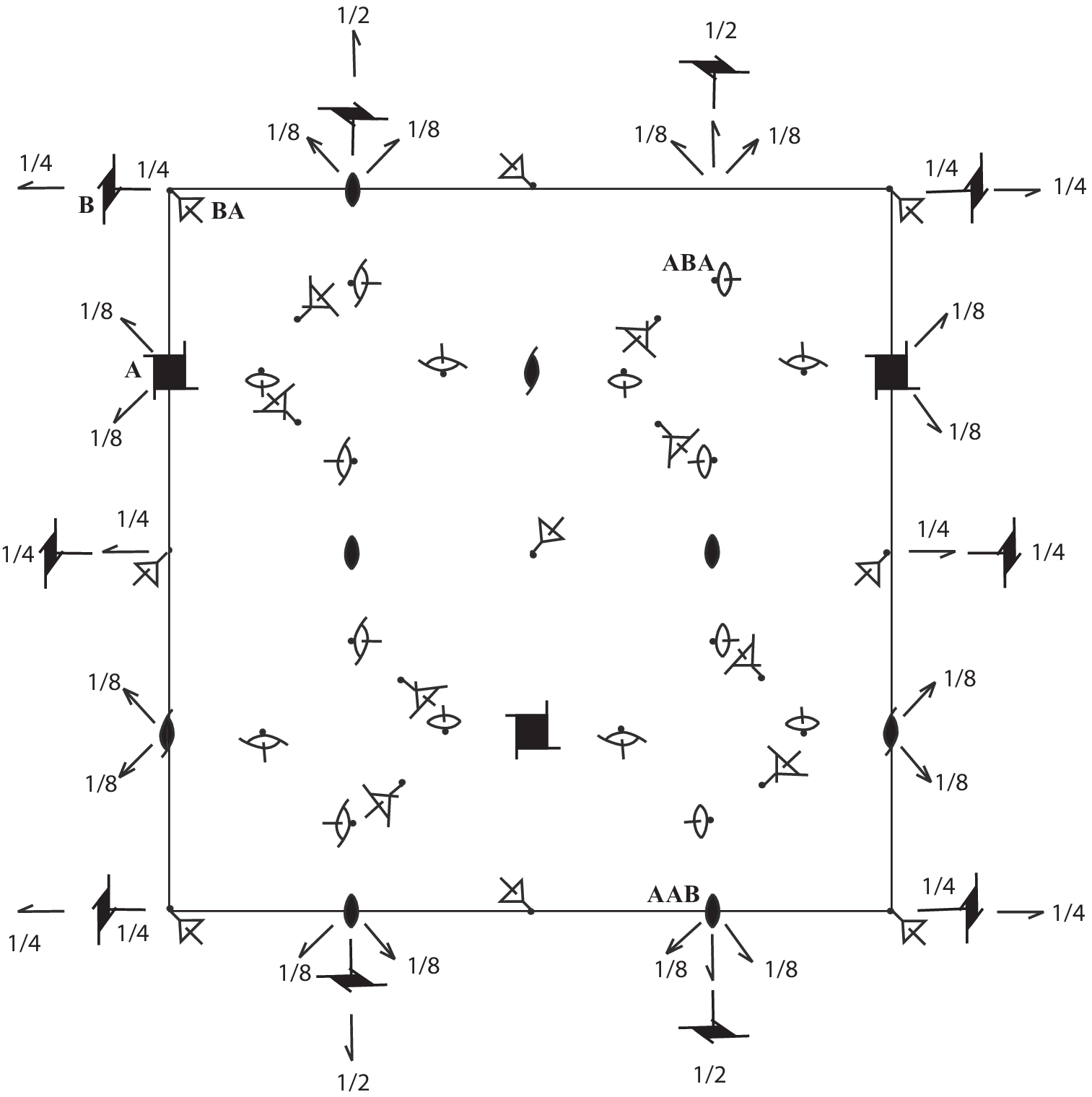,height=8.5cm} \caption{The crystallographic group $I4_{1}32$.}\label{fi4132}
\end{figure}

In affine notation
\begin{equation}
\mathbf{A}=\rho _{\frac{\sqrt{2}}{2}}(a)=
\begin{pmatrix}
0 & -1 & 0 & \frac{1}{4} \\
1 & 0 & 0 & -\frac{1}{4} \\
0 & 0 & 1 & \frac{1}{4} \\
0 & 0 & 0 & 1
\end{pmatrix}
,\quad \mathbf{B}=\rho _{\frac{\sqrt{2}}{2}}(b)=
\begin{pmatrix}
0 & 0 & 1 & -\frac{1}{4} \\
0 & 1 & 0 & \frac{1}{4} \\
-1 & 0 & 0 & \frac{1}{4} \\
0 & 0 & 0 & 1
\end{pmatrix}
\end{equation}

This proves that $\rho _{\frac{\sqrt{2}}{2}}(G(3/1)\leqslant P4_{1}32$,
because $\rho _{\frac{\sqrt{2}}{2}}(G(3/1)$ is generated by $\mathbf{A}$ and
$\mathbf{B}$, both elements of $P4_{1}32$. To prove the equality, we can
compute the others elements of $P4_{1}32$.  For instance, the element
\begin{equation*}
\mathbf{BA}=
\begin{pmatrix}
0 & 0 & 1 & 0 \\
1 & 0 & 0 & 0 \\
0 & 1 & 0 & 0 \\
0 & 0 & 0 & 1
\end{pmatrix}
\end{equation*}
is a rotation by $2\pi /3$ with axis $(t,t,t)$. The element
\begin{equation*}
\mathbf{ABA}=
\begin{pmatrix}
-1 & 0 & 0 & \frac{1}{4} \\
0 & 0 & 1 & -\frac{1}{4} \\
0 & 1 & 0 & \frac{1}{4} \\
0 & 0 & 0 & 1
\end{pmatrix}
\end{equation*}
is a rotation by $\pi $ with axis $(1/8,t,t+1/4)$. Observe that $\mathbf{A}
^{4}$ and $\mathbf{B}^{4}$ are translations. The subgroup generated by $
\mathbf{A}^{4}$ and $\mathbf{B}^{4}$ has a cube as fundamental domain, with edge length  one.

\item [Case $x=\frac{\sqrt{3}}{2}.$] Theorem \ref{tposiblesH} 2.c) shows that $
S $ is conjugate to $P6_{1}$.
\end{enumerate}
\end{proof}

\begin{remark}
The space $E^{3}/I2_{1}3$ is the Euclidean orbifold $Q_{1}$ with underlying
space $S^{3}$ and singular set the rational link $10/3$ whose two components
have isotropies or orders $2$ and $3$. See Figure \ref{forbeuclideas}.
\end{remark}

\begin{remark}
The space $E^{3}/P4_{1}32$ is the Euclidean orbifold $Q_{2}$ with underlying
space $S^{3}$ and singular set the graph depicted in Figure \ref
{forbeuclideas}. This orbifold is 2-fold covered by the Euclidean orbifold $
E^{3}/P2_{1}3$ with underlying space $S^{3}$ and singular set the Figure
Eight knot with isotropy of order $3$.

\begin{figure}[ht]
\epsfig{file=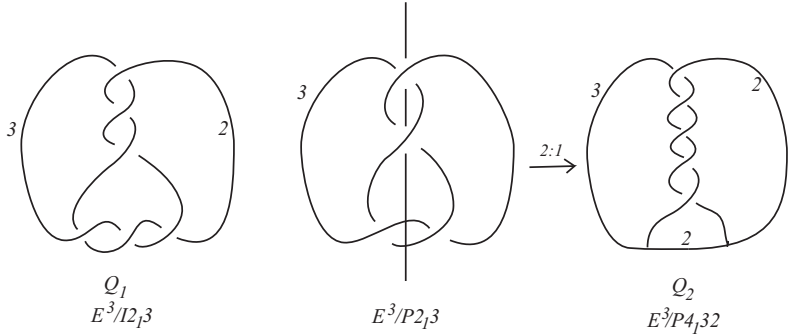,height=4.5cm} \caption{Euclidean orbifolds.}\label{forbeuclideas}
\end{figure}

\end{remark}

\subsection{Case 2: $|x|=\frac{\protect\sqrt{3}}{2}$}
The almost-irreducible representation $\rho _{\pm \frac{\sqrt{3}}{2}
}:G(3/1)\longrightarrow SL(2,\mathbb{C})$ given by \eqref{ecaso2} do not
have a proper affine deformation, because the polynomial \eqref{epoly2} in
this case gives $s=0$. But, by Theorem \ref{tposiblesH}, there exists an
affine deformation of a reducible representation $\rho _{\frac{\sqrt{3}}{2}
}:G(3/1)\longrightarrow SO(3)$ , whose image is the crystallographic group $
P6_{1}$.

\subsection{The quaternion algebra $\left( \frac{-1,1}{\mathbb{R}}\right) $.}
Next we analyze the affine deformations of representations corresponding to
points in the character variety belonging to Cases 3, 4 and 5: $|x|>\frac{
\sqrt{3}}{2}$, where the quaternion algebra is $M(2,\mathbb{R})=\left( \frac{
-1,1}{\mathbb{R}}\right) $. Therefore $U_{1}=SL(2,\mathbb{R})$, $
H_{0}=E^{1,2}$, the Minkowsi space, and $A(H)=\mathcal{L}(E^{1,2})$.

The meaning of the parameters depends on the sign of $x-1$.
\begin{enumerate}
\item [Case 3]: $\frac{\sqrt{3}}{2}<x<1\Longrightarrow x-1<0$, then $A^{-}$ is
a vector inside the nullcone (a time-like vector) and $A$ acts as a right
spherical rotation around the axis $A^{-}$ with angle $\alpha $, where $
x=\cos \frac{\alpha }{2}$. Let $d$ be the hyperbolic distance between the
projection of $A^{-}$ and $B^{-}$ on the hyperbolic plane (pure unit
quaternions in the upper component), then $\cosh d=\frac{y}{1-x^{2}}$.
\begin{eqnarray}
x &=&A^{+}=B^{+}=\cos \frac{\alpha }{2}  \notag \\
u &=&N(A^{-})=N(B^{-})=1-x^{2}=\sin ^{2}\frac{\alpha }{2}  \notag \\
y &=&u\cosh d
\end{eqnarray}

\item [Case 4]: $x=1,$ then $A^{-}$ belongs to the nullcone ( a light-like
vector) and $A$ acts as a parabolic transformation around the axis $A^{-}$.

\item [Case 5]: $1<x\Leftrightarrow $ $x-1>0$, then $A^{-}$ is a vector
outside the nullcone (a space-like vector) and $A$ acts as a right
hyperbolic rotation around the axis $A^{-}$ with distance $\partial $, where
$x=\cosh \frac{\partial }{2}$. The meaning of the parameter $y$ depends on
the sign of $y^{2}-(x^{2}-1)^{2}$.

\begin{enumerate}
\item If $y^{2}-(x^{2}-1)^{2}>0$, $y^{2}=(x^{2}-1)^{2}\cosh ^{2}d$, where $d$
is the distance between the polars of $A^{-}$ and $B^{-}$.

\item If $y^{2}-(x^{2}-1)^{2}=0$ then $y^{2}=(x^{2}-1)^{2}$.

\item If $y^{2}-(x^{2}-1)^{2}<0$, $y^{2}=(x^{2}-1)^{2}\cos ^{2}\theta $,
where $\theta $ is the angle between the polars of $A^{-}$ and $B^{-}$.

Therefore
\begin{eqnarray}
x &=&A^{+}=B^{+}=\cosh \frac{\partial }{2}  \notag \\
y^{2} &>&(x^{2}-1)^{2}\Longrightarrow y^{2}=(x^{2}-1)^{2}\cosh ^{2}d \\
y^{2} &<&(x^{2}-1)^{2}\Longrightarrow y^{2}=(x^{2}-1)^{2}\cos ^{2}\theta
\end{eqnarray}

The shift $\sigma $ of the element $(sA^{-},A)$ and the distance $\delta $
between the axes of $(sA^{-},A)$ and $(sB^{-}+(A^{-}B^{-})^{-},B)$ are as in
the case 1, that is
\begin{equation*}
\sigma =s\sqrt{u}=s\sqrt{1-x^{2}}.
\end{equation*}
\begin{equation}
N((A^{-}B^{-})^{-})=-y^{2}+u^{2}\quad \Longrightarrow \quad \delta =\frac{
\sqrt{u^{2}-y^{2}}}{2}
\end{equation}
\end{enumerate}
\end{enumerate}

\begin{theorem}
\label{tgenlorentz} For each $x\in (\sqrt{3}/2,\infty ),$ there exists a
representation $\rho _{x}:G(3/1)\longrightarrow \mathcal{L}(E^{1,2})$ unique
up to conjugation in $\mathcal{L}(E^{1,2})$ such that
\begin{equation*}
\begin{array}{l}
\rho _{x}(a)=(sA^{-},A) \\
\rho _{x}(b)=((sB^{-}+(A^{-}B^{-})^{-},B)
\end{array}
\end{equation*}
where the values of $A,B\in SL(2,\mathbb{R})$ are the following

\begin{enumerate}
\item For $\frac{\sqrt{3}}{2}<x<1$.
\begin{equation*}
\begin{array}{c}
A=x+\sqrt{1-x^{2}}i,\quad \sqrt{1-x^{2}}>0 \\
B=x+\frac{2x^{2}-1}{2\sqrt{1-x^{2}}}i+\frac{1}{2}\sqrt{\frac{4x^{2}-3}{
1-x^{2}}}j
\end{array}
\end{equation*}

\item For $x=1\Longrightarrow $ $y=\frac{1}{2}$
\begin{equation*}
\begin{array}{c}
A=1+i+j \\
B=1+\frac{1}{4}(i-j)
\end{array}
\end{equation*}

\item For $x>1$, $y=\frac{2x^{2}-1}{2}>0$
\begin{equation*}
\begin{array}{c}
\widehat{\rho }_{x}(a)=A=x+\sqrt{x^{2}-1}j,\quad \sqrt{x^{2}-1}>0 \\
\widehat{\rho }_{x}(b)=B=x-\frac{1}{2}\sqrt{\frac{4x^{2}-3}{x^{2}-1}}i-\frac{
2x^{2}-1}{2\sqrt{x^{2}-1}}j
\end{array}
\end{equation*}
\end{enumerate}

In affine linear notation, where $\left\{ X,Y,Z\right\} $ is the coordinate
system associated to the basis $\left\{ -ij,j,i\right\} $
\begin{equation*}
\begin{array}{l}
\rho _{x}(a)=M_{x}(a)\left(
\begin{array}{c}
X \\
Y \\
Z \\
1
\end{array}
\right) =\left(
\begin{array}{c}
X^{\prime } \\
Y^{\prime } \\
Z^{\prime } \\
1
\end{array}
\right) \\
\rho _{x}(b)=M_{x}(b)\left(
\begin{array}{c}
X \\
Y \\
Z \\
1
\end{array}
\right) =\left(
\begin{array}{c}
X^{\prime } \\
Y^{\prime } \\
Z^{\prime } \\
1
\end{array}
\right)
\end{array}
\end{equation*}
where,

\begin{enumerate}
\item For $\frac{\sqrt{3}}{2}<x<1$
\begin{equation}
M_{x}(a)=\left(
\begin{array}{cccc}
2x^{2}-1 & -2x\sqrt{1-x^{2}} & 0 & 0 \\
2x\sqrt{1-x^{2}} & 2x^{2}-1 & 0 & 0 \\
0 & 0 & 1 & \frac{\left( 3-4x^{2}\right) \sqrt{1-x^{2}}}{4x} \\
0 & 0 & 0 & 1
\end{array}
\right)  \label{emacase3}
\end{equation}
\begin{equation}
M_{x}(b)=
\begin{pmatrix}
2x^{2}-1 & \frac{x-2x^{3}}{\sqrt{1-x^{2}}} & x\sqrt{\frac{3-4x^{2}}{x^{2}-1}}
& -\frac{1}{2}\sqrt{4x^{2}-3} \\
\frac{x(2x^{2}-1)}{\sqrt{1-x^{2}}} & \frac{1+2x^{2}-4x^{4}}{2-2x^{2}} &
\frac{\sqrt{4x^{2}-3}(2x^{2}-1)}{2(1-x^{2})} & \frac{\sqrt{(4x^{2}-3)^{3}}}{
8x\sqrt{1-x^{2}}} \\
x\sqrt{\frac{3-4x^{2}}{x^{2}-1}} & \frac{\sqrt{4x^{2}-3}(2x^{2}-1)}{
2(1-x^{2})} & \frac{1-2x^{2}}{2x^{2}-2} & \frac{-3+10x^{2}-9x^{4}}{8x\sqrt{
1-x^{2}}} \\
0 & 0 & 0 & 1
\end{pmatrix}
\label{embcase3}
\end{equation}

\item For $x=1\Longrightarrow $ $y=\frac{1}{2}$
\begin{equation}
M_{x}(a)=\left(
\begin{array}{cccc}
1 & -2 & 2 & 0 \\
2 & -1 & 2 & -\frac{1}{4} \\
2 & -2 & 3 & -\frac{1}{4} \\
0 & 0 & 0 & 1
\end{array}
\right)  \label{emacase4}
\end{equation}
\begin{equation}
M_{x}(b)=
\begin{pmatrix}
1 & -\frac{1}{2} & -\frac{1}{2} & \frac{1}{2} \\
\frac{1}{2} & \frac{7}{8} & -\frac{1}{8} & \frac{1}{16} \\
-\frac{1}{2} & \frac{1}{8} & \frac{9}{8} & -\frac{1}{16} \\
0 & 0 & 0 & 1
\end{pmatrix}
\label{embcase4}
\end{equation}

\item For $x>1$, $y=\frac{2x^{2}-1}{2}>0$
\begin{equation}
M_{x}(a)=\left(
\begin{array}{cccc}
2x^{2}-1 & 0 & 2x\sqrt{x^{2}-1} & 0 \\
0 & 1 & 0 & \frac{\left( 3-4x^{2}\right) \sqrt{x^{2}-1}}{4x} \\
2x\sqrt{x^{2}-1} & 0 & 2x^{2}-1 & 0 \\
0 & 0 & 0 & 1
\end{array}
\right)  \label{emacase5}
\end{equation}
\begin{equation}
M_{x}(b)=
\begin{pmatrix}
2x^{2}-1 & x\sqrt{\frac{-3+4x^{2}}{x^{2}-1}} & -\frac{x(2x^{2}-1)}{\sqrt{
x^{2}-1}} & -\frac{1}{2}\sqrt{4x^{2}-3} \\
-x\sqrt{\frac{-3+4x^{2}}{x^{2}-1}} & \frac{1-2x^{2}}{2x^{2}-2} & \frac{\sqrt{
-3+4x^{2}}(2x^{2}-1)}{2(x^{2}-1)} & \frac{3-10x^{2}+8x^{4}}{8x\sqrt{x^{2}-1}}
\\
-\frac{x(2x^{2}-1)}{\sqrt{x^{2}-1}} & \frac{\sqrt{-3+4x^{2}}(2x^{2}-1)}{
2(x^{2}-1)} & \frac{1+2x^{2}-4x^{4}}{2-2x^{2}} & -\frac{\sqrt{(-3+4x^{2})^{3}
}}{8x\sqrt{x^{2}-1}} \\
0 & 0 & 0 & 1
\end{pmatrix}
\label{mbcase5}
\end{equation}
The distance $\delta $ and the angle $\omega $ between the axis of $\rho
_{x}(a)$ and $\rho _{x}(b)$ are given by
\begin{eqnarray*}
\delta &=&\frac{\sqrt{3-4x^{2}}}{4} \\
\cos \omega &=&\frac{2x^{2}-1}{2-2x^{2}}
\end{eqnarray*}
The shift $\sigma $ is
\begin{equation*}
\sigma =(\frac{3}{4x}-x)\sqrt{1-x^{2}}
\end{equation*}
\end{enumerate}

\begin{proof}
Theorem \ref{teorema4} gives the values of $A$ and $B$ as a function of $x$
and $y$ and also the values of $(A^{-}B^{-})^{-}$ in each case:
\begin{eqnarray*}
\frac{\sqrt{3}}{2} &<&x<1\Longrightarrow (A^{-}B^{-})^{-}=\sqrt{
y^{2}-(1-x^{2})^{2}}ij \\
x &=&1\Longrightarrow (A^{-}B^{-})^{-}=-yij \\
x &>&1\Longrightarrow (A^{-}B^{-})^{-}=\sqrt{y^{2}-(x^{2}-1)^{2}}ij
\end{eqnarray*}
From the polynomial defining the character variety \eqref{epoly1}
\begin{equation*}
2y-(2x^{2}-1)=0
\end{equation*}
we obtain the value of $y$
\begin{equation*}
y=\frac{2x^{2}-1}{2}
\end{equation*}
From the polynomial \eqref{epoly2} we obtain the value of $s$
\begin{equation*}
s=\frac{3-4x^{2}}{4x}
\end{equation*}
\cite[Th.5]{HLM2009} defines the representation $\rho
_{x}:G(3/1)\longrightarrow \mathcal{L}(E^{1,2})$ unique up to conjugation in
$\mathcal{L}(E^{1,2})$ such that
\begin{equation*}
\begin{array}{l}
\rho _{x}(a)=(sA^{-},A) \\
\rho _{x}(b)=((sB^{-}+(A^{-}B^{-})^{-},B)
\end{array}
\end{equation*}
\end{proof}
\end{theorem}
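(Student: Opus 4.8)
The plan is to run the Lorentzian counterpart of the argument already used for Theorem \ref{tgeneucl}, handling the three ranges $\frac{\sqrt{3}}{2}<x<1$, $x=1$ and $x>1$ in parallel. First I would invoke Theorem \ref{teorema4}: these three ranges are exactly the subcases (2.1), (2.5) and (2.2) of that theorem, so it supplies the unit quaternions $A,B\in SL(2,\mathbb{R})\subset\left(\frac{-1,1}{\mathbb{R}}\right)$ realizing $(x,y)$ together with the pure part $(A^-B^-)^-$ in each regime. Since we work on the character variety $V(\mathcal{I}_G)$, the defining polynomial \eqref{epoly1} forces $y=\frac{2x^2-1}{2}$, and substituting this value into the formulas of Theorem \ref{teorema4} yields the quaternions $A,B$ displayed in items (1)--(3) of the statement, with $(A^-B^-)^-$ equal to $\sqrt{y^2-(1-x^2)^2}\,ij$, $-y\,ij$ and $\sqrt{y^2-(x^2-1)^2}\,ij$ respectively.

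Next I would determine the shift. The affine consistency relation \eqref{erel2} was shown, for the trefoil presentation, to collapse to the single equation \eqref{epoly2}, namely $4x^2+4sx-3=0$; solving gives $s=\frac{3-4x^2}{4x}$, which is well defined throughout $x>\frac{\sqrt{3}}{2}$ since $x\neq 0$ there. With this value of $s$ the pair $(sA^-,A)$, $(sB^-+(A^-B^-)^-,B)$ is a genuine affine representation of $G(3_1)$, and \cite[Th.5]{HLM2009} guarantees that it is the unique affine deformation of the linear representation $\widehat{\rho}_x$, up to conjugation in the affine isometry group $\mathcal{L}(E^{1,2})=\mathcal{A}(H)$. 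This settles existence and uniqueness simultaneously in all three ranges.

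It then remains to exhibit the explicit $4\times4$ affine matrices. The $3\times3$ linear blocks are the conjugation matrices $m_x(a)$, $m_x(b)$ representing $c(A)$, $c(B)$ already computed in Cases 3, 4 and 5 of Section 3, written in the coordinate system attached to the basis $\{-ij,j,i\}$; the fourth column is obtained by expressing the translational parts $sA^-$ and $sB^-+(A^-B^-)^-$ in the same basis after substituting $s=\frac{3-4x^2}{4x}$. Assembling these blocks produces the matrices \eqref{emacase3}--\eqref{mbcase5}. Finally the geometric invariants follow from the general formulas: $\delta$ from \eqref{edelta}, $\cos\omega=y/u$ from \eqref{eomega}, and $\sigma=s\sqrt{u}$ from \eqref{esigma}, evaluated at $u=1-x^2$ and the above values of $y$ and $s$ (a short computation gives $u^2-y^2=\frac{3-4x^2}{4}$, hence $\delta=\frac{\sqrt{3-4x^2}}{4}$).

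The main obstacle is not conceptual but bookkeeping: one must carry the three geometric regimes (time-like, light-like and space-like axis $A^-$) at once, keeping track of which radicands---$\sqrt{4x^2-3}$ versus $\sqrt{3-4x^2}$, and $\sqrt{x^2-1}$ versus $\sqrt{1-x^2}$---are the real ones in each range, and checking that the single shift formula $s=\frac{3-4x^2}{4x}$ and the uniqueness statement of \cite[Th.5]{HLM2009} apply uniformly. Once the sign conventions are fixed, verification of the displayed matrices is a direct, if lengthy, computation, so the real content of the theorem rests on Theorem \ref{teorema4} and \cite[Th.5]{HLM2009}.
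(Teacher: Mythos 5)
Your proposal is correct and follows essentially the same route as the paper: Theorem \ref{teorema4} (subcases (2.1), (2.5), (2.2)) supplies $A$, $B$ and $(A^{-}B^{-})^{-}$, the character-variety polynomial \eqref{epoly1} fixes $y=\frac{2x^{2}-1}{2}$, the affine relation \eqref{epoly2} fixes $s=\frac{3-4x^{2}}{4x}$, and \cite[Th.5]{HLM2009} yields existence and uniqueness up to conjugation in $\mathcal{L}(E^{1,2})$. Your added bookkeeping (assembling the $4\times 4$ matrices from the linear blocks of Section 3 plus the translational column, and deriving $\delta$, $\cos\omega$, $\sigma$ from \eqref{edelta}, \eqref{eomega}, \eqref{esigma}) only makes explicit what the paper leaves implicit.
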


As in the Euclidean case we are interested in representations in $\mathcal{L}
(E^{1,2})$ whose image is an \emph{affine crystallographic group}. This
concept is defined by Fried and Goldman \cite{FG1983} as a subgroup of the affine group $
A(H)$ acting properly discontinuously and with compact orbit space. An
affine crystallographic group is the fundamental group of a flat affine
manifold. The following theorem analyzes some examples in case 1 $(x<1)$ of
Theorem \ref{tgenlorentz}, where generators go to elements with linear part $
A$ and $B$ such that $A^{-}$ and $B^{-}$ are timelike vectors.

\begin{theorem}
The image of the representation $\rho _{x}:G(3/1)\longrightarrow \mathcal{L}
(E^{1,2})$, where $x=\cos \frac{2\pi }{2n}$, $n\geq 7$, is not a properly
discontinuous subgroup of $\mathcal{L}(E^{1,2})$.
\end{theorem}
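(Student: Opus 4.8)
The plan is to recognize that for these distinguished values of $x$ the linear part of $\rho_x$ is a cocompact lattice in $SO^0(1,2)$, and then to invoke Mess's theorem (together with the Margulis invariant) to rule out proper discontinuity. First I would note that $x=\cos\frac{2\pi}{2n}=\cos\frac{\pi}{n}$ with $n\geq 7$ satisfies $\frac{\sqrt3}{2}<x<1$ (since $\frac{\sqrt3}{2}=\cos\frac{\pi}{6}$), so $\rho_x$ falls under Case 3 of Theorem \ref{tgenlorentz}. By Theorem \ref{tgenso12} its linear part $\widehat{\rho}_x'=c\circ\pi_2\circ\rho_x$ is the holonomy of the hyperbolic cone-manifold $S^2_{\frac{2\pi}{2},\frac{2\pi}{3},\alpha}$ with $x=\cos\frac{\alpha}{2}$, hence $\alpha=\frac{2\pi}{n}$. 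For this value of $\alpha$ the cone points become orbifold points of order $n$, so the image $\Gamma_0:=\widehat{\rho}_x'(G(3_1))\subset SO^0(1,2)$ is precisely the holonomy of the orbifold $S_{23n}$, i.e. the orientation-preserving $(2,3,n)$ von Dyck group generated by the elliptic elements $\rho_x'(D)$ and $\rho_x'(F)$ of orders $2$ and $3$. Since $\tfrac12+\tfrac13+\tfrac1n<1$ for $n\geq 7$, the group $\Gamma_0$ is a cocompact Fuchsian group, that is, a lattice in $SO^0(1,2)\cong PSL(2,\mathbb{R})$.

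Next I would observe that $\rho_x$ realizes $\Gamma_0$ as the linear part of an affine action on $H_0\cong E^{1,2}$: the action $\Psi$ of $\mathcal{L}(E^{1,2})=H_0\rtimes U_1$ factors through the homomorphism $c$, so the group $\Gamma:=\rho_x(G(3_1))$ acts on $E^{1,2}$ with linear part exactly $\Gamma_0$ and with a genuinely nontrivial translational deformation. Indeed, on the range $\frac{\sqrt3}{2}<x<1$ one has $3-4x^2<0$, so the shift $\sigma=(\frac{3}{4x}-x)\sqrt{1-x^2}$ is nonzero; consequently $\rho_x(a)=(sA^-,A)$ is a screw motion of infinite order, whereas its linear part $\rho_x'(a)$ is a rotation of finite order $n$. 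Thus $\Gamma$ is a bona fide affine deformation of the cocompact Fuchsian group $\Gamma_0$, not merely a sub-action of $SO^0(1,2)$.

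The conclusion then follows from Mess's theorem (\cite{Mess1990}, \cite{GM2000}): a cocompact Fuchsian group admits no properly discontinuous affine deformation in $E^{2,1}$, equivalently there is no complete flat Lorentz $3$-manifold whose holonomy has cocompact Fuchsian linear part. The mechanism driving this is Margulis's opposite-sign lemma (\cite{Mar1983}, \cite{Mar1984}): a properly discontinuous affine action forces the Margulis invariant $\gamma\mapsto\alpha(\gamma)$ to have constant sign on the elements whose linear part is hyperbolic, whereas for a cocompact linear part a dynamical argument (equivalently, the non-existence of the corresponding closed flat Lorentz orbifold) shows that $\alpha$ must assume both signs. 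Since $\Gamma$ has linear part the lattice $\Gamma_0$ and contains elements with hyperbolic linear part, it cannot act properly discontinuously on $E^{1,2}$, which is the assertion of the theorem.

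The step requiring the most care is the reconciliation of Mess's theorem with the torsion present in $\Gamma_0$: Mess's original formulation concerns torsion-free cocompact surface groups, while the von Dyck group $\Delta^+(2,3,n)$ has elements of orders $2$ and $3$ and, moreover, $\Gamma$ is an extension $1\to K\to\Gamma\to\Gamma_0\to 1$ with $K$ a nontrivial group of translations rather than a clean section of the linear part. I would handle this either by appealing to the orbifold form of the Goldman--Margulis result in \cite{GM2000}, which is stated for cocompact Fuchsian groups directly, or by invoking Selberg's lemma to pass to a torsion-free finite-index subgroup $\Lambda\leq\Gamma_0$ (a cocompact surface group) and its preimage $H=L^{-1}(\Lambda)$ in $\Gamma$; since proper discontinuity passes to subgroups, a properly discontinuous $\Gamma$ would yield a properly discontinuous affine action with torsion-free cocompact surface-group linear part, contradicting Mess. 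Verifying that $H$ has the required form as an affine deformation of $\Lambda$, and keeping the two-to-one covering $c:SL(2,\mathbb{R})\to SO^0(1,2)$ consistent throughout, are the bookkeeping points that the full proof must settle.
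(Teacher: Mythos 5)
Your proposal is correct and follows essentially the same route as the paper: identify the linear part of $\rho_x$ as the cocompact Fuchsian group $S_{23n}$ (the holonomy of the hyperbolic orbifold $S_{23n}$) and conclude by Mess's theorem (\cite{Mess1990}, \cite{GM2000}) that the image cannot act properly discontinuously on $E^{1,2}$. Your additional bookkeeping --- checking $\frac{\sqrt3}{2}<x<1$, the nonvanishing shift, and the torsion issue handled via the orbifold form of Goldman--Margulis or Selberg's lemma --- is a careful elaboration of steps the paper leaves implicit, not a different argument.
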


\begin{proof}
The linear quotient of $\rho _{x}(G(3/1))$ is the group $S_{23n}\subset
SO(1,2)$. The group $S_{23n}$, been the holonomy group of the 2-dimensional
hyperbolic orbifold denoted by the same name $S_{23n}$, is a cocompact
group, therefore Mess's Theorem (\cite{Mess1990} ,\cite{GM2000}) says that
$Im(\rho _{x})$ is not a properly discontinuous subgroup of $\mathcal{
L}(E^{1,2})$.
\end{proof}

To analyze the discrete condition of representations in case 3 $(x>1)$ of
Theorem \ref{tgenlorentz}, we can use the Margulis invariant
(\cite{Mar1983}, \cite{Mar1984}, \cite{GM2000}).
Recall that an element of $O(1,2)$ is hyperbolic if it has three distinct
real eigenvalues. A subgroup $G\subset $ $O(1,2)$ is purely hyperbolic if
every element is hyperbolic.

\begin{theorem}
Every image of $\rho _{x}:G(3/1)\longrightarrow \mathcal{L}(E^{1,2})$, where
$x>1$ contains an affine deformation of a purely hyperbolic subgroup of
finite index.
\end{theorem}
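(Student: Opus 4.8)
The plan is to promote the finite-index free subgroup produced in the proof of Theorem \ref{tsubgrupolibre} to the affine setting, and then to verify that for $x>1$ its \emph{linear part} is purely hyperbolic. First I would assemble the ingredients already available. For $x>1$ the map $\rho_x$ is the affine deformation described in Theorem \ref{tgenlorentz}(3), with independent linear parts $A,B\in SL(2,\mathbb{R})$; by the theorem asserting that every such $\rho_x$ factors through $C_2\ast C_3=|F,D;F^3=D^2=1|$, I may write $\rho_x=\lambda_x\circ q$ with $q:G(3_1)\to C_2\ast C_3$. Inside $C_2\ast C_3$ the proof of Theorem \ref{tsubgrupolibre} exhibits the index-$6$ free subgroup $\Lambda=\langle g_1,g_2\rangle$ with $g_1=FDFD=b^{-2}$ and $g_2=F^{-1}DF^{-1}D=a^{2}$, realized geometrically as $\pi_1^o(O_{2\delta,2\delta,2\delta})$ inside the $6{:}1$ cover $O_{2\delta,2\delta,2\delta}\to O_{23\delta}$; on linear parts $g_1,g_2$ map to $B^{-2},A^{2}$.

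Next I would set $\Gamma:=\lambda_x(\Lambda)=\langle \rho_x(b^{-2}),\rho_x(a^{2})\rangle\subset\mathcal{L}(E^{1,2})$. Its linear part is $\pi_2(\Gamma)=\langle B^{-2},A^{2}\rangle=:\Gamma_0$, exactly the free rank-$2$ group of Theorem \ref{tsubgrupolibre}. Because the linear holonomy $\pi_2\circ\lambda_x$ is the holonomy of the complete hyperbolic orbifold $O_{23\delta}$ (Theorem \ref{tgenso122}), it is discrete and faithful; hence $\lambda_x$ is injective, $\Gamma$ has index exactly $6$ in $\mathrm{Im}(\rho_x)=\lambda_x(C_2\ast C_3)$, and $\pi_2$ restricts to an isomorphism $\Gamma\xrightarrow{\sim}\Gamma_0$. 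Consequently $\Gamma$ is a genuine \emph{affine deformation} of the linear group $\Gamma_0$, i.e.\ the graph of a translational cocycle $\Gamma_0\to E^{1,2}$, and it is a finite-index subgroup of $\mathrm{Im}(\rho_x)$.

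The decisive point, and the only place where the hypothesis $x>1$ is used, is to check that $\Gamma_0$ is \emph{purely hyperbolic}: every non-identity element has three distinct real eigenvalues. For $\delta>0$ (equivalently $x=\cosh\frac{\delta}{2}>1$) Theorem \ref{tgenso122} identifies $\Gamma_0$ with the holonomy of $O_{2\delta,2\delta,2\delta}$, a three-holed sphere carrying a complete hyperbolic metric whose three ends are funnels bounded by closed geodesics of length $2\delta$; there are no cusps and no cone points. Such a surface group is a convex-cocompact free Fuchsian subgroup of $SO^{0}(1,2)$, so every non-trivial element is a hyperbolic translation along a geodesic, hence a hyperbolic element of $O(1,2)$ in the sense recalled just before the statement. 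Equivalently, $A^{2}$ and $B^{-2}$ are boosts (squares of the space-like right hyperbolic rotations of Case $5$, whose matrices in Theorem \ref{tgenlorentz}(3) are Lorentz boosts), and the Schottky ping-pong furnished by the funnel decomposition shows that every reduced word is again a boost.

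Putting these together yields the assertion: $\Gamma$ is an affine deformation of the purely hyperbolic group $\Gamma_0$, and $\Gamma$ sits in $\mathrm{Im}(\rho_x)$ with finite index. I expect the pure hyperbolicity — ruling out parabolic or elliptic elements among the infinitely many reduced words — to be the crux, and I would settle it by the geometric route above (convex cocompactness of the funneled pants $O_{2\delta,2\delta,2\delta}$) rather than by a direct eigenvalue computation. This is precisely the configuration needed to bring the Margulis invariant to bear on the proper discontinuity of $\mathrm{Im}(\rho_x)$ in the sequel.
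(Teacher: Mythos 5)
Your proposal is correct and follows essentially the same route as the paper: take the index-$6$ free subgroup $\langle B^{-2},A^{2}\rangle$ from Theorem \ref{tsubgrupolibre}, observe it is purely hyperbolic, and conclude that $\langle \rho_x(b^{-2}),\rho_x(a^{2})\rangle$ is the desired finite-index affine deformation. The only difference is that the paper simply asserts that all elements of the linear subgroup are hyperbolic, whereas you justify it via the convex-cocompact (funnel-ended, cusp-free) structure of $O_{2\delta,2\delta,2\delta}$ — a worthwhile addition, but the same argument in substance.
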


\begin{proof}
Theorem \ref{tsubgrupolibre} shows that the image of the linear quotient of $
\rho _{x}$, $\rho _{x}^{\prime }=\pi _{2}\circ \rho
_{x}:G(3/1)\longrightarrow SO^{0}(1,2)$, $1<x<\infty $, is an index 6 free
subgroup  generated by $\{\rho _{x}^{\prime }(b^{-2}),$ $\rho _{x}^{\prime
}(a^{2})\}$. All the elements of this subgroup are hyperbolic
transformations.

We conclude that the subgroup of $\rho _{x}(G(3/1))$ generated by $\{\rho
_{x}(b^{-2}),$ $\rho _{x}(a^{2})\}$ is an affine deformation of the purely
hyperbolic free subgroup of rank 2 generated by $\{B^{-2},A^{2}\}$.
\end{proof}

Margulis defines an invariant $\alpha _{\phi }:G\longrightarrow \mathbb{R}$
of an affine deformation $\phi $ of a purely hyperbolic subgroup $G\subset
SO^{0}(1,2)$ as follows. Every element $g\in G$ has three distinct positive
real eigenvalues $\lambda (g)<1<\lambda (g)^{-1}$. Choose an eigenvector $
x^{-}(g)$ for $\lambda (g)$ and an eigenvector $x^{+}(g)$ for $\lambda
(g)^{-1}$, both in the same component $\mathcal{N}_{+}$ of the complement of
$0$ in the nullcone. Consider the unique eigenvector $x^{0}(g)$ for $g$ with
eigenvalue $1$ such that $|x^{0}(g)|=-1$ and $\{x^{-}(g),x^{+}(g),x^{0}(g)\}$
is a positively oriented basis of $E^{1,2}$.

If $\phi $ is a hyperbolic deformation of $G$, then $\alpha _{\phi }$ is
defined as
\begin{equation*}
\begin{array}{cccc}
\alpha _{\phi }: & G & \longrightarrow & \mathbb{R} \\
& g & \rightarrow & Q(x^{0}(g),\phi (g)(x)-x)
\end{array}
\end{equation*}
for any $x\in E^{1,2}$, where $Q(-,-)$ is the bilinear quadratic form
defining the Minkowski metric. It has been proven (\cite{DG2001}) that $\alpha $
is a complete invariant of conjugacy class of the affine deformation. The
following theorem of Margulis can be used to check the
proper condition of an affine deformation.

\begin{theorem}[Margulis] Let $G$ be a purely hyperbolic subgroup of $SO^{0}(1,2)$, and $
\phi :G\longrightarrow \mathcal{L}(E^{1,2})$ an affine deformation. If there
exist $g_{1},g_{2}\in G$ such that $\alpha _{\phi }(g_{1})>0>\alpha _{\phi
}(g_{2})$, then $\phi $ is not proper.
\end{theorem}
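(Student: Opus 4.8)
The plan is to prove Margulis's non-properness criterion by producing infinitely many distinct group elements whose affine action fails to push a fixed compact set off itself. First I would pin down the geometric meaning of the invariant. Writing $\phi(g)=(t_g,g)$ with hyperbolic linear part $g\in SO^{0}(1,2)$ and translational part $t_g\in E^{1,2}$, a one-line computation using $g\,x^{0}(g)=x^{0}(g)$ and $g\in O(Q)$ gives $Q(x^{0}(g),\phi(g)(x)-x)=Q(x^{0}(g),(g-I)x)+Q(x^{0}(g),t_g)=Q(x^{0}(g),t_g)$, independent of $x$; hence $\alpha_{\phi}(g)$ is exactly the $x^{0}$-component of $t_g$. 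Consequently $\phi(g)$ preserves a unique affine line $C_g$ parallel to the spacelike neutral eigenvector $x^{0}(g)$ and translates along it by signed length $\alpha_{\phi}(g)$, so every point of $C_g$ is displaced by precisely $|\alpha_{\phi}(g)|$. The same computation, applied to $t_{g^{n}}=\sum_{j=0}^{n-1}g^{j}t_g$, yields the algebraic fact I will use later, $\alpha_{\phi}(g^{n})=n\,\alpha_{\phi}(g)$.

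Next I would reduce to a free two-generator situation. After replacing $g_{1},g_{2}$ by suitable positive powers (which only scales their invariants by positive integers and so preserves the sign hypothesis), I may assume their linear parts are in Schottky/ping-pong position, with the attracting and repelling null directions of $g_{1}^{\pm1},g_{2}^{\pm1}$ pairwise distinct and separated. The engine of the proof is then a uniform quasi-additivity estimate: for the words $w_{m,n}=g_{1}^{m}g_{2}^{n}$ with $m,n\geq1$ one has $\alpha_{\phi}(w_{m,n})=m\,\alpha_{\phi}(g_{1})+n\,\alpha_{\phi}(g_{2})+E_{m,n}$ with $|E_{m,n}|\leq K$ for a constant $K$ independent of $m,n$. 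This should follow from north--south dynamics: as $m,n\to\infty$ the attracting/repelling null directions and the neutral direction $x^{0}(w_{m,n})$ converge to those attached to $g_{1}$ and $g_{2}$, while the translational contribution splits, up to a bounded error, into the sum of the contributions of the two syllables. In the same breath I would record that the axes $C_{w_{m,n}}$ stay within a bounded neighborhood of the origin, since their directions converge and their base points are governed by the fixed affine maps $\phi(g_{1}),\phi(g_{2})$.

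With these estimates the contradiction is quick. Because $\alpha_{\phi}(g_{1})>0>\alpha_{\phi}(g_{2})$, I can choose sequences $m_{k},n_{k}\to\infty$ with the ratio $m_{k}/n_{k}$ near $-\alpha_{\phi}(g_{2})/\alpha_{\phi}(g_{1})$, so that $|m_{k}\,\alpha_{\phi}(g_{1})+n_{k}\,\alpha_{\phi}(g_{2})|\leq K'$ for all $k$. Then $|\alpha_{\phi}(w_{m_{k},n_{k}})|\leq K+K'$, so each $\phi(w_{m_{k},n_{k}})$ fixes a spacelike axis lying in a bounded ball and translates along it by a uniformly bounded amount. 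Hence a single compact ball $C$ about the origin satisfies $\phi(w_{m_{k},n_{k}})(C)\cap C\neq\varnothing$ for every $k$; since the reduced words $w_{m_{k},n_{k}}$ are pairwise distinct in the free group, the set $\{g:\phi(g)C\cap C\neq\varnothing\}$ is infinite, contradicting proper discontinuity.

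The hard part will be the uniform estimate of the second paragraph --- both the $O(1)$ bound on $E_{m,n}$ and, equally important, the claim that the axes $C_{w_{m,n}}$ do not escape to infinity. Bounding the invariant alone controls only the displacement along the neutral direction; to force a genuine recurrence of a compact set one must simultaneously control the position of the axis, and it is exactly here that the Schottky hypothesis and the quantitative convergence of the null eigendirections must be exploited. Everything else --- the geometric interpretation of $\alpha_{\phi}$, additivity under powers, and the final counting contradiction --- is routine once this dynamical estimate is in hand.
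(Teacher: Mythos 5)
First, a point of comparison: the paper does not prove this statement at all. It is quoted as Margulis's theorem, with the proof living in \cite{Mar1983}, \cite{Mar1984} and the account in \cite{GM2000}, so your proposal can only be judged against the literature. Measured that way, your outline faithfully reproduces the architecture of Margulis's original ``opposite sign'' argument: the computation $\alpha_{\phi}(g)=Q(x^{0}(g),t_{g})$ showing independence of $x$, the interpretation of $\alpha_{\phi}(g)$ as the signed displacement along the unique $\phi(g)$-invariant spacelike line, the homogeneity $\alpha_{\phi}(g^{n})=n\,\alpha_{\phi}(g)$, and the endgame in which one balances $m\,\alpha_{\phi}(g_{1})+n\,\alpha_{\phi}(g_{2})$ to stay bounded and produces infinitely many group elements returning a fixed compact set to itself, contradicting properness. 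That counting step is sound, granted the two estimates you invoke. (One degenerate case is unaddressed: passing to powers puts $g_{1},g_{2}$ in ping-pong position only if their linear parts have disjoint fixed-point pairs on the circle at infinity; if they share an axis the group is elementary and must be handled separately.)

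The genuine gap is the one you flag yourself: the uniform bound $|E_{m,n}|\leq K$ on the failure of additivity, together with the claim that the invariant axes $C_{w_{m,n}}$ stay in a fixed compact set, is asserted, not proved --- and this is not a routine remainder but essentially the entire technical content of Margulis's proof. It does not simply ``follow from north--south dynamics'': one needs quantitative, uniform-in-$(m,n)$ statements --- uniform transversality of the attracting and repelling null directions of the two syllables, a convergence rate of the eigendirections of $w_{m,n}$ to those of $g_{1}$ and $g_{2}$, and from these a bound on where the neutral eigenvector and the base point of the invariant affine line sit. As you correctly note, bounding $\alpha_{\phi}(w_{m,n})$ alone controls only the displacement along the axis and says nothing about the location of the axis, so without these estimates the final compactness contradiction cannot be run. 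As written, the proposal is an accurate plan of Margulis's proof with its technical core left as a black box; it is not yet a proof.
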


\begin{theorem}
The image of the representation $\rho _{x}:G(3/1)\longrightarrow \mathcal{L}
(E^{1,2})$, where $x>1$, is not a properly discontinuous subgroup of $
\mathcal{L}(E^{1,2})$.

\begin{proof}
We compute the Margulis invariants of the elements $g_{2}=\widehat{\rho }
_{x}^{\prime }(a^{2})$ and $g_{3}=\widehat{\rho }_{x}^{\prime }(a^{2}b^{2})$
in the purely hyperbolic group $\pi _{1}^{o}(O_{2\delta ,2\delta ,2\delta
})\subset \widehat{\rho }_{x}^{\prime }(G(3/1))$. Observe that the element
\begin{equation*}
g_{2}=\widehat{\rho }_{x}^{\prime }(a^{2})=
\begin{pmatrix}
1-8x^{2}+8x^{4} & 0 & 4x\sqrt{x^{2}-1}(2x^{2}-1) \\
0 & 1 & 0 \\
4x\sqrt{x^{2}-1}(2x^{2}-1) & 0 & 1-8x^{2}+8x^{4}
\end{pmatrix}
\end{equation*}
has the following eigenvalues
\begin{eqnarray*}
\lambda (g_{2}) &=&1-8x^{2}+8x^{4}-\sqrt{(1-8x^{2}+8x^{4})^{2}-1}<1 \\
1 &<&\left( \lambda (g_{2})\right) ^{-1}=1-8x^{2}+8x^{4}+\sqrt{
(1-8x^{2}+8x^{4})^{2}-1}.
\end{eqnarray*}
We choose the eigenvector $x^{-}(g_{2})=\{-1,0,1\},$ $x^{+}(g_{2})=\{1,0,1\}.
$ Therefore the vector $x^{0}(g_{2})=\{0,1,0\}$ is the unique eigenvector
such that $|x^{0}(g)|=-1$ and $\{x^{-}(g),x^{+}(g),x^{0}(g)\}$ is a
positively oriented basis of $E^{1,2}$. The Margulis invariant $\alpha
_{\phi _{x}}(g_{2})=Q(x^{0}(g),\rho _{x}(a^{2})(x)-x)$ for any $x\in E^{1,2}$
is
\begin{equation*}
\alpha _{\phi _{x}}(g_{2})=(0,1,0)
\begin{pmatrix}
-1 & 0 & 0 \\
0 & -1 & 0 \\
0 & 0 & 1
\end{pmatrix}
\begin{pmatrix}
s_{1} \\
s_{2} \\
s_{3}
\end{pmatrix}
=-s_{2}
\end{equation*}
where $\{s_{1},s_{2},s_{3}\}$ are the $\{X,Y,Z\}-$coordinates of $\rho
_{x}(a^{2})(x)-x$ for $x=\{t_{1},t_{2},t_{3}\}$. The computation gives
\begin{equation*}
s_{2}=\frac{(3-4x^{2})\sqrt{x^{2}-1}}{2x}.
\end{equation*}
Therefore
\begin{equation*}
\alpha _{\phi }(g_{2})=-\frac{(3-4x^{2})\sqrt{x^{2}-1}}{2x}.
\end{equation*}
This value is only zero for $x=\pm \frac{\sqrt{3}}{2},\pm 1$, then it has
the same sign for all $x>1.$ For $x=2$, it is $\frac{13\sqrt{3}}{4}>0.$ Then
\begin{equation*}
\alpha _{\phi _{x}}(g_{2})>0,\quad x>1.
\end{equation*}
We have used the computer program Mathematica to do an analogous, but much
more complicate computation for $g_{3}=\widehat{\rho }_{x}^{\prime
}(a^{2}b^{2})$. We found
\begin{multline*}
\alpha _{\phi _{x}}(g_{3})=\\
\frac{3+x^{2}(-1-22x^{2}+36x^{4}-16x^{6}-8x\sqrt{
x^{2}-1}(9+4x^{2}(-21+63x^{2}-76x^{4}+32x^{6})))}{8x\sqrt{(x^{2}-1)^{5}}}
\end{multline*}
which is never zero for $x>1$, and for $x=2$ it is equal to $-\frac{
143(15+7616\sqrt{3}}{144\sqrt{3}}<0$ . Therefore $\alpha _{\rho
_{x}}(g_{3})<0$, for all $x>1$. We conclude that $\alpha _{\rho
_{x}}(g_{2})>0>\alpha _{\rho _{x}}(g_{3})$ for all $x>1$. Then we apply the
above Margulis's Theorem to deduce that $\rho _{x}(G(3/1))$ contains a
subgroup with no proper action.
\end{proof}
\end{theorem}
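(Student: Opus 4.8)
The plan is to apply the opposite-sign criterion of Margulis's Theorem to a purely hyperbolic finite-index subgroup. I would begin by observing that a properly discontinuous action restricts to a properly discontinuous action on every subgroup, so it suffices to exhibit a finite-index subgroup of $\rho_x(G(3/1))$ that fails to act properly. The preceding theorem supplies exactly the right object: the subgroup generated by $\{\rho_x(b^{-2}),\rho_x(a^2)\}$ is an affine deformation $\phi_x$ of the rank-two purely hyperbolic free subgroup $\pi_1^o(O_{2\delta,2\delta,2\delta})\subset\widehat{\rho}_x'(G(3/1))$ generated by $\{B^{-2},A^2\}$, and Theorem \ref{tsubgrupolibre} shows this free subgroup has index $6$. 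Consequently $\alpha_{\phi_x}$ is defined, and Margulis's Theorem reduces the whole problem to producing two elements of the free group whose Margulis invariants have opposite signs.

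For the first element I would take $g_2=\widehat{\rho}_x'(a^2)$, whose linear part $A^2$ is hyperbolic. Because this matrix fixes the $Y$-direction and has a clean block form, the eigendata are transparent: the null eigenvectors normalize to $x^-(g_2)=\{-1,0,1\}$ and $x^+(g_2)=\{1,0,1\}$, forcing $x^0(g_2)=\{0,1,0\}$ by the conditions $|x^0|=-1$ and positive orientation. Pairing $x^0(g_2)$ against $\rho_x(a^2)(x)-x$ through the Minkowski form then collapses to reading off a single coordinate, giving $\alpha_{\phi_x}(g_2)=-\tfrac{(3-4x^2)\sqrt{x^2-1}}{2x}$, which is strictly positive for every $x>1$, since $3-4x^2<0$ there while the remaining factors are positive.

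For the second element I would take $g_3=\widehat{\rho}_x'(a^2b^2)$, which lies in the same free subgroup since $a^2b^2=A^2(B^{-2})^{-1}$. Here the linear part has no coordinate symmetry to exploit, so determining the normalized eigenbasis $\{x^-(g_3),x^+(g_3),x^0(g_3)\}$ and evaluating $Q(x^0(g_3),\rho_x(a^2b^2)(x)-x)$ becomes a genuinely heavy symbolic computation, which I would carry out with computer algebra using the explicit affine matrices of Theorem \ref{tgenlorentz}. The resulting expression is a rational-algebraic function of $x$; I would verify that it never vanishes for $x>1$ and is negative at $x=2$, and conclude by continuity that $\alpha_{\phi_x}(g_3)<0$ throughout $x>1$. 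This sign analysis for $g_3$ is the main obstacle: unlike $g_2$, it cannot be reduced to a one-line computation and hinges on the full form of the affine representation.

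With $\alpha_{\phi_x}(g_2)>0>\alpha_{\phi_x}(g_3)$ established for all $x>1$, Margulis's Theorem gives that the affine deformation $\phi_x$ is not proper. Since $\phi_x$ has finite index in $\rho_x(G(3/1))$, the full image cannot be properly discontinuous, which is the assertion of the theorem.
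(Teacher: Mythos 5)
Your proposal is correct and follows essentially the same route as the paper: you use the same finite-index purely hyperbolic free subgroup generated by $\{B^{-2},A^{2}\}$, the same two test elements $g_{2}=\widehat{\rho}_{x}'(a^{2})$ and $g_{3}=\widehat{\rho}_{x}'(a^{2}b^{2})$, the same hand computation yielding $\alpha_{\phi_{x}}(g_{2})=-\frac{(3-4x^{2})\sqrt{x^{2}-1}}{2x}>0$, the same computer-algebra verification that $\alpha_{\phi_{x}}(g_{3})$ is nonvanishing for $x>1$ and negative at $x=2$, and the same appeal to Margulis's opposite-sign criterion. The only (harmless) additions are your explicit remarks that proper discontinuity passes to subgroups and that $a^{2}b^{2}=A^{2}(B^{-2})^{-1}$ lies in the free subgroup, both of which the paper leaves implicit.
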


For case 2 $(x=1)$ in Theorem \ref{tgenlorentz}, where generators go to
elements with linear part $A$ and $B$ such that $A^{-}$ and $B^{-}$ are null
vectors, we know by Theorem \ref{tsubgrupolibre} that the image of $\rho
_{1}:G(3_{1})\longrightarrow \mathcal{L}(E^{1,2})$, contains an affine
deformation of a free subgroup of index 6 generated by the two parabolic
elements $\{B^{-2},A^{2}\}$. To analyze the discrete condition of the
representation $\rho _{1}:G(3_{1})\longrightarrow \mathcal{L}(E^{1,2})$ one
could use the generalization of the Margulis invariant method obtained in \cite{CD2005}
 for subgroups
generated by two parabolic elements. But in this case it does not work
because the generalized Margulis invariant of the parabolics elements $A^{2}$
and $B^{-2}$ is $0,$ as is the Margulis invariant of hyperbolic elements $
A^{2n}B^{2}$. The reason is that each of the these elements has a line of
fixed points. Moreover this property for some element implies a non properly
discontinuously action. Therefore we can establish the following theorem.

\begin{theorem}
The image of the representation $\rho _{1}:G(3_{1})\longrightarrow \mathcal{L
}(E^{1,2})$, is not a properly discontinuous subgroup of $\mathcal{L}
(E^{1,2})$.
\end{theorem}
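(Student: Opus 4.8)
The plan is to bypass the Margulis-invariant machinery entirely and argue directly that some infinite-order element of the image fixes a point, which is incompatible with proper discontinuity. First I would recall, from Theorem \ref{tsubgrupolibre} and the discussion preceding the statement, that $\rho_1(G(3_1))$ contains the index-$6$ free subgroup generated by the affine deformations $\rho_1(a^2)$ and $\rho_1(b^{-2})$, whose linear parts are parabolic. The natural tool here would be the Charette--Drumm generalization \cite{CD2005} of the Margulis invariant for groups generated by parabolics, but as already observed it is unavailable, because that invariant vanishes on $\rho_1(a^2)$, on $\rho_1(b^{-2})$, and on the hyperbolic elements $\rho_1(a^{2n}b^2)$.

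The key step is to reinterpret this vanishing geometrically: vanishing of the (generalized) Margulis invariant is precisely the statement that the element has a fixed point. Concretely, using the explicit affine matrix $M_1(a)$ of \eqref{emacase4}, I would solve the fixed-point equation $(M-I)v=-t$, where $M$ and $t$ are the linear and translational parts of $\rho_1(a)$. Since the linear part is parabolic, $\ker(M-I)$ is the one-dimensional light-like direction, and a direct check shows that the translational part lies in $\mathrm{Im}(M-I)$; the fixed locus is therefore an entire affine line, namely $\{(\tfrac18,t,t):t\in\mathbb{R}\}$, which is then also fixed pointwise by $\rho_1(a^2)=\rho_1(a)^2$. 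The same computation applied to $M_1(b)$ of \eqref{embcase4} and to the products defining $\rho_1(a^{2n}b^2)$ yields a fixed line for each of those elements as well.

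With a fixed line in hand the conclusion is immediate. A properly discontinuous action has finite point-stabilizers, as one sees by specializing the defining condition to the compact set consisting of a single point $p$. But $\rho_1(a)$ has parabolic linear part, hence infinite order, while fixing every point of the line above; thus the stabilizer of any such point $p$ contains the infinite set $\{\rho_1(a^n):n\in\mathbb{Z}\}$. This contradicts proper discontinuity and proves the theorem.

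The main obstacle is conceptual rather than computational: the route one would first attempt---feeding the generators into the Margulis/Charette--Drumm criterion and reading off a sign change of the invariant---is silent here because the invariant is identically zero, so I would have to recognize that this very degeneracy is what forces a fixed point, and then invoke the elementary stabilizer argument instead. The only point requiring genuine verification is that the fixed locus is nonempty, that is, that the translational part actually lands in $\mathrm{Im}(M-I)$ rather than producing a screw-type motion with empty fixed set; the explicit matrices of Theorem \ref{tgenlorentz} settle this, and no deeper analysis is needed.
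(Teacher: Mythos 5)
Your proposal is correct and is essentially the paper's own argument: the paper likewise observes that the elements $\rho_1(a^n)$ fix every point of the line $(\tfrac18,t,t)$ and concludes that the action near this line cannot be properly discontinuous. Your solution of $(M-I)v=-t$ from the matrix \eqref{emacase4} reproduces exactly that fixed line, and your stabilizer argument (an infinite-order element fixing a point violates proper discontinuity applied to the compact set $\{p\}$) is just a spelled-out version of the paper's final sentence.
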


\begin{proof}
The elements
\begin{equation*}
\rho _{1}(a^{n})=\left(
\begin{array}{cccc}
1 & -2 & 2 & 0 \\
2 & -1 & 2 & -\frac{1}{4} \\
2 & -2 & 3 & -\frac{1}{4} \\
0 & 0 & 0 & 1
\end{array}
\right) ^{n}
\end{equation*}
fix each point in the line $(1/8,t,t)$, and the action in a neighborhood of
this line is not discontinuous.
\end{proof}

\begin{corollary}
\label{cor20}There is no affine crystallographic group in $\mathcal{L}
(E^{1,2})$ which is a quotient of $G(3_{1}).$
\end{corollary}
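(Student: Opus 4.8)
The plan is to combine the classification of Lorentzian representations of $G(3_1)$ with the non-proper-discontinuity results just established, using a single structural fact about three-dimensional affine crystallographic groups to close the remaining gaps. First I would record the elementary reduction: any homomorphism $\rho:G(3_1)\to\mathcal{L}(E^{1,2})$ has linear part $\widehat{\rho}=\pi_2\circ\rho:G(3_1)\to SL(2,\mathbb{R})=U_1$, which is a c-representation realizing a point $(x,y)$ of $V(\mathcal{I}_G)$ with $y=\tfrac{2x^2-1}{2}$. Since a crystallographic group acts properly discontinuously (and cocompactly), it suffices to show that no such $\rho$ has properly discontinuous image. By Theorem \ref{tgenlorentz} the representations whose linear part genuinely lands in $SL(2,\mathbb{R})$ are those with $|x|\ge\sqrt{3}/2$, namely Case 3 ($\sqrt{3}/2<|x|<1$), Case 4 ($|x|=1$) and Case 5 ($|x|>1$), together with the reducible boundary $|x|=\sqrt{3}/2$.

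Next I would dispose of the three genuinely Lorentzian cases. Cases 4 and 5 are exactly the two theorems immediately preceding this corollary: for $x=1$ the powers $\rho_1(a^n)$ fix a common line, and for $x>1$ the Margulis invariants of $\widehat{\rho}_x(a^2)$ and $\widehat{\rho}_x(a^2b^2)$ have opposite signs, so in both situations the image is not properly discontinuous. For Case 3 the linear part $\widehat{\rho}_x(G(3_1))$ is the holonomy of the hyperbolic cone-manifold $S_{\frac{2\pi}{2},\frac{2\pi}{3},\alpha}^{2}$, a subgroup of $SO^0(1,2)$ generated by two elliptic rotations of angle $\alpha<\pi/3$ whose axes lie at positive distance $d$ (with $\cosh d=\frac{x^2-1/2}{1-x^2}>1$); such a group is non-elementary, hence contains a nonabelian free subgroup, and when $\alpha=2\pi/n$, $n\ge7$, it is a cocompact Fuchsian group to which the Mess-type theorem applies directly.

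To turn ``not properly discontinuous for the special values of $x$'' into ``no crystallographic quotient for any $\rho$'' uniformly is the step I expect to be the crux, and here I would invoke the Fried--Goldman/Mess resolution of the Auslander conjecture in dimension three (\cite{FG1983}, \cite{Mess1990}, \cite{GM2000}): every three-dimensional affine crystallographic group is virtually polycyclic. If $\Gamma=\rho(G(3_1))\subset\mathcal{L}(E^{1,2})$ were crystallographic it would be virtually polycyclic, and so would its quotient $\widehat{\rho}(G(3_1))$ obtained by forgetting translations. But in every one of Cases 3, 4, 5 this linear image is non-elementary and contains a nonabelian free subgroup — explicitly the finite-index subgroup $\langle A^2,B^{-2}\rangle$ for $x\ge1$ by Theorem \ref{tsubgrupolibre}, and the non-elementary group above for $\sqrt{3}/2<x<1$ — a contradiction. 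This single structural input removes the need for a case-by-case discreteness analysis of the non-orbifold values of $x$ in Case 3, where $\widehat{\rho}_x(G(3_1))$ need not even be discrete in $SO^0(1,2)$.

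Finally I would treat the reducible boundary $|x|=\sqrt{3}/2$ separately. There equation \eqref{epoly2} forces the shift parameter $s=0$, so no proper affine deformation into $\mathcal{L}(E^{1,2})$ exists; the only crystallographic group attached to this value is the Euclidean group $P6_1\subset\mathcal{E}(E^3)$ of Theorem \ref{tposiblesH}, which does not lie in $\mathcal{L}(E^{1,2})$. A trivial linear part can likewise be excluded, since the abelianization of $G(3_1)$ is $\mathbb{Z}$, so a representation with trivial linear part would produce at most a cyclic translation group, which is never cocompact. Collecting these cases shows that no affine crystallographic group in $\mathcal{L}(E^{1,2})$ is a quotient of $G(3_1)$, as claimed.
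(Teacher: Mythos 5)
The paper offers no separate argument for this corollary: it is meant to follow by combining the three theorems that precede it (the Mess argument for $x=\cos \frac{\pi }{n}$, $n\geq 7$, the Margulis-invariant computation for $x>1$, and the fixed-line argument for $x=1$). Your treatment of the representations with irreducible linear part therefore coincides with the paper's, with one genuine improvement: the paper's Mess-type theorem only covers the parameters where the linear holonomy is discrete and cocompact, and your appeal to Fried--Goldman's solution of the three-dimensional Auslander conjecture (crystallographic $\Rightarrow $ virtually polycyclic, while the linear quotient contains a nonabelian free subgroup) disposes of the non-orbifold parameters of Case 3 uniformly -- a case the paper passes over in silence -- and would in fact also replace the Margulis computations for $x\geq 1$.

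The fatal problem is your final paragraph, the reducible case. Your assertion that the group $P6_{1}$ of Theorem \ref{tposiblesH} ``does not lie in $\mathcal{L}(E^{1,2})$'' is false. The point group of $P6_{1}$ is cyclic of order six, consisting of rotations about a single fixed direction, and a rotation about a fixed axis preserves the Lorentz form in which that axis is timelike just as well as it preserves the Euclidean form. Position the configuration of Theorem \ref{tposiblesH} so that the common direction of the screw axes is the timelike $Z$-axis of $E^{1,2}$: every element of the group generated by $\mathbf{A}$ and $\mathbf{B}$ of \eqref{eroarob} then has linear part an elliptic rotation about a timelike axis, hence lies in $\mathcal{L}(E^{1,2})$ (the generators even lift to $H_{0}\rtimes SL(2,\mathbb{R})$ with the relation $aba=bab$ holding exactly, since $1-c(A)+c(A)^{2}$ annihilates the spacelike plane orthogonal to the axis when $c(A)$ is a rotation by $\pi /3$). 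This copy of $P6_{1}$ acts properly discontinuously and cocompactly -- these are affine properties, independent of which invariant metric one chooses -- and by the paper's own theorems it is a quotient of $G(3_{1})$, via a representation that does \emph{not} factor through $C_{2}\ast C_{3}$ because $\rho (C)=\mathbf{A}^{6}$ is a nontrivial translation. Your other remark about this boundary value, that \eqref{epoly2} forces $s=0$ at $x=\pm \sqrt{3}/2$, concerns only affine deformations of the almost-irreducible Case 2 representation (which is complex in any event); it says nothing about affine representations whose linear part is the reducible real representation $\widehat{\rho }(a)=\widehat{\rho }(b)=A$ with $A$ elliptic of angle $\pi /3$, and those are exactly the ones that generate $P6_{1}$. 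Note that your opening reduction (``it suffices to show no $\rho $ has properly discontinuous image'') already breaks down here, since this image is properly discontinuous and cocompact.

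So the proof cannot be completed as written, and the defect is not one you can patch: under the literal Fried--Goldman definition quoted in the paper, the reducible case produces an affine crystallographic subgroup of $\mathcal{L}(E^{1,2})$ which is a quotient of $G(3_{1})$. The statement is sound only under the restricted reading that the preceding theorems actually support, namely for the representations classified in Theorem \ref{tgenlorentz} (irreducible, i.e. ``independent,'' linear part), equivalently for quotients factoring through $C_{2}\ast C_{3}$. Your first three paragraphs do prove that restricted statement, and indeed the paper's final corollary is unaffected by the example above, since $P6_{1}$ is torsion-free and hence is not generated by isometries $\mu ,\nu $ with $\mu ^{2}=\nu ^{3}=1$.
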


\begin{corollary}
There is no affine crystallographic group in $\mathcal{L}(E^{1,2})$
generated by two isometries $\mu $ and $\nu $ such that $\mu ^{2}=\nu ^{3}=1.
$
\end{corollary}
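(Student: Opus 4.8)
The plan is to reduce the statement to Corollary \ref{cor20}, which already rules out affine crystallographic \emph{quotients} of $G(3_1)$ in $\mathcal{L}(E^{1,2})$. The whole content of the argument is that a group of the stated form is automatically such a quotient, so that no new geometric work (Margulis invariants, Mess's theorem) is needed beyond what is already packaged in the previous results.

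First I would dispose of the degenerate possibilities. If one of the generators is trivial, or if $\mu$ and $\nu$ generate a cyclic group, then $S=\langle\mu,\nu\rangle$ is cyclic. But an affine crystallographic group in $\mathcal{L}(E^{1,2})$ acts properly discontinuously with compact quotient on the $3$-dimensional space $E^{1,2}$, hence contains a free abelian subgroup of rank $3$ and in particular is not cyclic. Therefore, whenever $S$ is crystallographic we may assume $\mu\neq 1$, $\nu\neq 1$, and that $S$ is non-cyclic, which puts us exactly in the hypotheses of Corollary \ref{cor1}.

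Next I would apply Corollary \ref{cor1} with $H=M(2,\mathbb{R})$, so that $A(H)=\mathcal{L}(E^{1,2})$: the non-cyclic subgroup $S$ generated by $\mu,\nu$ with $\mu^2=\nu^3=1$ is generated by two conjugate elements, and it is the image of a homomorphism $\lambda:C_2\ast C_3=|F,D;F^3=D^2=1|\longrightarrow\mathcal{L}(E^{1,2})$ with $D\mapsto\mu$ and $F\mapsto\nu$. By the remark following the factorization theorem, every such $\lambda$ induces a representation $\rho=\lambda\circ q:G(3_1)\longrightarrow\mathcal{L}(E^{1,2})$, where $q:G(3_1)\to C_2\ast C_3$ is the canonical quotient, and $\rho$ has the same image $S$. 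Thus $S$ is realized inside $\mathcal{L}(E^{1,2})$ as a quotient of the trefoil knot group $G(3_1)$.

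Finally, if $S$ were affine crystallographic it would be an affine crystallographic group in $\mathcal{L}(E^{1,2})$ that is a quotient of $G(3_1)$, contradicting Corollary \ref{cor20}; hence no such crystallographic $S$ can exist. I expect no genuine obstacle in this step: the only point requiring minor care is the exclusion of the cyclic case treated above, after which the result follows by simply recognizing $S$ as a quotient of $G(3_1)$ through Corollary \ref{cor1}.
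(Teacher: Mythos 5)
Your main reduction is exactly the paper's proof: the paper disposes of this corollary in one line, as a consequence of Corollary \ref{cor1} (such a group is generated by two conjugate elements, hence is a quotient of $G(3_{1})$) and Corollary \ref{cor20} (no quotient of $G(3_{1})$ is affine crystallographic in $\mathcal{L}(E^{1,2})$). Your extra care with the degenerate cases is sensible, but the justification you give there is incorrect: it is \emph{false} that an affine crystallographic group in $\mathcal{L}(E^{1,2})$ must contain a free abelian subgroup of rank $3$. That is a Euclidean (Bieberbach) fact; in the Lorentzian affine setting the correct general statement (Fried--Goldman \cite{FG1983}) is only that such groups are virtually polycyclic, and there are genuine examples containing no $\mathbb{Z}^{3}$, for instance the group $\mathbb{Z}^{2}\rtimes_{A}\mathbb{Z}$ of a hyperbolic torus bundle, realized by a lattice of null translations in a plane on which a hyperbolic element $A\in SO(1,1)\subset SO(1,2)$ acts, extended by a spacelike translation. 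The repair is immediate, though: if $\langle \mu ,\nu \rangle$ is cyclic then, since $\mu$ and $\nu$ are torsion, it is finite (of order dividing $6$), and a finite group can never act on $E^{1,2}\cong \mathbb{R}^{3}$ with compact orbit space; so the cyclic case is excluded for an elementary reason, and the rest of your argument coincides with the paper's.
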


\begin{proof}
This is a consequence of Corollary \ref{cor20} and Corollary \ref{cor1}.
\end{proof}

\end{document}